\title{Connecting Orbits in Cooperative McKean-Vlasov SDEs}
\author[1,3,4]{Chunlin Liu\thanks{chunlinliu@mail.ustc.edu.cn}}
\author[2,3]{Baoyou Qu\thanks{qubaoyou@sdu.edu.cn}}
\author[3]{Jinxiang Yao\thanks{jxyao@mail.ustc.edu.cn, yao.jinxiang@durham.ac.uk}}
\author[3]{Yanpeng Zhi\thanks{yanpeng.zhi@durham.ac.uk}}
\affil[1]{{\small School of Mathematical Sciences, Dalian University of Technology, Dalian, 116024, P.R. China}}
\affil[2]{{\small Research Center for Mathematics and Interdisciplinary Sciences, Shandong University, Qingdao, 266237, P.R. China}}
\affil[3]{{\small Department of Mathematical Sciences, Durham University, DH1 3LE, U.K.}}
\affil[4]{{\small School of Mathematical Sciences, University of Science and Technology of China, Hefei, Anhui, 230026, P.R. China}}
\date{}
\newtheorem{theorem}{Theorem}[section]
\newtheorem{lemma}[theorem]{Lemma}
\newtheorem{proposition}[theorem]{Proposition}
\theoremstyle{definition}
\newtheorem{remark}{Remark}[section]
\newtheorem{definition}{Definition}[section]
\newtheorem{assumption}{Assumption}
\sloppy\allowdisplaybreaks[4]
\def\one{\mathbf 1}
\def\bE{\mathbf E}
\def\N{\mathbb N}
\def\R{\mathbb R}
\def\T{\mathbb T}
\def\Z{\mathbb Z}
\def\BB{\mathcal B}
\def\LL{\mathcal L}
\def\MM{\mathcal M}
\def\PP{\mathcal P}
\def\RR{\mathcal R}
\def\TT{\mathcal T}
\def\WW{\mathcal W}
\def\xa{\alpha}
\def\xb{\beta}
\def\xg{\gamma}
\def\xG{\Gamma}
\def\xd{\delta}
\def\xe{\varepsilon}
\def\xl{\lambda}
\def\xk{\kappa}
\def\xs{\sigma}
\def\as{\mathrm{a.s.}}
\def\conv{\mathrm{Conv}}
\def\d{\mathop{}\!\mathrm{d}}
\def\e{\mathrm e}
\def\str#1{#1^{\ast}}
\def\abs#1{\left\lvert #1 \right\rvert}
\def\norm#1{\left\lVert #1 \right\rVert}
\def\inp#1#2{\left\langle #1,#2 \right\rangle}
\def\8{\infty}
\def\no#1{\mathrel{\phantom{#1}}}
\def\mutd{\tilde{\mu}}
\def\goto{\xrightarrow}
\renewcommand{\glossarysection}[2][]{}
  \renewcommand*{\glsgroupheading}[1]{}%
\begin{document}

\maketitle

\begin{abstract}
In this work we extend the framework of monotone dynamical systems to a broad and important class of stochastic equations, namely cooperative McKean–Vlasov SDEs 
with multiplicative noise. 
Under a locally dissipative assumption, our main theorem establishes the existence of multiple order-related invariant measures in the the Wasserstein space  together with monotone connecting orbits (heteroclinic orbits) between them, with respect to the stochastic order. 
The presence of such connecting orbits also reveals the unstable nature of those invariant measures appearing as their backward limits, a dynamical feature that has remained largely unexplored in stochastic equations.
The framework applies to a wide range of classical models, including granular media equations in double-well and multi-well confining potentials with quadratic interaction, perturbed double-well landscapes, and interacting multi-species population models. 
Our method is based on building a monotone dynamical system that preserves the stochastic order, achieved through a cone compatible with this order and an extension of the classical Dancer–Hess connecting orbit theorem.

\medskip

\noindent
{\bf MSC2020 subject classifications:} Primary 37C65, 60E15; secondary 60H10, 60B10

\noindent
{\bf Keywords:} Monotone dynamical systems, connecting orbits, unstable dynamics,  cooperative McKean-Vlasov SDEs,  stochastic order

\end{abstract}

{
\hypersetup{linkcolor=black}
\tableofcontents
}

\section{Introduction}

Monotone dynamical systems—also known as order-preserving systems—are those that satisfy a comparison principle with respect to a closed partial order on the underlying state space.
The theory of monotone dynamical systems was established through the pioneering contributions of Hirsch \cite{Hirsch84,Hirsch88} and Matano \cite{H79,H86}. 
In the decades that followed, it has developed into a widely applicable framework for models ranging from ordinary, functional, and partial differential equations to discrete-time systems.
We refer to \cite{HS05,ShenYi98,Smith95,Smith17,Z17} and references therein for details.
Stochastic extensions of monotone dynamical systems have also been studied in the pathwise (trajectory-based) framework of random dynamical systems, with applications to the long-time behavior of random and stochastic differential equations; see, for instance, \cite{Chue02,ChueshovScheutzow2004,Gess17} and references therein.

In this work we extend the monotone dynamical systems framework to encompass a broad and important class of stochastic equations, namely McKean–Vlasov SDEs with multiplicative noise.
We consider the equation on $\R^d$,
\begin{equation}\label{eq:mvsystem}
\d X_t=b(X_t,\LL(X_t))\d t+\xs(X_t)\d W_t,
\end{equation}
where $\{W_t\}_{t\geq0}$ is an $l$-dimensional standard Brownian motion, $\LL(X_t)$ is the law of the random variable $X_t$.
Such distribution-dependent stochastic differential equations trace back to McKean’s seminal work \cite{McKean1966} and now arise in a wide range of applications, including physics, biology, network dynamics, and control theory; see Carmona–Delarue \cite{Carmona-Delarue2018i,Carmona-Delarue2018ii} for an overview.

Due to their distributional dependence, McKean–Vlasov SDEs fail to generate a random dynamical system in the pathwise sense. 
In the analysis of the distributional evolution, and in particular of invariant measures, a substantial literature has been developed.
An invariant measure here refers to a steady state of the law evolution semigroup $\{P_t^*\}_{t\ge 0}$, where $P_t^*\mu$ denotes the distribution of the solution at time $t$ starting from the initial law $\mu$. 
For work on the existence of invariant measures, as well as the possible uniqueness and associated global convergence, we refer to  
\cite{Bao2022,Carrillo-McCann-Villani2003,Carrillo-McCann-Villani2006,Wang18}  and references therein.

A key distinction of McKean–Vlasov SDEs with non-degenerate noise, compared with usual SDEs, lies in the possibility of non-unique invariant measures.
By a usual SDE, we mean its coefficients do not depend on laws of solutions,
\begin{equation}\label{eq:usual-sde}
\d X_t=b(X_t)\d t+\xs(X_t)\d W_t.
\end{equation}
If the diffusion term $\xs$ is non-degenerate, no matter how small it is, one often expects a usual SDE has only one invariant measure.
When allowing $b,\xs$ to depend on laws of solutions, multiple invariant measures survive if the noise is not too strong.
Dawson \cite{Dawson1983} and Tugaut \cite{Tugaut2014} present phase transitions on the number of invariant measures for granular media equations in double-well landscapes when $\xs$ varies, and Alecio \cite{Alecio2023} steps further to the multi-well landscapes.
Carrillo-Gvalani-Pavliotis-Schlichting \cite{Carrillo-Gvalani-Pavliotis-Schlichting2020} and Delgadino-Gvalani-Pavliotis \cite{Delgadino-Gvalani-Pavliotis2021} also prove there are phase transitions on the number of invariant measures for weakly interacting diffusion processes on tori.

When multiple invariant measures coexist, attention naturally turns to understanding their stability and the asymptotic behavior of solutions near each of them.
For granular media equations with a double-well confining potential, where noise is sufficiently small, Tugaut \cite{Tugaut2013} demonstrates that the solution with initial conditions having finite entropy converges to one of these three invariant measures. 
The study of local convergence has been advanced by various approaches, including
Zhang's method in linearizing the nonlinear Markov semigroup \cite{Zhang2025}, 
Tugaut’s use of the WJ-inequality \cite{Tugaut2023}, Monmarché-Reygner’s application of a local log-Sobolev argument \cite{Monmarche-Reygner2024}, Cormier’s study through Lions derivatives \cite{Cormier2024}, and Tamura's early work utilizing the free energy function \cite{Tamura1984,Tamura1987}.

From a broader dynamical perspective, the coexistence of multiple steady states naturally brings into focus the notion of a connecting (heteroclinic) orbit, which plays a classically central role in the analysis of bifurcation phenomena and chaotic dynamics (see, e.g., Chow and Hale \cite{Chow82}).
This term refers to 
an entire orbit which joins two  equilibria in the phase space. 
It has been found in significant mathematical models in various areas, such as in biological, chemical, fluid mechanics models (see e.g., Balmforth \cite{Balmforth95}, May-Leonard \cite{MayLeonard75}). 

Under cooperative and locally dissipative assumptions, our main result Theorem \ref{thm:existence-unstable} shows the existence of order-related invariant measures, together with monotone connecting orbits between them (with respect to the stochastic order), for a broad class of McKean–Vlasov SDEs.
It plays a crucial role in revealing the dynamical transitions (how systems evolve) between different steady states in such equations, along with the stochastic order.
At the same time, the existence of connecting orbits provides a more precise characterization of the unstable nature of invariant measures appearing as backward limits.
Moreover, the presence of connecting orbits also highlights the complexity of the dynamics of McKean-Vlasov SDEs and provides an avenue for understanding their global dynamics.

The abstract result applies directly to a wide range of classical models with well-established applications.
As an illustration, we consider one-dimensional granular media equations of the form
\[
\d X_t
= -\nabla V(X_t)\,\d t
  - (\nabla W * \mathcal L(X_t))(X_t)\,\d t
  + \sigma(X_t)\,\d W_t,
\]
where $V:\R^d\to\R$ is a confining potential and $W:\R^d\to\R$ is an interaction potential.
A typical and widely used choice is the attractive quadratic interaction
\[
W(x)=\frac{\beta}{2}|x|^2,\qquad \beta>0, 
\]
which arise in diverse areas such as muscle contraction
\cite{Kometani75}, chemical kinetics \cite{Horsthemke77},
statistical physics \cite{Haken77}, and large economic systems \cite{Aoki1980}.
In this setting, we consider several representative choices of the confining potential $V$, 
including double-well (Theorem \ref{thm:double-well}) and multi-well landscapes (Theorem \ref{thm:multi-well}), as well as perturbed double-well case (Theorem \ref{thm:double-well-perturbation}).
In the higher-dimensional case, we consider a two-dimensional  equation (Theorem \ref{thm:high-dimension}) that 
describes the motions of an interacting two-species population under the influence
of external forces, intra-species and inter-species interacting forces, and stochastic noises (see, for instance \cite{DuongHongTugaut20,Duong-Pavliotis-Tugaut2025} and references therein).
\begin{equation*}
\begin{cases}
    \d X_t=\left[X_t-X^3_t-\tau\alpha\left(X_t-\bE X_t\right)-(1-\tau)\beta\left(X_t-\bE Y_t\right)\right]\d t+\xs\d W_1(t),\\
\,\d Y_t\,=\left[Y_t-Y^3_t-\tau\alpha\left(Y_t-\bE X_t\right)-(1-\tau)\beta\left(Y_t-\bE Y_t\right)\right]\d t+\xs\d W_2(t).
\end{cases}
\end{equation*}
For all these examples, we provide explicit parameter regimes under which the assumptions of our main theorems
are satisfied, together with the corresponding phase diagrams.

Our approach begins with the fact that the law evolution semigroup $\{P_t^*\}_{t\ge 0}$ generated by the
McKean–Vlasov SDE is order-preserving on the $2$-Wasserstein space with respect to the stochastic order (Proposition \ref{prop:continuity-Pt}), provided the cooperative condition
\[
b_i(x,\mu)\leq b_i(y,\nu),\text{ for $x_i=y_i$, $x_j\leq y_j$, $j\neq i$, and $\mu\leq_{\mathrm{st}}\nu$}.
\]
Here the stochastic order ``$\leq_{\mathrm{st}}$'' defined by
\[
\mu\leq_{\mathrm{st}}\nu\text{ if and only if }\int_{\R^d}f\d\mu\leq\int_{\R^d}f\d\nu\ \ \text{for all bounded increasing functions $f\colon\R^d\to\R$},
\]
where “increasing’’ refers to the coordinate-wise order on \(\mathbb R^d\).
The notion of order-preserving  has already appeared in the study of uniqueness of invariant measures, ergodicity, and synchronization phenomena for certain classes of Markov processes; see, for instance, \cite{Butkovsky2020,RobertRichard1996,RobertTweedie2000}. 
Here, we are instead concerned with the dynamics of McKean–Vlasov SDEs in the presence of multiple invariant measures, a setting in which these frameworks do not apply. 
Our method is to bring the tools of monotone dynamical systems into the study of the dynamics of McKean–Vlasov SDEs.
However, the classical theory of monotone dynamical systems—while highly effective for analyzing bistable and multistable behavior—rests on structural assumptions that are not available in our setting.
Among its central ingredients is the Dancer-Hess connecting orbit theorem, which stands as a fundamental result of the theory: much of the core theory for monotone systems ultimately rests on it.
The theorem asserts that within an order interval enclosed by two order-related fixed points and containing no additional fixed points, there necessarily exists a monotone (either increasing or decreasing) connecting orbit between them.
Yet its formulation requires at least strictly monotone systems on Banach spaces endowed with a partial order induced by a closed convex cone—assumptions that fail in the context of the law evolution of McKean–Vlasov SDEs.

To address these structural obstacles, we first extend the state space $\PP_2(\R^d)$ to the vector space $\MM_1(\R^d)$ of finite signed Borel measures with finite first moments, equipped with a Kantorovich-type norm (Lemma \ref{lem:m1-norm-space}).
At the same time, a cone in $\MM_1(\R^d)$ compatible with the stochastic order is introduced, \[ C:=\bigg\{\mu\in\MM_1(\R^d): \int_{\R^d}f\d\mu\geq0\text{ for all non-negative 1-Lipschitz increasing functions $f$}\bigg\}, \] so that the induced partial order extends the stochastic order on $\PP_2(\R^d)$ (Lemma \ref{lem:m1-cone}).
Correspondingly, the classical Dancer–Hess connecting orbit theorem is extended to the setting of general monotone semiflows on convex compact subsets enclosed by two order-related equilibria in Hausdorff locally convex topological vector spaces (Theorem \ref{T:semiflow-tri}).
To apply this extended framework to cooperative McKean–Vlasov SDEs, a further key step is to identify compact order intervals enclosed by order-related invariant measures. 
This is achieved by analyzing the continuity, compactness, and monotonicity properties of the measure-iterating map 
$\Psi$  (Proposition \ref{P:Psi-Cts-Cpt-Monotone}).

This paper is organised as follows.
In Section \ref{Section:Order-Cone}, we introduce the stochastic order and consider its interplay with Wasserstein metrics.
In Section \ref{Section:Generation-MDS}, we prove that, under a cooperative condition, monotone semiflows are generated on Wasserstein spaces.
In Section \ref{Section:Results-MDS},  we extend the classical connecting orbit theorem.
Section \ref{Section:Order-Related-Shrinking} is concerned with the existence of order-related invariant measures and their shrinking neighbourhoods under the  locally dissipative condition.
In Section \ref{Section:Proof-Main-Results}, we prove our main results, and Appendix \ref{Appendix} provides a detailed proof of connecting orbit theorem for monotone mappings on an ordered topological vector space.

\subsection{Notations}

\printunsrtglossary[type=symbols,style=mylong]

\subsection{Main results}

In this subsection, we state our assumptions, some necessary definitions and main results.

\begin{assumption}\label{asp:lipschitz}
(i) The function $b\colon\R^d\times\PP_2(\R^d)\to\R^d$ is continuous and one-sided Lipschitz, i.e., there exists $K>0$ such that, for all $x,y\in\R^d$, $\mu,\nu\in\PP_2(\R^d)$,
\[
\inp{x-y}{b(x,\mu)-b(y,\nu)}\leq K\abs{x-y}^2+K\abs{x-y}\WW_2(\mu,\nu).
\]
(ii) The function $\xs\colon\R^d\to\R^{d\times l}$ is Lipschitz continuous, namely, there exists $K>0$ such that, for all $x,y\in\R^d$,
\[
\abs{\xs(x)-\xs(y)}^2\leq K\abs{x-y}^2.
\]
\end{assumption}

\begin{assumption}\label{asp:cooperation}
(i) The function $b\colon\R^d\times\PP_2(\R^d)\to\R^d$ satisfies,
\[
b_i(x,\mu)\leq b_i(y,\nu),\text{ for $x_i=y_i$, $x_j\leq y_j$, $j\neq i$, and $\mu\leq_{\mathrm{st}}\nu$}.
\]
(ii) The function $\xs\colon\R^d\to\R^{d\times l}$ satisfies, for all $i=1,2,\ldots,d$, and for all $x,y\in\R^d$,
\[
\sum_{k=1}^l\abs{\xs_{ik}(x)-\xs_{ik}(y)}^2\leq K\abs{x_i-y_i}^2.
\]
\end{assumption}

Assumption \ref{asp:cooperation} is the cooperative condition, and it follows from (ii) that the dependence of
$\xs_{ik}(x)$ on $x$ is only via $x_i$.

\begin{assumption}\label{asp:dissipative-growth-nondegeneracy}
(i) The function $b\colon\R^d\times\PP_2(\R^d)\to\R^d$ is weakly dissipative and has polynomial growth, that is, there exist $\xa>\xb>0$, $\xg>0$, such that, for all $x\in\R^d$, $\mu\in\PP_2(\R^d)$,
\[
\inp{x}{b(x,\mu)}\leq-\xa\abs{x}^2+\xb\norm{\mu}_2^2+\xg,
\]
and there exist $K>0$, $\xk>0$ such that, for all $x\in\R^d$, $\mu\in\PP_2(\R^d)$,
\[
\abs{b(x,\mu)}\leq K(1+|x|^{\xk}+\norm{\mu}_2^{\xk}).
\]
(ii) The function $\xs\colon\R^d\to\R^{d\times l}$ is non-degenerate, i.e., there exist $0<\underline{\xs}<\overline{\xs}$, such that, for all $x\in\R^d$,
\[
\underline{\xs}I_d\leq \xs\xs^{\top}(x)\leq \overline{\xs} I_d,
\]
where $I_d$ is the identity matrix on $\R^d$.
\end{assumption}

\begin{definition}\label{def:connecting-orbits-measure}
For a semigroup $\str P_t$ on $\PP_2(\R^d)$, a path $\{\mu_t\}_{t\in\mathbb{R}}\subset\PP_2(\R^d)$ is called an increasing (decreasing) connecting orbit from an invariant measure $\nu$ to another invariant measure $\mu$ if
\begin{align*}
&\str P_t\mu_s=\mu_{t+s}\text{ for any $t\geq0$, $s\in\mathbb{R}$};\\
&\mu_s\leq_{\mathrm{st}}\mu_t\ \ (\mu_s\geq_{\mathrm{st}}\mu_t)\text{ for all $s\leq t$};\\
&\WW_2(\mu_t,\nu)\to0\text{ as $t\to-\8$ and }\WW_2(\mu_t,\mu)\to0\text{ as $t\to\8$}.
\end{align*}
\end{definition}

\begin{definition}\label{def:locally-dissipative}
The equation \eqref{eq:mvsystem} is called locally dissipative at $a\in\R^d$ with configuration $(r_a,g_a)$, if
\begin{enumerate}[label=(\roman*)]
\item the function $g_a\colon\R_+\times\R_+\to\R$ satisfies
\begin{equation*}
2\langle x,b(x+a,\nu)\rangle+|\sigma(x+a)|_2^2\leq -g_a(|x|^2,\|\nu^a\|_2^2),\quad\text{for all $x\in\R^d$ and $\nu\in\PP_2(\R^d)$},
\end{equation*}
where $\nu^{a}$ is the shift probability of $\nu$ by $a$, i.e.,  $\nu^{a}(B):=\nu(B+a)$ for any $B\in\BB(\R^d)$;
\item $r_a>0$ and $g_a$ satisfy
\begin{equation*}
\begin{split}
&g_a(\cdot,r_a^2) \text{ is continuous and convex};\\
&\inf_{0\leq w\leq r_a^2}g_a(z,w)=g_a(z,r_a^2);\\
& g_a(z,r_a^2)>0, \ \text{ for all } z\geq r_a^2.
\end{split}
\end{equation*}
\end{enumerate}
\end{definition}

\begin{theorem}\label{thm:existence-unstable}
Suppose that Assumption \ref{asp:lipschitz}, \ref{asp:cooperation}, \ref{asp:dissipative-growth-nondegeneracy} hold, and there exists $\{a_i\}_{i=1}^n\subset \R^d$ for some $n\geq 2$ such that the equation \eqref{eq:mvsystem} is locally dissipative at $a_i$  with configurations $(r_{a_i},g_{a_i})$ for all $1\leq i\leq n$.
If $a_1<a_2<\cdots<a_n$ and
\[
r_{a_i}+r_{a_{i+1}}\leq \abs{a_{i+1}-a_i}, \ \text{ for all } \ 1\leq i\leq n-1,
\]
then the equation \eqref{eq:mvsystem} has at least $(2n-1)$ order-related invariant measures in $\PP_{\8}(\R^d)$,
\[
\mu_1<_{\mathrm{st}}\nu_1<_{\mathrm{st}}\mu_2<_{\mathrm{st}}\cdots<_{\mathrm{st}}\mu_i<_{\mathrm{st}}\nu_i<_{\mathrm{st}}\mu_{i+1}<_{\mathrm{st}}\cdots<_{\mathrm{st}}\mu_{n-1}<_{\mathrm{st}}\nu_{n-1}<_{\mathrm{st}}\mu_n,
\]
satisfying
\begin{enumerate}[label=\textnormal{(\roman*)}]
\item There hold
\begin{equation}\label{eq:nu in ball}
\mu_i\in B_{\PP_2}(\xd_{a_i},r_{a_i}), \ \text{ for all } i=1,2,\dots,n,
\end{equation}
and
\begin{equation}\label{eq:nu notin ball}
\nu_{i}\notin \overline{B_{\PP_2}(\xd_{a_i},r_{a_i})}\bigcup\overline{B_{\PP_2}(\xd_{a_{i+1}},r_{a_{i+1}})}, \ \text{ for all } i=1,2,\dots,n-1.
\end{equation}
Moreover, $\str P_tB_{\PP_2}(\xd_{a_i},r_{a_i})\subset B_{\PP_2}(\xd_{a_i},r_{a_i})$ for each $i=1,2,\dots,n$ and $t\geq 0$;
\item There exists a decreasing connecting orbit in the order interval $[\mu_i,\nu_i]_{\PP_2}$ and an increasing connecting orbit in the order interval $[\nu_i,\mu_{i+1}]_{\PP_2}$, for each $\ 1\leq i\leq n-1$.
\item Assume further there are exactly $(2n-1)$ invariant measures, then there exists a decreasing connecting orbit $\{\mu_{i,i}(t)\}_{t\in\R}$ from $\nu_i$ to $\mu_i$ and an increasing connecting orbit $\{\mu_{i,i+1}(t)\}_{t\in\R}$ from $\nu_i$ to $\mu_{i+1}$, for each $\ 1\leq i\leq n-1$.
\end{enumerate}
\end{theorem}

\begin{remark}
In a subsequent work, the local dissipativity condition and the conditions on  the number of invariant measures for general one-dimensional granular media equations will be investigated, and, building on the present theorem, the associated basins of attraction will be analyzed.
\end{remark}

As an illustration, we apply the abstract result to one-dimensional granular media equations with double-well, multi-well, and perturbed double-well confining potentials, as well as to a two-dimensional model describing the dynamics of an interacting two-species population under external forces, intra-species and inter-species interacting forces, and stochastic noises.

\begin{theorem}[Double-well landscapes, Figure \ref{fig:double-well}]\label{thm:double-well}
Consider the following one-dimensional McKean-Vlasov SDE,
\begin{equation}\label{eq:double-well}
\d X_t=-\left[X_t(X_t-1)(X_t+1)+\xb\left(X_t-\bE X_t\right)\right]\d t+\xs\d W_t.
\end{equation}
If the parameters $\xb$ and $\xs$ satisfy
\[
\xb\geq \frac{27(9+\sqrt{17})}{128},\quad 0<\xs^2<\frac{51\sqrt{17}-107}{256},
\]
then
\begin{enumerate}[label=\textnormal{(\roman*)}]
\item there are exactly three invariant measures, $\mu_{-1}, \mu_0, \mu_1\in\PP_{\8}(\R)$ with $\mu_{-1}<_{\mathrm{st}}\mu_0<_{\mathrm{st}}\mu_1$;
\item there is a decreasing connecting orbit $\{\mu_{0,-1}(t)\}_{t\in\R}$ from $\mu_0$ to $\mu_{-1}$ and an increasing connecting orbit $\{\mu_{0,1}(t)\}_{t\in\R}$ from $\mu_0$ to $\mu_1$.
\end{enumerate}
\end{theorem}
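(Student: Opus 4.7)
The plan is to apply Theorem \ref{thm:existence-unstable} and Theorem \ref{thm:connecting-orbits} with $n=2$ and $a_1=-1$, $a_2=+1$, the two wells of the quartic potential $V(x)=\tfrac14 x^4-\tfrac12 x^2$. Writing $b(x,\mu)=-x^3+(1-\xb)x+\xb\bar\mu$ with $\bar\mu:=\int_\R y\,\d\mu(y)$, the verification of Assumptions \ref{asp:lipschitz}--\ref{asp:dissipative} is largely routine. One-sided Lipschitzness follows from the monotonicity of $-x^3$ together with the Kantorovich bound $|\bar\mu-\bar\nu|\le\WW_1(\mu,\nu)\le\WW_2(\mu,\nu)$; the cooperative condition in the measure variable reduces to $\xb(\bar\mu-\bar\nu)\le 0$ whenever $\mu\le_{\mathrm{st}}\nu$, which holds since $\xb>0$, while the coordinate cooperative condition is vacuous in one dimension. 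Polynomial growth and non-degeneracy are immediate, and weak dissipativity follows by absorbing $\xb|x|\bar\mu$ via Young's inequality into the form $-\xa x^2+\xb'\norm{\mu}_2^2+\xg$ with some $\xa>\xb'>0$.

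The quantitative heart of the argument is verifying local dissipativity at each $a\in\{-1,+1\}$. After shifting $\xtd=X-a$, I would expand $2\langle x,b^a(x,\nu)\rangle+|\xs^a(x,\nu)|^2$ into a quartic polynomial in $x$ coupled to $\bar\nu$ and $\norm{\nu}_2^2$, then repackage it via Young's inequality into a function $g_a(|x|^2,\norm{\nu}_2^2)$ which is continuous and convex in the first argument and non-increasing in the second, so that the infimum in the definition of local dissipativity is attained at $\bar r_a^2$. The parameters $r_a$ and $\bar r_a$ must satisfy two competing demands: $g_a(z,\bar r_a^2)>0$ for all $z\ge r_a^2$, and the separation bound $r_{-1}^2+r_1^2\le \tfrac12|{-}1-1|^2=2$. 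By the $a\mapsto -a$ symmetry of the drift this reduces to $r_1^2\le 1$, and optimising over the free parameters in the Young splittings produces the explicit thresholds $\xb>27(9+\sqrt{17})/128$ and $\sup_x\xs(x)^2<(51\sqrt{17}-107)/256$ stated in the theorem. Extracting these specific algebraic constants as the discriminant of the resulting quartic inequality in $z$ is the main calculational obstacle.

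With the hypotheses of Theorem \ref{thm:existence-unstable} verified, I obtain at least three ordered invariant measures $\mu_{-1}<_{\mathrm{st}}\nu_1<_{\mathrm{st}}\mu_1$ in $\PP_{\8}(\R)$, with $\nu_1$ unstable and $\mu_{\pm 1}\in B_{\PP_2}(\xd_{\pm 1},1)$; relabelling $\nu_1=:\mu_0$ gives part (i) up to the ``exactly three'' count. The remaining step, needed to invoke Theorem \ref{thm:connecting-orbits}, is the complementary upper bound on the number of invariant measures. The plan here is to reduce the count to fixed points of the mean-iteration map $T\colon\R\to\R$, where $T(m)$ is the mean of the unique invariant measure $\pi_m$ of the frozen SDE $\d X_t=-[X_t^3-(1-\xb)X_t-\xb m]\,\d t+\xs(X_t)\,\d W_t$; fixed points of $T$ are in bijection with the invariant measures of \eqref{eq:double-well} via $m\mapsto\pi_m$. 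Using the explicit Fokker--Planck formula for the density of $\pi_m$ together with a convexity/concavity analysis of $T$ on $(-\8,0]$ and $[0,\8)$, and exploiting the quantitative bounds on $\xb$ and $\xs^2$ to control the slope $T'(m)$ at the origin and at infinity, one extracts exactly three fixed points. Theorem \ref{thm:connecting-orbits} then supplies the decreasing connecting orbit from $\mu_0$ to $\mu_{-1}$ and the increasing connecting orbit from $\mu_0$ to $\mu_1$, completing part (ii).
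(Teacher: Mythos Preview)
Your approach matches the paper's: verify Assumptions \ref{asp:lipschitz}--\ref{asp:dissipative}, establish local dissipativity at $\pm 1$ (done in Proposition \ref{Prop: cubic symmetric}), then invoke Theorems \ref{thm:existence-unstable} and \ref{thm:connecting-orbits}. One clarification on the constants: in the paper the $\xb$ threshold arises not from a Young splitting but from requiring $g_1(\cdot,\bar r_1^2)$ to be \emph{convex}, which forces $\xb\ge 27/(16\bar r_1)$; the $\xs$ threshold comes from $g_1(r_1^2,r_1^2)>0$, and optimising over $r_1\in(0,1)$ at $r_1=(9-\sqrt{17})/8$ yields the stated numbers. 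The only substantive difference is the ``exactly three'' upper bound: the paper simply cites Alecio \cite[Proposition 2.5]{Alecio2023}, whereas you propose to carry out the mean-iteration/Fokker--Planck analysis directly---this is precisely Alecio's method, so you are reproducing the cited result rather than taking a different route.
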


\begin{figure}[hbtp]
\centering
\includegraphics{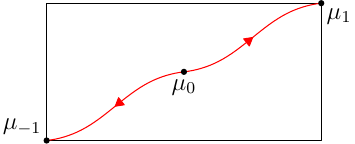}
\caption{Phase diagram for double-well landscapes}
\label{fig:double-well}
\end{figure}

\begin{remark}
The existence of connecting orbits makes explicit the unstable role played by invariant measures serving as backward limits.
More precisely, there exists a solution evolving along the  connecting orbit whose initial condition can be chosen arbitrarily close to the invariant measure $\mu_0$, yet the trajectory escapes from $\mu_0$ (and eventually converges to another invariant measure $\mu_1$).
Such unstable behavior implies that $\mu_0$ is not \emph{Lyapunov stable}.
Here, an invariant measure $\nu$ is Lyapunov stable means that, for any neighbourhood $U$ of $\nu$, there exists a neighbourhood $V \subset U$ such that
solutions starting from $V$ remain in $U$ for all forward times.
At the same time, $\mu_0$ does not \emph{attract points locally}.
That is, an invariant measure $\nu$ is said to \emph{attract points locally} if all solutions initiated within some neighbourhood of $\nu$ are attracted to it.
This violates all the key conditions for the invariant measure $\mu_0$ to be a possible local attractor (see e.g., Hale \cite{Ha88}).
\end{remark}

\begin{theorem}[Multi-well landscapes]\label{thm:multi-well}
Consider the following one-dimensional McKean-Vlasov SDE,
\begin{equation}\label{eq:multi-well}
\d X_t=-\left[X_t(X_t-1)(X_t+1)(X_t-2)(X_t+2)+\xb\left(X_t-\bE X_t\right)\right]\d t+\xs(X_t)\d W_t.
\end{equation}
If the parameter $\beta$ and the Lipschitz function $\xs\colon\R\to\R$ satisfy
\[
\beta\geq8\sqrt{5+\sqrt{13}},\quad 0<\inf_{x\in\R}\xs^2(x)\leq \sup_{x\in\R}\xs^2(x)<\frac{4(13\sqrt{13}-35)}{27},
\]
then
\begin{enumerate}[label=\textnormal{(\roman*)}]
\item there are five invariant measures, $\mu_{-2}, \mu_{-1}, \mu_0, \mu_1, \mu_2\in\PP_{\8}(\R)$ with $\mu_{-2}<_{\mathrm{st}}\mu_{-1}<_{\mathrm{st}}\mu_0<_{\mathrm{st}}\mu_1<_{\mathrm{st}}\mu_2$;
\item there is a decreasing connecting orbit in the order interval $[\mu_{-2},\mu_{-1}]_{\PP_2}$ and $[\mu_{0},\mu_{1}]_{\PP_2}$, respectively, and an increasing connecting orbit in the order interval $[\mu_{-1},\mu_{0}]_{\PP_2}$ and $[\mu_{1},\mu_{2}]_{\PP_2}$, respectively.
\end{enumerate}
\end{theorem}

\begin{theorem}[Double-well with perturbation]
\label{thm:double-well-perturbation}
Consider the following one-dimensional McKean-Vlasov SDE,
\begin{equation}\label{eq:double-well-perturbation}
\d X_t=-\left[X_t(X_t-1)(X_t+1)+f(X_t)+\xb\left(X_t-\bE X_t\right)\right]\d t+\xs(X_t)\d W_t.
\end{equation}
If the parameter $\xb$ and the Lipschitz functions $\xs,f\colon\R\to\R$ satisfies
\[
\xb\geq \frac{65\sqrt5}{48}, \ \ 0<\inf_{x\in\R}\xs^2(x)\leq \sup_{x\in\R}\xs^2(x)\leq \frac{9}{200}, \ \ 
\sup_{x\in\R}\abs{f(x)}\leq \frac13,
\]
then
\begin{enumerate}[label=\textnormal{(\roman*)}]
\item there are three invariant measures, $\mu_{-1}, \mu_0, \mu_1\in\PP_{\8}(\R)$ with $\mu_{-1}<_{\mathrm{st}}\mu_0<_{\mathrm{st}}\mu_1$;
\item  There is a decreasing connecting orbit in the order interval $[\mu_{-1},\mu_0]_{\PP_2}$ and an increasing connecting orbit in the order interval $[\mu_0,\mu_1]_{\PP_2}$.
\end{enumerate}
\end{theorem}

\begin{theorem}[Multi-species population model]\label{thm:high-dimension}
For a given $0<\tau<1$, consider the following two-dimensional McKean-Vlasov SDE, 
\begin{equation}\label{eq:higher-dimensional}
\begin{cases}
    \d X_t=\left[X_t-X^3_t-\tau\alpha\left(X_t-\bE X_t\right)-(1-\tau)\beta\left(X_t-\bE Y_t\right)\right]\d t+\xs\d W_1(t),\\
\,\d Y_t\,=\left[Y_t-Y^3_t-\tau\alpha\left(Y_t-\bE X_t\right)-(1-\tau)\beta\left(Y_t-\bE Y_t\right)\right]\d t+\xs\d W_2(t).
\end{cases}
\end{equation}
If the parameters $\xa,\xb$ and $\xs$ are positive and satisfy
\begin{equation}\label{eq:parameter-high-dimensional}
    |\tau\xa-(1-\tau)\xb|\leq 1, \quad \ \max\{\tau\xa, (1-\tau)\xb\}\geq \frac{27}{2}, \quad \ 0<\sigma^2<\frac{9}{512},
\end{equation}
then
\begin{enumerate}[label=\textnormal{(\roman*)}]
\item there are exactly three invariant measures, $\mu_{-1}, \mu_0, \mu_1\in\PP_{\8}(\R^2)$ with $\mu_{-1}<_{\mathrm{st}}\mu_0<_{\mathrm{st}}\mu_1$;
\item there is a decreasing connecting orbit $\{\mu_{0,-1}(t)\}_{t\in\R}$ from $\mu_0$ to $\mu_{-1}$ and an increasing connecting orbit $\{\mu_{0,1}(t)\}_{t\in\R}$ from $\mu_0$ to $\mu_1$.
\end{enumerate}
\end{theorem}

\section{Stochastic Order and Compatible Cone}\label{Section:Order-Cone}

In this section, we first summarize some preliminary concepts on general ordered spaces. Then we investigate some key properties of the stochastic order in Wasserstein spaces. Next, we extend Wasserstein spaces into a normed vector space and construct a cone within it to further extend the stochastic order.

\subsection{Properties of stochastic order}\label{Subsection:Properties-Order}
Firstly, we recall some concepts of general partial order relations and general ordered spaces.

\begin{definition}
Let $S$ be a topological space.
A set $\RR\subset S\times S$ is called a partial order relation if the following hold:
\begin{enumerate}[label=(\roman*)]
\item (Reflexivity) $(x,x)\in\RR$ for all $x\in S$;
\item (Antisymmetry) $(x,y)\in\RR$ and $(y,x)\in\RR$ imply $x=y$;
\item (Transitivity) $(x,y)\in\RR$ and $(y,z)\in\RR$ imply $(x,z)\in\RR$.
\end{enumerate}
We write $x\leq y$ if $(x,y)\in\RR$.
Furthermore, $\RR$ is said to be a closed partial order relation if it is a closed subset of $S\times S$.
The space $S$ is called an \emph{ordered space} if it is a topological space together with a closed partial order relation $\RR\subset S\times S$. 
\end{definition}

Let $S$ be a topological space with a closed partial order relation ``$\leq$''. 
If $x\leq y$ and $x\neq y$, we write $x<y$. 
For $x,y\in S$ with $x\leq y$, the \emph{order interval} is defined by $(x,y]_S=\{z\in S:x< z\leq y\}$, $[x,y]_S=\{z\in S:x\leq z\leq y\}$ and similarly we can define $[x,y)_S, (x,y)_S$.
Points $x, y\in S$ are said to be \emph{order-related}, if $x\leq y$ or $y\leq x$ holds.
Given two subsets $A$ and $B$ of $S$, we write $A\leq B$ ($A<B$) when $x\leq y$ ($x<y$) holds for each choice of $x\in A$ and $y\in B$.
The reversed signs $\geq$, $>$  
are used in the usual way. 

A closed partial order relation in a topological vector space can be induced by a cone. To be more precise, let $(V,\mathcal{T})$ be a topological vector space. A \emph{cone} $C$ is a closed convex subset of $V$ such that $\lambda C\subset C$ for all $\lambda>0$ and $C\cap(-C)=\{0\}$. We call $(V,C)$ an ordered topological vector space, as the cone $C$ can induce a closed partial order as follows.  For $x,y\in V$, $x\leq y$ if and only if $y-x\in C$. It follows that $x<y$ if and only if $y-x\in C\backslash\{0\}$. In addition, $[x,y]_V$ is clearly a convex closed subset of $V$.

Next, we recall the definition and properties of the stochastic order; we refer to,
e.g., Kamae-Krengel-O’Brien \cite{Kamae1977}, Lindvall \cite[Chapter IV]{Lindvall1992} for a detailed discussion. 
We first present the coordinate order on $\R^d$. 
That is,
\[
x\leq y\ \ \text{if and only if}\ \ x_i\leq y_i\ \ \text{for all $i\in\{1,2,\dots,d\}$},
\]
and a function $f\colon\R^d\to\R$ is called \emph{increasing} if $f(x)\leq f(y)$ whenever $x\leq y$.
Now we give the definition of the stochastic order.
Let $\PP(\R^d)$ be the set of probability measures on $\R^d$.

\begin{definition}\label{def:stochastic-order}
The stochastic order ``$\leq_{\mathrm{st}}$'' is a partial order on $\PP(\R^d)$, defined by
\[
\mu\leq_{\mathrm{st}}\nu\text{ if and only if }\int_{\R^d}f\d\mu\leq\int_{\R^d}f\d\nu\text{ for all bounded increasing functions $f\colon\R^d\to\R$}.
\]
\end{definition}
For simplicity, we just write $\mu\leq_{\mathrm{st}}\nu$ as $\mu\leq\nu$ hereafter.
Reflexivity and transitivity are obvious, and antisymmetry can be obtained by taking $f=\one_{(x,\8)}$, where $x=(x_i)_{i=1}^d\in\R^d$, $(x,\8)=\prod_{i=1}^d(x_i,\8)$.
So, it is indeed a partial order.
It is also known as stochastic domination.
A basic fact is Strassen's theorem (see e.g., Lindvall \cite[Theorem IV.2.4]{Lindvall1992} or \cite[equation (3)]{Lindvall1999}) that says, $\mu\leq\nu$ if and only if there exist two random variables $X,Y$ such that
\[
\LL(X)=\mu,\quad\LL(Y)=\nu,\quad X\leq Y\ \as,
\]
where $\LL(\cdot)$ represents the law of a random variable.
According to Hiai-Lawson-Lim \cite[Proposition 3.11]{Hiai2018}, test functions $f$ in Definition \ref{def:stochastic-order} can be taken as all bounded continuous increasing functions.
If $\mu$, $\nu$ both have finite first moments, test functions $f$ in Definition \ref{def:stochastic-order} can be taken as all $1$-Lipschitz increasing functions (see Fritz-Perrone \cite[Theorem 4.2.1]{Fritz2020}), where a $1$-Lipschitz function $f$ means $\abs{f(x)-f(y)}\leq\abs{x-y}$ for all $x,y\in\R^d$.
We may confine test functions in these different classes of functions hereafter for convenience.

Let $\PP_p(\R^d)$ be the set of probability measures on $\R^d$ having finite $p$-th moments,
\[
\PP_p(\R^d):=\left\{\mu\in\PP(\R^d): \norm{\mu}_p=\left(\int_{\R^d}\abs x^p\d\mu(x)\right)^{1/p}<\8\right\}.
\]
For $p\in[1,\8)$, equip $\PP_p(\R^d)$ with the $p$-Wasserstein metric defined as below,
\[
\WW_p(\mu,\nu):=\inf\left\{\left(\bE\abs{X-Y}^p\right)^{1/p}:\LL(X)=\mu,\, \LL(Y)=\nu\right\}.
\]
Then $(\PP_p(\R^d),\WW_p)$ is a Polish space (see Villani \cite[Theorem 6.18]{Villani2009}).
We will also use the notation $\mu_n\goto{\WW_p}\mu$ for a sequence $\{\mu_n\}_{n\in\N}\subset\PP_p(\R^d)$ converging to $\mu\in\PP_p(\R^d)$ as $n\to\8$ under the metric $\WW_p$.

In the following, we give some properties of stochastic order that will be used later.
The following lemma shows the stochastic order is closed in $\PP(\R^d)$ and in $\PP_p(\R^d)$.
\begin{lemma}\label{lem:stochastic-order-closed}
The stochastic order is a closed partial order relation in $\PP(\R^d)$ with the weak convergence topology, and also closed in the $p$-Wasserstein space $(\PP_p(\R^d),\WW_p)$ for $p\geq1$.
\end{lemma}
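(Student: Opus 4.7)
The plan is to verify closedness of the partial order as a subset of the product space by working with sequences, which is legitimate since $\R^d$ is Polish and hence both $\PP(\R^d)$ under weak convergence and $(\PP_p(\R^d),\WW_p)$ are metrizable. So I would take sequences $\mu_n\to\mu$ and $\nu_n\to\nu$ (in the relevant topology) with $\mu_n\leq_{\mathrm{st}}\nu_n$ for every $n$, and show $\mu\leq_{\mathrm{st}}\nu$.

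For the assertion in $\PP(\R^d)$, the crucial input is the characterization recalled just before the lemma, attributed to Hiai--Lawson--Lim: $\mu\leq_{\mathrm{st}}\nu$ is equivalent to $\int_{\R^d}f\,\d\mu\leq\int_{\R^d}f\,\d\nu$ for every \emph{bounded continuous} increasing $f\colon\R^d\to\R$. Since such an $f$ is bounded and continuous, weak convergence gives $\int f\,\d\mu_n\to\int f\,\d\mu$ and $\int f\,\d\nu_n\to\int f\,\d\nu$, so passing to the limit in $\int f\,\d\mu_n\leq\int f\,\d\nu_n$ yields $\int f\,\d\mu\leq\int f\,\d\nu$. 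As $f$ was arbitrary in the test family, this gives $\mu\leq_{\mathrm{st}}\nu$, proving closedness in $\PP(\R^d)$.

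For the assertion in $(\PP_p(\R^d),\WW_p)$, I would simply recall that $\WW_p$-convergence implies weak convergence (indeed, it is equivalent to weak convergence plus convergence of $p$-th moments). Hence any $\WW_p$-convergent sequence is automatically weakly convergent, and the argument of the previous paragraph applies verbatim to show that the limit pair satisfies $\mu\leq_{\mathrm{st}}\nu$. I do not anticipate any genuine obstacle: the entire proof reduces to passage to the limit in a pointwise inequality, combined with the two standard facts that bounded continuous increasing functions suffice to test the stochastic order and that $\WW_p$-convergence implies weak convergence.
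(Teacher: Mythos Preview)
Your proposal is correct and essentially identical to the paper's own proof: the paper likewise takes sequences, invokes the Hiai--Lawson--Lim characterization via bounded continuous increasing test functions to pass to the limit under weak convergence, and then observes that $\WW_p$-convergence implies weak convergence to handle the Wasserstein case.
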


\begin{proof}
Denote a sequence $\{\mu_n\}_{n\in\N}\subset\PP(\R^d)$ weakly converging to $\mu$ as $n\to\8$ by $\mu_n\goto{w}\mu$  as $n\to\8$.
Now, suppose $\mu_n\goto{w}\mu$, $\nu_n\goto{w}\nu$, as $n\to\8$, and $\mu_n\leq\nu_n$ for each $n\in\N$.
By Hiai-Lawson-Lim \cite[Proposition 3.11]{Hiai2018}, to show $\mu\leq\nu$ we only need to show
\[
\int_{\R^d}f\d\mu\leq\int_{\R^d}f\d\nu\;\text{ for all bounded continuous increasing functions $f$}.
\]
This directly follows from $\int f\d\mu_n\leq\int f\d\nu_n$ for each $n\in\N$ and $\mu_n\goto{w}\mu$, $\nu_n\goto{w}\nu$.
Since $\mu_n\to\mu$ in $\WW_p$ implies $\mu_n\goto{w}\mu$, the stochastic order is also closed in $\PP_p(\R^d)$.
\end{proof}

Given $p\geq1$ and $\mu_1, \mu_2\in\PP(\R^d)$ with $\mu_1\leq\mu_2$. Define 
$$[\mu_1,\mu_2]_{\PP}:=\{\mu\in\PP(\R^d): \mu_1\leq\mu\leq\mu_2\}\text{\ and \ }[\mu_1,\mu_2]_{\PP_p}:=\{\mu\in\PP_p(\R^d): \mu_1\leq\mu\leq\mu_2\}$$
Then, we show that an order interval is bounded by endpoints in $\PP_p(\R^d)$ as follows.

\begin{lemma}\label{lem:order-interval-bounded}
Given $p\geq1$ and $\mu_1, \mu_2\in\PP_p(\R^d)$  with $\mu_1\leq\mu_2$.
Then the order interval $[\mu_1,\mu_2]_{\PP}$ is a bounded subset of $(\PP_p(\R^d),\WW_p)$.
\end{lemma}

\begin{proof}
    For any $x=(x_1,\cdots,x_d)\in \R^d$ and $N>0$, let
    \begin{equation*}
        f_{i,N}^+(x)=\big(|x_i|^p\wedge N\big)\one_{\{x_i> 0\}}(x),\ \ f_{i,N}^-(x)=-\big(|x_i|^p\wedge N\big)\one_{\{x_i\leq 0\}}(x), \ \ 1\leq i\leq d.
    \end{equation*}
    Then $f_{i,N}^+, f_{i,N}^-$ are bounded increasing functions for all $1\leq i\leq d$ and $N>0$. 

    For any $\mu\in [\mu_1,\mu_2]_{\PP}$, we have
    \begin{equation*}
        \int_{\R^d}f_{i,N}^+\d \mu\leq \int_{\R^d}f_{i,N}^+\d \mu_2, \ \ \int_{\R^d}f_{i,N}^-\d \mu\geq \int_{\R^d}f_{i,N}^-\d \mu_1, \ \text{ for all } \ 1\leq i\leq d.
    \end{equation*}
    Thus
    \begin{equation*}
        \int_{\R^d}\sum_{i=1}^d\big(|x_i|^p\wedge N\big)\d \mu(x)\leq \int_{\R^d}\sum_{i=1}^d\big(|x_i|^p\wedge N\big)\d \mu_1(x)+\int_{\R^d}\sum_{i=1}^d\big(|x_i|^p\wedge N\big)\d \mu_2(x).
    \end{equation*}
    Letting $N\to \8$ and by the monotone convergence theorem, we conclude that
    \begin{equation*}
        \int_{\R^d}\sum_{i=1}^d|x_i|^p\d \mu(x)\leq \int_{\R^d}\sum_{i=1}^d|x_i|^p\d \mu_1(x)+\int_{\R^d}\sum_{i=1}^d|x_i|^p\d \mu_2(x).
    \end{equation*}
    Note that
    \begin{equation*}
        |x|^p\leq \big(2^{\frac{p}{2}-1}\vee 1\big)\sum_{i=1}^d|x_i|^p\leq 2^{|\frac{p}{2}-1|}|x|^p, \ \text{ for all } \ x=(x_1,\cdots,x_d)\in \R^d.
    \end{equation*}
    We get
    \begin{equation*}
        \|\mu\|_p^p=\int_{\R^d}|x|^p\d \mu(x)\leq 2^{|\frac{p}{2}-1|}\bigg(\int_{\R^d}|x|^p\d \mu_1(x)+\int_{\R^d}|x|^p\d \mu_2(x)\bigg)\leq2^p\big(\|\mu_1\|_p^p+\|\mu_2\|_p^p\big).
    \end{equation*}
    Therefore, $[\mu_1,\mu_2]_{\PP}$  is a bounded subset of $(\PP_p(\R^d),\WW_p)$.
\end{proof}

\subsection{Construction of compatible cone}\label{Subsection:Construction-Cone}
As we mentioned in the beginning of Section \ref{Subsection:Properties-Order}, a closed partial order relation in a topological vector space can be induced by a cone.
In the following, we will extend Wasserstein space to a normed vector space equipped with a Kantorovich-type norm and identify a cone that induces a partial order in the normed  vector space, which coincides with the stochastic order when restricted to $\PP_1(\R^d)$.

Firstly, recall  that $1$-Wasserstein metric $\WW_1$ has  Kantorovich-Rubinstein duality (see e.g., Villani \cite[(5.11)]{Villani2009}),
\begin{equation}\label{eq:w1-dual}
\WW_1(\mu,\nu)=\sup\left\{\int_{\R^d}f\d\mu-\int_{\R^d}f\d\nu:\text{ $f$ is $1$-Lipschitz}\right\}.
\end{equation}
Such duality enables us to construct a norm on the space of finite signed Borel measures on $\R^d$ with finite first moments,
\[
\MM_1(\R^d):=\left\{\mu\in\MM(\R^d):\int_{\R^d}\abs x\d\abs{\mu}(x)<\8\right\}.
\]
Define
\begin{equation}\label{eq:extended-wasserstein-metric}
\norm{\mu}_{\WW_1}:=\sup\left\{\abs{\int_{\R^d}f\d\mu}:\text{ $\abs{f(0)}\leq1$, $f$ is $1$-Lipschitz}\right\}, \ \ \text{for\ }\mu\in\MM_1(\R^d).
\end{equation}
Then the following lemma implies $(\MM_1(\R^d),\norm{\cdot}_{\WW_1})$ is a normed vector space.

\begin{lemma}\label{lem:m1-norm-space}
$(\MM_1(\R^d),\norm{\cdot}_{\WW_1})$ is a normed vector space.
And, $\norm{\mu-\nu}_{\WW_1}=\WW_1(\mu,\nu)$ for all $\mu,\nu\in\PP_1(\R^d)$.
\end{lemma}

\begin{proof}
Clearly, one has $\norm{\xl\mu}_{\WW_1}=\abs{\xl}\norm{\mu}_{\WW_1}$ and $\norm{\mu+\nu}_{\WW_1}\leq\norm{\mu}_{\WW_1}+\norm{\nu}_{\WW_1}$.
By the dual representation \eqref{eq:w1-dual} of $\WW_1$, we have $\norm{\mu-\nu}_{\WW_1}=\WW_1(\mu,\nu)$ for $\mu,\nu\in\PP_1(\R^d)$.
It remains to show that $\norm{\mu}_{\WW_1}=0$ implies $\mu=0$.
Take $x=(x_i)_{i=1}^d\in\R^d$ and define $f_{n,i}\colon\R\to\R$, $f_n\colon\R^d\to\R$ by
\begin{equation}\label{eq:construction-fni}
f_{n,i}(u):=\begin{cases}
0,&u\leq x_i,\\
n(u-x_i),&x_i<u<x_i+1/n,\\
1,&u\geq x_i+1/n,
\end{cases}\qquad
f_n(u_1,\ldots,u_d):=\prod_{i=1}^df_{n,i}(u_i).
\end{equation}
Then $\abs{f_n(0)}\leq1$, $\frac{1}{n\sqrt{d}}f_n$ is $1$-Lipschitz, and $f_n\to\one_{(x,\8)}$ as $n\to\8$.
Since $\norm{\mu}_{\WW_1}=0$, it follows $\int f_n\d\mu=0$ for each $n\in\N$.
Then the dominated convergence theorem shows $\mu((x,\8))=0$, and this means $\mu=0$ as $x\in\R^d$ is arbitrary.
\end{proof}

\begin{remark}\label{rem:p1-closed-m1}
The $1$-Wasserstein space $\PP_1(\R^d)$ is a closed convex subset of $(\MM_1(\R^d),\norm{\cdot}_{\WW_1})$, since $(\PP_1(\R^d),\WW_1)$ is a complete metric space, and  $\norm{\mu-\nu}_{\WW_1}=\WW_1(\mu,\nu)$ for all $\mu,\nu\in\PP_1(\R^d)$.
\end{remark}

\begin{remark}\label{rem:m1-not-banach}
The space $(\MM_1(\R^d),\norm{\cdot}_{\WW_1})$ is NOT a Banach space.
W.L.O.G, let $d=1$, and $\mu_n=\xd_{1/n^2}-\xd_0$, $\nu_n=\sum_{k=1}^n\mu_k$, for $n\in\N$.
Then $\{\nu_n\}$ is a Cauchy sequence in $(\MM_1(\R),\norm{\cdot}_{\WW_1})$.
Assume $\norm{\nu_n-\nu}_{\WW_1}\to0$ as $n\to\8$ for some $\nu\in\MM_1(\R)$.
Define $f_m\colon\R\to\R$ by
\[
f_m(u):=\begin{cases}
0,&u\leq0,\\
m^2u,&0<u<1/m^2,\\
1,&u\geq1/m^2,
\end{cases}
\]
and we have $\int f_m\d\nu_n\geq m$ for $n\geq m$.
Let $n\to\8$, and it gives $\abs{\nu}(\R)\geq\int f_m\d\nu\geq m$.
As $m$ can be arbitrarily large, we obtain $\abs{\nu}(\R)=\8$.
This contradicts $\nu\in\MM_1(\R)$.
Indeed, the limit of $\{\nu_n\}$ lives in the dual of some Lipschitz function space (see Bouchitté-Champion-Jimenez \cite{Bouchitté-Champion-Jimenez2005}).
\end{remark}

To recover the stochastic order, a candidate cone in $\MM_1(\R^d)$ is as below,
\begin{equation}\label{eq:cone}
\begin{aligned}
C:=\bigg\{\mu\in\MM_1(\R^d): &\int_{\R^d}f\d\mu\geq0\text{ for all non-negative $1$-Lipschitz increasing functions $f$}\bigg\}.
\end{aligned}
\end{equation}
Let $\mu,\nu\in\MM_1(\R^d)$. If $\nu-\mu\in C$, we say $\mu\leq_{\MM_1}\nu$.
If $\nu-\mu\in  C\backslash\{0\}$, we write $\mu<_{\MM_1}\nu$.
And if $\mu\leq_{\MM_1}\nu$, we denote $[\mu,\nu]_{\MM_1}:=\{\rho\in\MM_1(\R^d):\ \mu\leq_{\MM_1}\rho\leq_{\MM_1}\nu\}$.

\begin{lemma}\label{lem:m1-cone}
The set $C$ given in (\ref{eq:cone}) is a cone in $\MM_1(\R^d)$.
In other words, $C$ is a closed convex subset of $\MM_1(\R^d)$, $\xl C\subset C$ for all $\xl>0$, and $C\cap(-C)=\{0\}$.
Moreover, for $\mu,\nu\in\PP_1(\R^d)$, $\mu\leq\nu$ if and only if $\mu\leq_{\MM_1}\nu$.
\end{lemma}

\begin{proof}
It is obvious that $C$ is convex, and $\xl C\subset C$ for all $\xl>0$.
To see $C\cap(-C)=\{0\}$, we only need to show $\mu=0$ if $\int f\d\mu=0$ for any non-negative $1$-Lipschitz increasing function $f$.
Considering the functions $f_n$ given in \eqref{eq:construction-fni} again, we have $\frac{1}{n\sqrt{d}}f_n$ is non-negative $1$-Lipschitz increasing, for each $n\in\N$.
Also, $f_n\to\one_{(x,\8)}$ as $n\to\8$.
It follows from the dominated convergence theorem that $\mu((x,\8))=0$.
This gives $\mu=0$ since $x\in\R^d$ is arbitrary.

To show $C$ is closed, suppose $\norm{\mu_n-\mu}_{\WW_1}\to0$, $\mu_n\in C$ for all $n\in\N$, and $\mu\in\MM_1(\R^d)$.
We need to show $\mu\in C$ as well.
Let $f$ be a non-negative $1$-Lipschitz increasing function.
Then there exists some $\xl\geq1$ such that $\frac1{\xl}\abs{f(0)}\leq1$.
We observe $\abs{\int\frac1{\xl}f\d\mu_n-\int\frac1{\xl}f\d\mu}\leq\norm{\mu_n-\mu}_{\WW_1}\to0$, so $\int f\d\mu=\lim_{n\to\8}\int f\d\mu_n\geq0$, which shows $\mu\in C$ as $f$ is arbitrary.

Next, we want to show the partial order ``$\leq_{\MM_1}$'' in $\MM_1(\R^d)$ induced by $ C$ is equivalent to the stochastic order when restricted to $\PP_1(\R^d)$.
As we mentioned before, Fritz-Perrone \cite[Theorem 4.2.1]{Fritz2020} entails that, for $\mu, \nu\in\PP_1(R^d)$, $\mu\leq\nu$ is equivalent to $\int f\d\mu\leq\int f\d\nu$ for any $1$-Lipschitz increasing function $f$.
On the one hand, for $\mu, \nu\in\PP_1(R^d)$ with $\mu\leq\nu$, it is easy to see from the definition of the cone $C$ that, $\nu-\mu\in C$.
On the other hand, for $\mu, \nu\in\PP_1(R^d)$ with $\nu-\mu\in C$, and a $1$-Lipschitz increasing function $f$,  one has
\[
\int\left(f\vee(-n)+n\right)\d\mu\leq\int\left(f\vee(-n)+n\right)\d\nu\quad\text{ for all $n\in\N$},
\]
so we have
\[
\int f\vee(-n)\d\mu\leq\int f\vee(-n)\d\nu\quad\text{ for all $n\in\N$}.
\]
Since $\abs{f(x)}\leq\abs{f(0)}+\abs x$, $\mu ,\nu\in\PP_1(\R^d)$, we invoke the dominated convergence theorem to get $\int f\d\mu\leq\int f\d\nu$.
Hence, by \cite[Theorem 4.2.1]{Fritz2020}, we have $\mu\leq\nu$.
\end{proof}

\begin{remark}
    For any $\mu_1,\mu_2\in \MM_1(\R^d)$, denote
    \begin{equation*}
        [\mu_1,\mu_2]_{\MM_1}:=\{\mu\in \MM_1(\R^d): \mu_1\leq_{\MM_1} \mu\leq_{\MM_1} \mu_2\}.
    \end{equation*}
    Even if $\mu_1,\mu_2\in\PP_{\8}(\R^d)$, we do not have $[\mu_1,\mu_2]_{\MM_1}\subset \PP(\R^d)$. 
    Here, $\PP_{\8}(\R^d)$ denotes the set of probability measures on $\R^d$ with finite moments of all orders $p\geq1$.
    In fact, let $x,y,z\in \R^d$ with $x<y<z$.
    Then it is easy to see that $\delta_x<_{\MM_1}\delta_y<_{\MM_1}\delta_z$ and hence
    \begin{equation*}
        \delta_x<_{\MM_1}\delta_z-\delta_y+\delta_x<_{\MM_1}\delta_z.
    \end{equation*}
    So $\delta_z-\delta_y+\delta_x\in [\delta_x,\delta_z]_{\MM_1}$. However, $\delta_z-\delta_y+\delta_x\notin \PP(\R^d)$.
\end{remark}

\begin{remark}\label{R:empty interior of cone}
The cone $C$ has no interior point in $\MM_1(\R^d)$.
Indeed, for any $\mu\in C$ and any $\xe>0$, we can show there is some $\nu\in\MM_1(\R^d) \backslash C$ such that $\norm{\mu-\nu}_{\WW_1}<\xe$.
Actually, there exists some $N>0$ such that $\int_{\{\abs x>N\}}(1+\abs x)\d\abs{\mu}<\frac{\xe}{2}$.
Set $y=(N+1,0,\dots,0)\in\R^d$ and let
\[
\d\nu(x)=\one_{\{\abs x\leq N\}}\cdot\d\mu(x)-\frac{\xe}{2(2+N)}\one_{\{\abs x>N\}}\cdot\d\xd_y(x).
\]
By the construction of $\nu$, we have $\nu\in\MM_1(\R^d)$ and $\norm{\mu-\nu}_{\WW_1}<\xe$.
Consider $f\colon\R^d\to\R$ defined below,
\[
f(x_1,\ldots,x_d):=\begin{cases}
0,&x_1\leq N,\\
x_1-N,&N<x_1<N+1,\\
1,&x_1\geq N+1.
\end{cases}
\]
Then $f$ is non-negative $1$-Lipschitz increasing and $\int f\d\nu=-\frac{\xe}{2(2+N)}<0$, which implies $\nu\notin C$.
\end{remark}

\section{Generation of Monotone Dynamical Systems}\label{Section:Generation-MDS}

In this section, we first verify the joint continuity of the semigroup $\str P$ induced by a McKean–Vlasov SDE, thereby showing that it can be regarded as a semiflow. 
Subsequently, under the cooperative condition, we prove  a comparison theorem for McKean–Vlasov SDEs with respect to the stochastic  order and show that the corresponding single equation gives rise to a monotone semiflow $\str P$ on $\mathcal{P}_2(\mathbb{R}^d)$.

\subsection{Joint continuity of semigroups of McKean-Vlasov SDEs}\label{Subsection:Generation-DS}

Denote $\mathbb{T_+}=\{x\in\mathbb{T}: x\geq0\}$, $\mathbb{T_-}=\{x\in\mathbb{T}: x\leq0\}$, where $\mathbb{T}=\mathbb{R}\text{ or } \mathbb{Z}$. 
Let $(X,d)$ be a complete metric space with a metric $d$.
Firstly, we give the definition of a continuous-time dynamical system, or named semiflow. 

\begin{definition}\label{D:semiflow}
A map $\Phi: 
\mathbb{R_+}\times X\rightarrow X$ is said to be a semiflow, if 
\begin{enumerate}[label=(\roman*)]
    \item  $\Phi: \mathbb{R_+}\times X\rightarrow X$ is continuous;
    \item $\Phi(0,x)=x$;
    \item $\Phi(t,\Phi(s,x))=\Phi(t+s,x)$ for all $t,s\in \mathbb{R_+}$ and $x\in X$.
\end{enumerate}
\end{definition}

In the following, we will verify that a McKean-Vlasov SDE will generate a semiflow on $\PP_2(\R^d)$.
Consider a McKean-Vlasov SDE on $\R^d$ as follows.
\begin{equation}\label{eq:mvsde}
\d X_t=b(X_t,\LL(X_t))\d t+\xs(X_t)\d W_t,\qquad X_0=\xi.
\end{equation}
Let $X_t^{\xi}$ be the solution of \eqref{eq:mvsde} with initial condition $\xi$.
In fact, under Assumption \ref{asp:lipschitz}, the existence and uniqueness of $L^2$-solutions are proved in Wang \cite[Theorem 2.1]{Wang18}.
Define $\str P_t\colon\PP_2(\R^d)\to\PP_2(\R^d)$ by
\[
\str P_t\mu:=\LL(X^{\xi}_t),\text{ with $\LL(\xi)=\mu$}.
\]
$\str P_t$ is well-defined and obviously satisfies the semigroup property.

\begin{proposition}\label{prop:continuity-Pt}
    Under Assumption \ref{asp:lipschitz}, the semigroup $\str P_t$ of \eqref{eq:mvsde} is a semiflow on $(\PP_2(\R^d),\WW_2)$.
\end{proposition}
\begin{proof}
    It suffices to show that for a sequence $(t_n,\mu_n) \in [0,\infty)\times \mathcal{P}_2(\mathbb{R}^d)$ with $t_n\to t$ and $\WW_2(\mu_n,\mu)\to 0$, we have $\WW_2(P^*_{t_n}\mu_n,P^*_{t}\mu)\to 0$.
    
    Firstly, we will prove the following two claims.

    \noindent\textbf{Claim 1.} For any $\mu,\nu\in \mathcal{P}_2(\mathbb{R}^d)$ and $0\leq t\leq T$, there exists $C_T>0$ such that $\WW_2(P^*_{t}\mu,P^*_{t}\nu)\leq C_T \WW_2(\mu,\nu)$.

    \noindent\textbf{Claim 2.} For any $\mu\in \mathcal{P}_2(\mathbb{R}^d)$, $P^*_t\mu$ is continuous in $t$, i.e., for any $t_n\to t$, we have $\WW_2(P^*_{t_n}\mu,P^*_{t}\mu)\to 0$.
    
    \noindent\textbf{Proof of Claim 1:} 
    By \cite[Theorem 4.1]{Villani2009}, we can choose
    $\xi,\eta\in L^2(\Omega;\R^d)$ such that $\LL(\xi)=\mu, \LL(\eta)=\nu$ and $\WW_2^2(\mu,\nu)=\mathbf{E}[|\xi-\eta|^2]$.
    Let $X_t^{\xi}, X_t^{\eta}$ be the solutions of \eqref{eq:mvsde} with initial conditions $\xi, \eta$ respectively. Set $\hat{X}_t:=X_t^{\xi}-X_t^{\eta}$ and
    \begin{equation*}
        \hat{b}_t:=b\big(X_t^{\xi}, \LL(X_t^{\xi})\big)-b\big(X_t^{\eta}, \LL(X_t^{\eta})\big), \ \ 
        \hat{\sigma}_t:=\sigma(X_t^{\xi})-\sigma(X_t^{\eta}).
    \end{equation*}
    Applying It\^o's formula to $|\hat{X}_t|^2$, one has
    \begin{equation*}
       \begin{split}
            \d |\hat{X}_t|^2&=\big(2\langle\hat{X}_t,\hat{b}_t\rangle+\|\hat{\xs}_t\|_2^2\big)\d t+2\langle\hat{X}_t,\hat{\xs}_t\d W_t\rangle\\
            &\leq 4K|\hat{X}_t|^2\d t+K\WW_2^2(\LL(X_t^{\xi}),\LL(X_t^{\eta}))\d t+2\langle\hat{X}_t,\hat{\xs}_t\d W_t\rangle\\
            &\leq 4K|\hat{X}_t|^2\d t+K\mathbf{E}[|\hat{X}_t|^2]\d t+2\langle\hat{X}_t,\hat{\xs}_t\d W_t\rangle.
       \end{split}
    \end{equation*}
    Then it follows that
    \begin{equation*}
       \begin{split}
            \mathbf{E}[|\hat{X}_t|^2]&\leq \mathbf{E}[|\xi-\eta|^2]+\mathbf{E}\bigg[\int_0^t\big(4K|\hat{X}_s|^2+K\mathbf{E}[|\hat{X}_s|^2]\big)\d s\bigg]\\
            &\leq \WW_2^2(\mu,\nu)+5K\int_0^t\mathbf{E}[|\hat{X}_s|^2]\d s.
       \end{split}
    \end{equation*}
    By Gronwall's inequality, for any $t\in [0,T]$, we have
    \begin{equation*}
        \WW_2^2(P^*_{t}\mu,P^*_{t}\nu)\leq \mathbf{E}[|\hat{X}_t|^2]\leq e^{5KT}\WW_2^2(\mu,\nu).
    \end{equation*}
Taking $C_T=\e^{3KT}$, we finish the proof of Claim 1.

    \noindent\textbf{Proof of Claim 2:} Applying It\^ o's formula to $|X_t^{\xi}|^2$ together with Assumption \ref{asp:lipschitz},  we conclude that
    \begin{equation}\label{eq:new1221}
       \begin{split}
            |X_t^{\xi}|^2&=|\xi|^2+\int_0^t\big(2\langle X_s^{\xi}, b(X_s^{\xi},\LL(X_s^{\xi}))\rangle+|\xs(X_s^{\xi})|\big)\d s+2\int_0^t\langle X_s^{\xi}, \xs(X_s^{\xi})\d W_s\rangle\\
            &\leq |\xi|^2+\int_0^t\big((5K+1)|X_s^{\xi}|^2+K\bE[|X_s^{\xi}|^2]+|b(0,\delta_0)|^2+2|\xs(0)|^2\big)\d s\\
            &\qquad\qquad\qquad\qquad\qquad\qquad\qquad\qquad\qquad\qquad +2\int_0^t\langle X_s^{\xi}, \xs(X_s^{\xi})\d W_s\rangle.
       \end{split}
    \end{equation}
    Hence, for any $T>0$, by the Burkholder-Davis-Gundy inequality, we have
    \begin{equation}\label{eq:1009-1}
        \begin{split}
            \bE\Big[\sup_{t\in [0,T]}|X_t^{\xi}|^2\Big]&\leq \bE[|\xi|^2]+(6K+1)\int_0^T\bE[|X_s^{\xi}|^2]\d s+(|b(0,\delta_0)|^2+2|\xs(0)|^2)T\\
            & \ \ \ \ +2\bE\bigg[\sup_{t\in [0,T]}\bigg|\int_0^t\langle X_s^{\xi}, \xs(X_s^{\xi})\d W_s\rangle\bigg|\bigg]\\
            &\leq \bE[|\xi|^2]+(|b(0,\delta_0)|^2+2|\xs(0)|^2)T+(6K+1)\int_0^T\bE\Big[\sup_{t\in [0,s]}|X_t^{\xi}|^2\Big]\d s\\
            & \ \ \ \ +2c_1\bE\bigg[\bigg(\int_0^T|X_s^{\xi}|^2|\xs(X_s^{\xi})|^2\d s\bigg)^{\frac{1}{2}}\bigg].
        \end{split}
    \end{equation}
    Note that
    \begin{equation}\label{eq:1009-2}
        \begin{split}
            &\ \ \ \ 2c_1\bE\bigg[\bigg(\int_0^T|X_s^{\xi}|^2|\xs(X_s^{\xi})|^2\d s\bigg)^{\frac{1}{2}}\bigg]\\
            &=2c_1\bE\bigg[\Big(\sup_{t\in[0,T]}|X_s^{\xi}|\Big)\bigg(\int_0^T|\xs(X_s^{\xi})|^2\d s\bigg)^{\frac{1}{2}}\bigg]\\
            &\leq \frac{1}{2}\bE\Big[\sup_{t\in [0,T]}|X_t^{\xi}|^2\Big]+2c_1^2\int_0^T\bE[|\xs(X_s^{\xi})|^2]\d s\\
            &\leq \frac{1}{2}\bE\Big[\sup_{t\in [0,T]}|X_t^{\xi}|^2\Big]+4c_1^2K\int_0^T\bE\Big[\sup_{t\in [0,s]}|X_t^{\xi}|^2\Big]\d s+4c_1^2|\xs(0)|^2T.
        \end{split}
    \end{equation}
    It follows from \eqref{eq:1009-1}, \eqref{eq:1009-2} that 
    \begin{equation*}
        \begin{split}
            \bE\Big[\sup_{t\in [0,T]}|X_t^{\xi}|^2\Big]&\leq 2\bE[|\xi|^2]+\big(2|b(0,\delta_0)|^2+(4+8c_1^2)|\xs(0)|^2\big)T\\
            &\ \ \ \ +2\big((6+4c_1^2)K+1\big)\int_0^T\bE\Big[\sup_{t\in [0,s]}|X_t^{\xi}|^2\Big]\d s.
        \end{split}
    \end{equation*}
    Then Gronwall's inequality gives
    \begin{equation}\label{eq:mvsde-estimate}
        \bE\Big[\sup_{t\in [0,T]}|X_t^{\xi}|^2\Big]\leq C_T\big(1+\bE[|\xi|^2]\big),
    \end{equation}
    where
    \begin{equation*}
        C_T=\Big(2+\big(2|b(0,\delta_0)|^2+(4+8c_1^2)|\xs(0)|^2\big)T\Big)e^{2\big((6+4c_1^2)K+1\big)T}.
    \end{equation*}
    Note that the solution $\{X_t^{\xi}\}_{t\geq 0}$ is continuous a.s., so $\lim_{n\to \infty}|X_{t_n}^{\xi}-X_t^{\xi}|^2=0$, a.s. Then it follows from \eqref{eq:mvsde-estimate} and the dominated convergence theorem that
    \begin{equation*}
        \limsup_{n\to \infty}\WW_2(P^*_{t_n}\mu,P^*_{t}\mu)\leq \lim_{n\to \infty}\big(\mathbf{E}[|X_{t_n}^{\xi}-X_t^{\xi}|^2]\big)^{1/2}=0.
    \end{equation*}   
 We finish the proof of Claim 2. 

Now, we continue the proof of the proposition. Note that there exists $T>0$ such that $\sup_{n\geq 1}t_n\vee t\leq T$. By Claim 1, we have
    \begin{equation*}
        \begin{split}
            \WW_2(P^*_{t_n}\mu_n,P^*_{t}\mu)&\leq \WW_2(P^*_{t_n}\mu_n,P^*_{t_n}\mu)+\WW_2(P^*_{t_n}\mu,P^*_{t}\mu)\\
            &\leq C_T\WW_2(\mu_n,\mu)+\WW_2(P^*_{t_n}\mu,P^*_{t}\mu),
        \end{split}
    \end{equation*}
    which together with Claim 2 implies that $\lim_{n\to \infty} \WW_2(P^*_{t_n}\mu_n,P^*_{t}\mu)=0$.
\end{proof}

\subsection{Comparison principle with respect to stochastic order}\label{Subsection:Comparison-principle}

Firstly, we give the definition of monotone dynamical systems. 
Let $X$ be a complete metric space with a closed partial order relation ``$\leq$'' and  $\Phi: 
\mathbb{R_+}\times X\rightarrow X$ be a semiflow on $X$.
We will henceforth also denote $\Phi(t,x)$ by $\Phi_{t}(x)$.  
\begin{definition}\label{D:order-preserving}
A semiflow $\Phi$ on $X$ is called monotone (strictly monotone), if $x<y$ implies $\Phi_t(x)\leq\Phi_t(y)$ ($\Phi_t(x)<\Phi_t(y)$) for any $t>0$. 
Similarly, a continuous mapping $\Psi:X\to X$ is monotone (strictly monotone), if $x<y$ implies $\Psi(x)\leq\Psi(y)$ ($\Psi(x)<\Psi(y)$).  
A monotone semiflow  or mapping  is also called an order-preserving semiflow  or mapping.
\end{definition}

In the previous subsection, we have proven that a Mckean-Vlasov SDE generates a semiflow on $\PP_2(\R^d)$. Next, we further prove that under the cooperative condition, this semiflow is a monotone semiflow with respect to the stochastic order.
Let $C(0,T;\PP_2(\R^d))$ be the set of continuous maps $\mu\colon[0,T]\to\PP_2(\R^d)$, on which a natural metric defined by
\[
\WW_{2,T}(\mu,\nu):=\sup_{0\leq t\leq T}\WW_2(\mu(t),\nu(t))
\]
makes the space $C(0,T;\PP_2(\R^d))$ complete.
Let $C(0,\8;\PP_2(\R^d))$ be the set of continuous maps $\mu\colon[0,\8)\to\PP_2(\R^d)$.
We say a sequence $\{\mu^n\}_{n=1}^{\8}$ in $C(0,\8;\PP_2(\R^d))$ converges to $\nu$ if
\[
\WW_{2,T}(\mu^n,\nu)\to0\quad\text{as $n\to\8$, for all $T\geq0$}.
\]
It is well-known that the space $C(0,\8;\PP_2(\R^d))$ is complete under some compatible metric.

We start with a comparison theorem for (extrinsic)-law-dependent SDEs. Taking $\mu,\nu\in C(0,\8;\PP_2(\R^d))$ and considering
\begin{equation}\label{eq:ldsde-comparison-X}
\d X(t)=b(X(t),\mu(t))\d t+\xs(X(t))\d W_t,\  X(0)=\xi,
\end{equation}
\begin{equation}\label{eq:ldsde-comparison-Y}
\d Y(t)= c(Y(t),\nu(t))\d t+\xs(Y(t))\d W_t,\  Y(0)=\eta.
\end{equation}
Let $X^{\mu,\xi}_t$ be the solution of \eqref{eq:ldsde-comparison-X} and $Y^{\nu,\eta}_t$ be the solution of \eqref{eq:ldsde-comparison-Y}.

\begin{assumption}\label{asp:comparison}
The functions $b,\,c\colon\R^d\times\PP_2(\R^d)\to\R^d$ satisfy, for all $x
\in\R^d$, $\mu\in\PP_2(\R^d)$,
\[
b_i(x,\mu)\leq c_i(x,\mu)\ \text{ for all $i=1,2,\dots,d$}.
\]
\end{assumption}

\begin{lemma}\label{lem:ldsde-comparison}
Assume that Assumption \ref{asp:lipschitz}(i) holds for $b$ and $c$, and that Assumption \ref{asp:lipschitz}(ii) and  Assumption \ref{asp:cooperation}(ii) holds for $\xs$. Also assume that Assumption \ref{asp:cooperation}(i) holds for either $b$ or $c$ and that Assumption \ref{asp:comparison} holds.
If $\xi,\,\eta$ are square integrable with $\LL(\xi)\leq\LL(\eta)$, and $\mu(t)\leq\nu(t)$ for all $t\geq0$, then $\LL(X^{\mu,\xi}_t)\leq\LL(Y^{\nu,\eta}_t)$ for all $t\geq0$.
\end{lemma}

\begin{proof}
By Strassen's theorem, we may assume $\xi\leq\eta$ $\as$, and it suffices to show $X(t)\leq Y(t)$ $\as$, for any $t\geq0$.
From Assumption \ref{asp:cooperation}, \ref{asp:comparison} and the condition $\mu(t)\leq\nu(t)$, the assumptions in the classical comparison theorem (see e.g., \cite[Theorem 1.2]{Geib1994}) for usual time-inhomogeneous SDEs, namely \eqref{eq:ldsde-comparison-X} and \eqref{eq:ldsde-comparison-Y}, can be verified easily.
We apply it to obtain $X^{\mu,\xi}_t\leq Y^{\nu,\eta}_t$ a.s.  for all $t\geq0$, which means $\LL(X^{\mu,\xi}_t)\leq\LL(Y^{\nu,\eta}_t)$  for all $t\geq0$.
\end{proof}

Now we want to pass the comparison theorem to the case of intrinsic-law dependence, namely to the McKean-Vlasov SDEs.
To this end, we define a map by
\begin{equation}\label{eq:phi-definition}
\Upsilon_{b}\colon C(0,\8;\PP_2(\R^d))\to C(0,\8;\PP_2(\R^d)),\quad\mu\mapsto\LL(X^{\mu,\xi}),
\end{equation}
where $X^{\mu,\xi}_t$ is the solution of \eqref{eq:ldsde-comparison-X}.
For $\rho\in\PP_2(\R^d)$, we also abuse the notation by using $\Upsilon_{b}(\rho)$ to refer to $\Upsilon_{b}(\{\rho(t)\}_{t\in[0,\8)})$, where $\rho(t)=\rho$ for any $t\in[0,\8)$.

By the definition of $\Upsilon_{b}$, it is easy to see that its fixed point is the solution of the McKean-Vlasov SDE below,
\[
\d X(t)=b(X(t),\LL(X(t)))\d t+\xs(X(t))\d W_t,\qquad X(0)=\xi.
\]
\begin{lemma}\label{lem:phi-fixed-point}
Under Assumption \ref{asp:lipschitz}, the map $\Upsilon_{b}$ has a unique fixed point.
Moreover, the fixed point of $\Upsilon_{b}$ is the limit of $\Upsilon_{b}^n(\LL(\xi))$ in the space $C(0,\8;\PP_2(\R^d))$ as $n\to\8$.
\end{lemma}

\begin{proof}
Let $\mu,\nu\in C(0,\8;\PP_2(\R^d))$.
By Assumption \ref{asp:lipschitz},  for each $\xl>0$ and $t\geq0$, It\^o's formula together with Young's inequality gives
\begin{align*}
&\ \ \ \ \e^{-2\xl t}\WW_2^2\big(\Upsilon_{b}(\mu)_t,\Upsilon_{b}(\nu)_t\big)\leq\e^{-2\xl t}\bE\big[\big|X^{\mu,\xi}_t-X^{\nu,\xi}_t\big|^2\big]\\
&=\int_0^t(-2\xl)\e^{-2\xl s}\bE\big[\big|X^{\mu,\xi}_s-X^{\nu,\xi}_s\big|^2\big]\d s\\
&\ \ \ \ +\int_0^t2\e^{-2\xl s}\bE\big[\inp{X^{\mu,\xi}_s-X^{\nu,\xi}_s}{b(X^{\mu,\xi}_s,\mu_s)-b(X^{\nu,\xi}_s,\nu_s)}\big]\d s\\
&\ \ \ \ +\int_0^t\e^{-2\xl s}\bE\big[\big|\xs(X^{\mu,\xi}_s)-\xs(X^{\nu,\xi}_s)\big|^2\big]\d s\\
&\leq\int_0^t(4K-2\xl)\e^{-2\xl s}\bE\big[\big|X^{\mu,\xi}_s-X^{\nu,\xi}_s\big|^2\big]\d s+\int_0^tK\e^{-2\xl s}\WW_2^2(\mu_s,\nu_s)\d s.
\end{align*}
Choose $\xl=2K$ and fix $\xl$ henceforth, so that
\begin{equation}\label{E:eee}
\e^{-2\xl t}\WW_2^2(\Upsilon_{b}(\mu)_t,\Upsilon_{b}(\nu)_t)\leq K\int_0^t\e^{-2\xl s}\WW_2^2(\mu_s,\nu_s)\d s,\text{ for all $t\geq0$}.
\end{equation}
For $\xl>0$ and $t>0$, define a new metric on $C(0,t;\PP_2(\R^d))$
\[
\WW_{2,t,\xl}(\mu,\nu):=\sup_{0\leq s\leq t}\e^{-\xl s}\WW_2(\mu_s,\nu_s),
\]
Obviously, it is equivalent to $\WW_{2,t}$.
By \eqref{E:eee}, we have
\begin{equation}\label{eq:phi-estime}
\WW_{2,t,\xl}^2(\Upsilon_{b}(\mu),\Upsilon_{b}(\nu))\leq K\int_0^t\WW_{2,s,\xl}^2(\mu,\nu)\d s,\text{ for all $t\geq0$}.
\end{equation}

To show the existence of a fixed point of $\Upsilon_{b}$, we set a sequence $\{\mu^n\}_{n=0}^{\8}$ in $C(0,\8;\PP_2(\R^d))$,
\[
\mu^0_t\equiv\LL(\xi),\quad\mu^n=\Upsilon_{b}(\mu^{n-1}),\text{ for all $n\in\N$}.
\]
Then by the estimate \eqref{eq:phi-estime}, we have
\[
\WW_{2,t,\xl}^2(\mu^{n+1},\mu^n)\leq K\int_0^t\WW_{2,s,\xl}^2(\mu^n,\mu^{n-1})\d s,\text{ for all $t\geq0$, $n\in\N$}.
\]
Similar to \eqref{eq:new1221}, It\^o's formula to $|X^{\mu^0,\xi}_t|^2$ together with Assumption \ref{asp:lipschitz} yields
\begin{equation*}
\|\mu^1_t\|_2^2=\mathbf{E}\big[|X^{\mu^0,\xi}_t|^2\big]\leq \mathbf{E}[|\xi|^2]+(5K+1)\int_0^t\mathbf{E}\big[|X^{\mu^0,\xi}_s|^2\big]\d s+(K\mathbf{E}[|\xi|^2]+|b(0,\delta_0)|^2+2|\xs(0)|^2)t.
\end{equation*}
Then Gronwall's inequality gives
\begin{equation*}
    \|\mu^1_t\|_2^2\leq \Big(\mathbf{E}[|\xi|^2]+(K\mathbf{E}[|\xi|^2]+|b(0,\delta_0)|^2+2|\xs(0)|^2)t\Big)e^{(5K+1)t}.
\end{equation*}
Hence, for any $t\geq0$, there exists an increasing function $K_0: \mathbb{R}^+\to \mathbb{R}^+$ such that
$W_{2,t,\xl}(\mu^1,\mu^0)\leq K_0(t)$.
It follows from induction that
\[
\WW_{2,t,\xl}^2(\mu^{n+1},\mu^n)\leq\frac{K_0(t)(Kt)^n}{n!},
\]
so we obtain
\[
\sum_{n=0}^{\8}\WW_{2,t,\xl}(\mu^{n+1},\mu^n)<\8,\text{ for all $t\geq0$}.
\]
This means $\{\mu^n\}_{n=0}^{\8}$ is a Cauchy sequence in $C(0,\8;\PP_2(\R^d))$, and hence, it has a limit $\mutd$.
The limit $\mutd$ is exactly a fixed point of $\Upsilon_{b}$, since \eqref{eq:phi-estime} implies $\Upsilon_{b}$ is continuous.

As for the uniqueness of the fixed point, we assume $\mu,\nu$ are two fixed points of $\Upsilon_{b}$.
Then by (\ref{eq:phi-estime}), we get
\[
\WW_{2,t,\xl}^2(\mu,\nu)\leq K\int_0^t\WW_{2,s,\xl}^2(\mu,\nu)\d s,\text{ for all $t\geq0$}.
\]
Gronwall's inequality yields $\WW_{2,t,\xl}(\mu,\nu)=0$ for all $t>0$, so $\mu=\nu$.
\end{proof}

Next, we consider two McKean-Vlasov SDEs with the same diffusion term,
\begin{equation}\label{eq:mvsde-comparison-X}
\d X(t)=b(X(t),\LL(X(t)))\d t+\xs(X(t))\d W_t,\  X(0)=\xi,
\end{equation}
\begin{equation}\label{eq:mvsde-comparison-Y}
\d Y(t)= c(Y(t),\LL(Y(t)))\d t+\xs(Y(t))\d W_t,\  Y(0)=\eta.
\end{equation}
Let $X_t^{\xi}$ be the solution of \eqref{eq:mvsde-comparison-X} and $Y^{\eta}_t$ be the solution of \eqref{eq:mvsde-comparison-Y}.
The following proposition gives the comparison principle for McKean-Vlasov SDEs.

\begin{proposition}\label{thm:mvsde-comparison}
Assume that Assumption \ref{asp:lipschitz}(i) holds for $b$ and $c$, and that Assumption \ref{asp:lipschitz}(ii) and  Assumption \ref{asp:cooperation}(ii) holds for $\xs$. Also assume that Assumption \ref{asp:cooperation}(i) holds for either $b$ or $c$ and that Assumption \ref{asp:comparison} holds.
If $\xi,\,\eta$ are square integrable with $\LL(\xi)\leq\LL(\eta)$, then $\LL(X_t^{\xi})\leq\LL(Y^{\eta}_t)$ for all $t\geq0$.
\end{proposition}

\begin{proof}
In the same manner as in (\ref{eq:phi-definition}), we define  a map  
\begin{equation}
\Upsilon_{c}\colon C(0,\8;\PP_2(\R^d))\to C(0,\8;\PP_2(\R^d)),\quad\nu\mapsto\LL(Y^{\nu,\eta}),
\end{equation}
where $Y^{\nu,\eta}_t$ is the solution of \eqref{eq:ldsde-comparison-Y}.

By Lemma \ref{lem:phi-fixed-point}, we see
\[
\begin{aligned}
\Upsilon_{b}^n(\LL(\xi))(t)&\to\LL(X_t^{\xi})\\
\Upsilon_{c}^n(\LL(\eta))(t)&\to\LL(Y^{\eta}_t)
\end{aligned}
\quad\text{in $\PP_2(\R^d)$, as $n\to\8$, for $t\geq0$}.
\]
Since $\LL(\xi)\leq\LL(\eta)$, it follows from Lemma \ref{lem:ldsde-comparison} that $\Upsilon_{b}^n(\LL(\xi))(t)\leq\Upsilon_{c}^n(\LL(\eta))(t)$ for all $t\geq0$, $n\in\Z_+$.
The conclusion is given by closedness of the stochastic order (Lemma \ref{lem:stochastic-order-closed}).
\end{proof}

If $b=c$, Assumption \ref{asp:comparison} is automatically satisfied. 
Hence, Proposition \ref{prop:continuity-Pt} and Proposition \ref{thm:mvsde-comparison} directly imply the following theorem.

\begin{theorem}\label{coro:mvsde-monotone}
Under Assumption \ref{asp:lipschitz}, \ref{asp:cooperation}, the following McKean-Vlasov SDE,
\[
\d X(t)=b(X(t),\LL(X(t)))\d t+\xs(X(t))\d W_t,
\]
generates a monotone semiflow $\str P_t$ on $(\PP_2(\R^d),\WW_2)$ with respect to the stochastic order.
\end{theorem}

\begin{remark}
Comparison results for McKean–Vlasov SDEs (with memory) have been studied in the literature; see, for instance, \cite{Huang-Liu-Wang2018, panpanren22}.
In this section, we present a direct proof based on the  classical comparison theorem for usual time-inhomogeneous SDEs  (\cite[Theorem 1.2]{Geib1994}).
Moreover, we show that under the cooperative  condition, a single McKean–Vlasov SDE generates a monotone semiflow with respect to the stochastic order.
\end{remark}

\section{Connecting Orbit Theorem in Monotone Dynamical Systems}\label{Section:Results-MDS}

Dancer-Hess connecting orbit theorem is a fundamental result in the theory of monotone dynamical systems, which is crucial in analyzing the existence of connecting orbits and further equilibria (fixed points) for strictly monotone semiflows (mappings) on a compact order interval $[a,b]$ contained in a Banach space with endpoints $a,b$ as two order-related equilibria (fixed points) (see \cite{DH91,Hess91,HSW96,M84,WFM95} for related results and applications). 
To the best of our knowledge, connecting orbit theorem for semiflows is obtained for strictly monotone semiflows on a compact order interval in a Banach space (see Hess \cite[Proposition 9.1]{Hess91} and Dancer-Hess \cite[Remark 1.2]{DH91}). 
However, the Banach space structure and strictly monotone requirement restrict the application of this theorem to the monotone semiflows on $\mathcal{P}_2(\mathbb{R}^d)$ generated by the cooperative McKean-Vlasov SDEs.
In this section, we are going to extend the classical connecting orbit theorem to monotone semiflows on a convex compact subset of a Hausdorff locally convex topological vector space (see Theorem \ref{T:semiflow-tri}).

\subsection{Preliminaries of dynamical systems on ordered spaces}\label{Subsection:Pre-MDS}
We first introduce some definitions and fundamental properties of general semiflows (Definition \ref{D:semiflow}).
Let $X$ be a complete metric space and  $\Phi: 
\mathbb{R_+}\times X\rightarrow X$ be a semiflow on $X$.
For any $x\in X$, the \emph{positive orbit} $O_+(x,\Phi)$ of $x$ is $\{\Phi_t(x): t\in\mathbb{R_+}\}$. 
A \emph{negative orbit} $O_-(x,\Phi)$ of $x$ is a net $\{x_{s}\}_{s\in\mathbb{R}_-}$ in $X$ such that $x_0=x$ and $\Phi_t(x_{s})=x_{t+s}$, for any $0\leq t\leq -s$. 
An \emph{entire orbit} $O(x,\Phi)$ of $x$ is a net $\{x_s\}_{s\in\mathbb{R}}$ in $X$ such that $\Phi_t(x_{s})=x_{t+s}$ for any $t\in\mathbb{R_+}$, $s\in \mathbb{R}$.
Notice that the negative orbit and entire orbit of a point $x$ may not exist or not be unique.
A point $x\in X$ is an \emph{equilibrium} of $\Phi$, if $\Phi_t(x)=x$ for any $t\geq0$. 

A subset $B\subset X$ is said to be \emph{positively invariant}, if $\Phi_t(B)\subset B$, for any $t\in\mathbb{R_+}$, and  \emph{invariant}, if $\Phi_t(B)=B$, for any $t\in\mathbb{R_+}$. 
It is not difficult to see that $B\subset X$ invariant is equivalent to, for any $x\in B$, there exists an entire orbit $\{x_t\}_{t\in\mathbb{R}}\subset B$ such that $x_0=x$.
For any set $B\subset X$, the \emph{$\omega$-limit set} of $B$ is defined by
$$\omega(B,\Phi):=\{y\in X:\text{ there exist sequences $t_k\to\infty$ and $y_k\in B$ such that }\mathop{\lim}\limits_{k \to\infty}\Phi_{t_k}(y_k)=y\}.$$
Now, we give a basic property of $\omega$-limit sets which is easy to check by definition, one may also see Hale \cite[Lemma 3.2.1]{Ha88}.

\begin{lemma}\label{L:omega-limit-set}
If $B$ is a  positively invariant compact subset of $X$, then $\omega(B,\Phi)$ is a nonempty invariant compact subset of $B$.
\end{lemma}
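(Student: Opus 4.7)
The plan is to verify the four properties—nonemptiness, inclusion in $B$, compactness, and invariance—of $\omega(B,\Phi)$ in order, exploiting only the positive invariance and compactness of $B$ together with the continuity axioms of a semiflow from Definition \ref{D:semiflow}.

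First I would establish nonemptiness. Pick any $x\in B$; by positive invariance $\Phi_n(x)\in B$ for every $n\in\N$. The compactness of $B$ gives a subsequence $\Phi_{n_k}(x)$ converging to some $y\in X$, and with $t_k:=n_k\to\infty$ and $y_k:=x\in B$, this $y$ lies in $\omega(B,\Phi)$ by definition. Next, for any $y\in\omega(B,\Phi)$, the approximants $\Phi_{t_k}(y_k)$ all lie in $B$ by positive invariance, and since $B$ is closed, the limit $y$ lies in $B$; hence $\omega(B,\Phi)\subset B$.

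For closedness of $\omega(B,\Phi)$ I would use a standard diagonal argument: given $y^{(n)}\to y$ with each $y^{(n)}\in\omega(B,\Phi)$, extract from the defining sequences of $y^{(n)}$ indices $k_n$ with $t^{(n)}_{k_n}>n$ and $d(\Phi_{t^{(n)}_{k_n}}(y^{(n)}_{k_n}),y^{(n)})<1/n$; then $\Phi_{t^{(n)}_{k_n}}(y^{(n)}_{k_n})\to y$ along $t^{(n)}_{k_n}\to\infty$, so $y\in\omega(B,\Phi)$. Since $\omega(B,\Phi)$ is a closed subset of the compact set $B$, it is compact.

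The main work is the invariance $\Phi_t(\omega(B,\Phi))=\omega(B,\Phi)$ for every $t\geq0$. The forward inclusion is straightforward: given $y\in\omega(B,\Phi)$ with $\Phi_{t_k}(y_k)\to y$, the joint continuity of $\Phi$ gives $\Phi_{t+t_k}(y_k)=\Phi_t(\Phi_{t_k}(y_k))\to\Phi_t(y)$, and $t+t_k\to\infty$, so $\Phi_t(y)\in\omega(B,\Phi)$. The harder direction, and the step I expect to be the main obstacle, is $\omega(B,\Phi)\subset\Phi_t(\omega(B,\Phi))$, because the semiflow is not required to be injective or to admit backward solutions. To overcome this, for $y\in\omega(B,\Phi)$ with $\Phi_{t_k}(y_k)\to y$, I would assume (after discarding finitely many indices) that $t_k>t$ and set $z_k:=\Phi_{t_k-t}(y_k)\in B$. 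Compactness of $B$ yields a subsequence $z_{k_j}\to z\in B$; since $t_{k_j}-t\to\infty$, we have $z\in\omega(B,\Phi)$. Using the semigroup property and joint continuity, $\Phi_t(z_{k_j})=\Phi_{t_{k_j}}(y_{k_j})\to y$ on the one hand, and $\Phi_t(z_{k_j})\to\Phi_t(z)$ on the other, so $\Phi_t(z)=y$. This produces the required preimage and completes the invariance.
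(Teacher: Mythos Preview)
Your proof is correct. The paper itself does not give a proof of this lemma: it simply remarks that the result ``is easy to check by definition'' and cites Hale \cite[Lemma 3.2.1]{Ha88}. Your detailed verification---nonemptiness via sequential compactness, inclusion in $B$ via positive invariance and closedness, compactness via a diagonal argument, and invariance by producing backward preimages through compactness of $B$---is precisely the direct argument the paper alludes to.
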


Next, we give some corresponding definitions for discrete-time dynamical systems, or named mappings.
For a continuous mapping $\Psi:X\to X$ and any $x\in X$, the \emph{positive orbit} $O_+(x,\Psi)$ of $x$ is the set $\{\Psi^n(x): n\in\mathbb{Z_+}\}$. A \emph{negative orbit} $O_-(x,\Psi)$ of $x$ is a sequence $\{x_{n}\}_{n\in\mathbb{Z_-}}$ in $X$ such that $x_0=x$ and $\Psi(x_n)=x_{n+1}$, for any $n\leq-1$. An \emph{entire orbit} $O(x,\Psi)$ of $x$  is a sequence $\{x_n\}_{n\in\mathbb{Z}}$ in $X$ such that $\Psi(x_n)=x_{n+1}$ for any $n\in\mathbb{Z}$. 
If $\Psi(x)=x$, we say $x$ is a \emph{fixed point} of $\Psi$.

Now, we further introduce some additional fundamental definitions and properties of ordered spaces.
Let $X$ be a complete metric space with a closed partial order relation ``$\leq$''. 
A \emph{totally ordered set} (which is also referred as \emph{chain}) $A$ in $X$ means that, any two points $x$ and $y$ in $A$  are order-related. 
A net $\{x_t\}_{t\in\T}\subset S$ ($\T=\mathbb{R},\mathbb{R}_+,\mathbb{R}_-,\mathbb{Z},\mathbb{Z}_+ \textnormal{or } \mathbb{Z}_-$) is called \emph{increasing} (\emph{decreasing}), if $x_t\leq x_{s}$ ($x_t\geq x_{s}$) for any $t,s\in\T$ with $t\leq s$.
Given $A\subset X$, $x\in A$ is said to be \emph{maximal} (\emph{minimal}) in $A$, if there is no point $y\in A$ such that $y>x$ ($y< x$). 
For a point $x\in X$ and a set $A\subset X$, we denote $x\leq A$ ($x\geq A$), if $x\leq y$ ($x\geq y$) for any $y\in A$. 
A point $x\in X$ is an \emph{upper bound} (\emph{lower bound}) of $A$ if $x\geq A$ ($x\leq A$).
An upper bound $x_0$ of $A$ is said to be the \emph{supremum} of $A$, denoted by $x_0=\sup A$, if any other upper bound $x$ satisfies $x\geq x_0$. 
Similarly, a lower bound $x_0$ of $A$ is said to be the \emph{infimum} of $A$, denoted by $x_0=\inf A$, if any other lower bound $x$ satisfies $x\leq x_0$. 
Clearly, if the supremum (infimum) of $A$ exists, it must be unique, but may not belongs to $A$.

\begin{lemma}\label{L:MCT}
Assume that $A\subset X$ is compact. Then,
\begin{enumerate}[label=\textnormal{(\roman*)}]
    \item every increasing (decreasing) net $\{x_t\}_{t\in\T}$ in $A$ converges as $t\to\infty$ and $t\to-\infty$;
    \item if $A$ is totally ordered, then $\sup A$ ($\inf A$) exists and $\sup A\in A$ ($\inf A\in A$);
    \item the set $A$ contains a maximal element and a minimal element;
    \item if a positive (negative) orbit $O_+(x,\Phi)$ ($O_-(x,\Phi)$) of a semiflow $\Phi$ on $X$ contained in $A$ is increasing or decreasing, then it must converge to an equilibrium as $t\to\infty$ ($t\to-\infty$).
    The corresponding result is also true for mappings.
\end{enumerate}
\end{lemma}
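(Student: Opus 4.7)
The four parts are linked, with (i) the linchpin. My plan is to establish (i) directly from compactness and closedness of the order, then deduce (ii) and (iii) by short order-theoretic arguments, and finally combine (i) with continuity of $\Phi$ to obtain (iv).

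For (i), consider an increasing net $\{x_t\}_{t\in\T}$ and the limit $t\to\8$ (the other cases are symmetric). Since $X$ is metric and $A$ is compact, every sequence $x_{t_n}$ with $t_n\to\8$ has a convergent subsequence with limit in $A$. The decisive step is uniqueness of limit points: if $x$ and $y$ arise from $t_n\to\8$ and $s_n\to\8$ respectively, I interleave by choosing $t_{k(n)}\geq s_n$, so monotonicity gives $x_{s_n}\leq x_{t_{k(n)}}$; passing to the limit in the closed relation $\leq$ yields $y\leq x$, and by symmetry $x\leq y$, so antisymmetry forces $x=y$. With a unique subsequential limit $x^*$, the full net must converge to $x^*$: otherwise some $\xe>0$ and $t_n\to\8$ would give $d(x_{t_n},x^*)\geq\xe$, and sequential compactness would produce a subsequential limit distinct from $x^*$, contradicting uniqueness.

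For (ii), the totally ordered compact set $A$ is itself an increasing net when indexed by its own order, and (i) produces a limit $x^*\in A$ (compactness of $A$ ensures $x^*\in A$). For each $a\in A$, the tail of the net lies in $[a,\cdot)$, so passing to the limit in $a\leq x_t$ for $t\geq a$ (and using closedness of $\leq$) shows $x^*$ is an upper bound; hence $x^*=\sup A$. The infimum statement follows by reversing the order. For (iii), I apply Zorn's Lemma: any chain $B\subset A$ has an upper bound in $A$, because the closure $\overline{B}$ is again totally ordered by closedness of $\leq$, and (ii) gives $\sup\overline{B}\in\overline{B}\subset A$, which bounds $B$. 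Zorn then yields a maximal element; the minimal one follows by duality.

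For (iv), the orbit $\{\Phi_t(x)\}_{t\in\R_+}\subset A$ is a monotone net in a compact set, so (i) gives $\Phi_t(x)\to x^*$ for some $x^*\in A$. Joint continuity of $\Phi$ combined with the semigroup property then yields, for each $s\geq0$,
\[
\Phi_s(x^*)=\Phi_s\Big(\lim_{t\to\8}\Phi_t(x)\Big)=\lim_{t\to\8}\Phi_{s+t}(x)=x^*,
\]
so $x^*$ is an equilibrium. The argument for mappings is identical with $\T=\Z_+$ in place of $\R_+$. The main obstacle is (i): closedness of $\leq$ is the structural hypothesis that forces uniqueness of subsequential limits in the monotone setting, and with that in hand the remaining parts follow essentially formally, without any linear or convex structure on $X$.
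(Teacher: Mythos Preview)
Your argument is correct and, for part (iv), coincides with the paper's own proof (the paper simply refers (i)--(iii) to Hirsch--Smith \cite[Lemma~1.1]{HS05} and writes out only (iv) in the same way you do). One small presentational slip: in (ii) you invoke (i) for the net $(a)_{a\in A}$ indexed by $A$ itself, but (i) as stated concerns nets indexed by $\T\in\{\R,\R_+,\R_-,\Z,\Z_+,\Z_-\}$. The fix is immediate---either note that your interleaving argument for (i) works verbatim for any totally ordered index set and cluster points in a compact space, or use that a compact metric set $A$ is separable, build an increasing sequence $b_n=\max\{a_1,\dots,a_n\}$ from a dense sequence $\{a_n\}\subset A$, apply (i) with $\T=\Z_+$, and use closedness of $\leq$ to see the limit dominates every $a\in A$. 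Similarly, in (iii) your claim that $\overline B$ is totally ordered deserves a one-line justification: for $x_n\to x$, $y_n\to y$ with $x_n,y_n\in B$, one of the relations $x_n\leq y_n$ or $y_n\leq x_n$ holds for infinitely many $n$, and closedness of $\leq$ then gives $x\leq y$ or $y\leq x$.
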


\begin{proof}
For the proof of items (i)-(iii), we refer to Hirsch-Smith \cite[Lemma 1.1]{HS05}. Here, we give the proof of (iv). We only prove the case for increasing negative orbit of a semiflow $\Phi$, as other cases are similar. Suppose $\{x_{s}\}_{s\in\mathbb{R}_-}$ is an increasing negative orbit, that is, $\Phi_t(x_{s})=x_{t+s}$, for any $0\leq t\leq -s$,  and $x_{s_1}\leq x_{s_2}$ for any $s_1\leq s_2\leq0$. By (i), we have $x_{s}$ converges to some point $x\in A$  as $s\to-\infty$. Then for any fixed $t\geq0$, $\Phi_t(x)=\Phi_t(\mathop{\lim}\limits_{s \to-\infty}x_{s})=\mathop{\lim}\limits_{s\to-\infty}\Phi_t(x_{s})=\mathop{\lim}\limits_{s\to-\infty}x_{t+s}=x$. Therefore, $x$ is an equilibrium.
\end{proof}

\subsection{Statement and proof of connecting orbit theorem}
Hereafter in this section, we fix the following settings.
\begin{enumerate}[label=(M\arabic*)]
\item $(V,\mathcal{T})$ is a Hausdorff locally convex topological vector space with a cone $C\subset V$, which induces a closed partial order relation $\leq$ on $V$ (as we introduced in Section \ref{Subsection:Properties-Order});
\item $(S,d)$ is a non-singleton convex compact metric subspace of $(V,\mathcal{T})$, where $d$ is a metric on $S$ inducing the relative topology on $S$;
\item $p:=\inf S$, $q:=\sup S$ exist and belong to  $S$;
\item $\Phi$ is a monotone semiflow on $S$;
\item $p$ and $q$ are equilibria of $\Phi$.
\end{enumerate}
In particular, (M1)(M2) imply $S$ is a complete metric space with closed partial order relation $\leq$.

In order to prove the connecting orbit theorem for monotone semiflows (Theorem \ref{T:semiflow-tri}), firstly we need a connecting orbit theorem for monotone mappings (see Lemma \ref{T:map-tri}). 
By invoking the fixed point index lemma for metrizable convex compact subsets of Hausdorff locally convex topological vector spaces (Lemma \ref{L:fixed pt index}), the proof of Lemma \ref{T:map-tri} proceeds exactly as in Dancer–Hess \cite[Proposition 1]{DH91} of Banach space setting. 
We give the detail in Appendix for the sake of completeness. 

\begin{lemma}\label{T:map-tri}
Assume that (M1)-(M3) hold. Let $\Psi:S\to S$ be a continuous monotone map.
If $p, q$ are two fixed points of $\Psi$, then at least one of the following holds:
\begin{enumerate}[label=\textnormal{(\alph*)}]
\item $\Psi$ has a fixed point distinct from $p, q$ in $S$;  
\item there exists an increasing entire orbit  $\{x_n\}_{n\in\mathbb{Z}}$ from $p$ to  $q$, i.e., $\Psi(x_n)=x_{n+1}$, $x_n\leq x_{n+1}$, for all $n\in\mathbb{Z}$, $x_n\to q$, as $n\to\infty$ and $x_n\to p$,
 as $n\to-\infty$; 
\item there exists a decreasing entire orbit  $\{x_n\}_{n\in\mathbb{Z}}$ from $q$ to $p$, i.e., $\Psi(x_n)=x_{n+1}$, $x_n\geq x_{n+1}$, for all $n\in\mathbb{Z}$, $x_n\to p$, as $n\to\infty$ and $x_n\to q$,
 as $n\to-\infty$.
\end{enumerate}
\end{lemma}

With the help of the consequence for mappings in Lemma \ref{T:map-tri}, we obtain our connecting orbit result for semiflows.
\begin{theorem}\label{T:semiflow-tri}
Assume that (M1)-(M5) hold. Then at least one of the following holds:
\begin{enumerate}[label=\textnormal{(\alph*)}]
    \item $\Phi$ has an equilibrium distinct from $p, q$ in $S$;
    \item there exists an increasing entire orbit  $\{x_t\}_{t\in\mathbb{R}}$ from $p$ to $q$, i.e., $\Phi_t(x_{s})=x_{t+s}$ for any $t\geq 0$, $s\in \mathbb{R}$, and $x_s\leq x_t$, for all $s\leq t$, and $x_t\to q$, as $t\to\infty$, and $x_t\to p$,
 as $t\to-\infty$;
 \item there exists a decreasing entire orbit  $\{x_t\}_{t\in\mathbb{R}}$ from $q$ to $p$, i.e., $\Phi_t(x_{s})=x_{t+s}$ for any $t\geq 0$, $s\in \mathbb{R}$, and $x_s\geq x_t$, for all $s\leq t$, and $x_t\to p$, as $t\to\infty$, and $x_t\to q$,
 as $t\to-\infty$.
\end{enumerate}
\end{theorem}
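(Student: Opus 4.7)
The strategy is to reduce Theorem~\ref{T:semiflow-tri} to its discrete counterpart Theorem~\ref{T:map-tri}, applied to the time-$\tau$ maps $\Psi_\tau := \Phi_\tau$ of the semiflow, and then upgrade the conclusion from discrete time to continuous time by letting $\tau \to 0$. For each $\tau > 0$, $\Psi_\tau \colon S \to S$ is continuous and monotone with $p$ and $q$ as fixed points, so Theorem~\ref{T:map-tri} returns one of the three alternatives (a), (b), (c) for $\Psi_\tau$.

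The central difficulty is that a fixed point of $\Psi_1$ need not be an equilibrium of $\Phi$, and the naive interpolation $y(t) := \Phi_{t - \lfloor t \rfloor}(x_{\lfloor t \rfloor})$ of a discrete orbit for $\Psi_1$ need not be monotone in $t$. To bridge this gap, I would apply Theorem~\ref{T:map-tri} to $\Psi_{1/n}$ for every $n \in \mathbb{N}$ and pass to a limit, exploiting compactness of $S$ together with the uniform continuity of $\Phi$ on the compact set $[0,1] \times S$.

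Suppose first that case (a) of Theorem~\ref{T:map-tri} holds for $\Psi_{1/n}$ along a subsequence $n_k \to \infty$, giving fixed points $z_{n_k} \in S \setminus \{p, q\}$. By compactness, extract $z_{n_k} \to z^*$. Since $\Phi_t(z_{n_k}) = z_{n_k}$ whenever $t \in (1/n_k)\mathbb{Z}_{+}$, and $\bigcup_k (1/n_k)\mathbb{Z}_{+}$ is dense in $[0, \infty)$, continuity of $\Phi$ forces $\Phi_t(z^*) = z^*$ for every $t \geq 0$, so $z^*$ is an equilibrium; if $z^* \neq p, q$, case (a) of Theorem~\ref{T:semiflow-tri} holds. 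Otherwise, after passing to a further subsequence, I may assume that case (b) of Theorem~\ref{T:map-tri} holds for $\Psi_{1/n}$ for every $n$ (case (c) is symmetric), yielding increasing discrete orbits $\{x^{(n)}_k\}_{k \in \mathbb{Z}}$ of step $1/n$ with $x^{(n)}_k \to q$ as $k \to \infty$ and $x^{(n)}_k \to p$ as $k \to -\infty$. I would re-index each orbit by a shift $k \mapsto k + s_n$ so that the shifted base point stays bounded away from both $p$ and $q$ in $S$, which is possible by taking $s_n$ to be the first index at which the orbit exits a fixed open neighbourhood of $p$ chosen to exclude $q$.

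Define the interpolation $\phi^{(n)}(t) := \Phi_{t - k/n}(x^{(n)}_{k + s_n})$ for $t \in [k/n,(k+1)/n]$. This is continuous in $t$ and, by a direct check using $\Phi_{1/n}(x^{(n)}_k) = x^{(n)}_{k+1}$ at interval boundaries, satisfies the semigroup identity $\Phi_t(\phi^{(n)}(s)) = \phi^{(n)}(t+s)$. Uniform continuity of $\Phi$ on $[0,1]\times S$ makes $\{\phi^{(n)}\}$ equicontinuous on every compact interval, so Arzel\`a--Ascoli together with a diagonal argument extracts a subsequence converging uniformly on compacts to a continuous $\phi:\mathbb{R}\to S$ inheriting the semigroup identity. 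The same uniform continuity implies $d(\phi^{(n)}(t), x^{(n)}_{\lfloor nt\rfloor + s_n}) \to 0$ uniformly in $t$; since the polygonal sequence $x^{(n)}_{\lfloor nt\rfloor + s_n}$ is increasing in $t$ for each $n$, closedness of the partial order on $V$ yields $\phi(s)\leq\phi(t)$ for $s\leq t$. Since $\phi$ is monotone and takes values in the compact set $S$, the limits $\phi(\pm\infty)$ exist by Lemma~\ref{L:MCT}(i), and letting $s\to\pm\infty$ in the semigroup identity shows both are equilibria of $\Phi$. The choice of shifts $s_n$ guarantees $\phi$ is nonconstant, so either $\phi(-\infty)\neq p$ or $\phi(+\infty)\neq q$ (giving a new equilibrium, case (a) of Theorem~\ref{T:semiflow-tri}), or else $\phi$ itself is the required increasing connecting orbit from $p$ to $q$ (case (b)); case (c) is symmetric. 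The hardest part is the delicate selection of the shifts $s_n$ to prevent $\phi$ from degenerating to a constant at $p$ or $q$, and the verification that monotonicity transfers through the Arzel\`a--Ascoli limit --- both ultimately relying on the compactness of $S$ and the closedness of the order induced by the cone $C$ in $V$.
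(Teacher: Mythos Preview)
Your overall strategy---reduce to Theorem~\ref{T:map-tri} for the time-$1/n$ maps and pass to the limit as $n\to\infty$---is the same starting point as the paper's, but your case analysis has a genuine gap. When alternative~(a) of Theorem~\ref{T:map-tri} holds for $\Psi_{1/n}$ along a subsequence and the extra fixed points $z_{n_k}$ accumulate at $p$ or $q$, your sentence ``Otherwise, after passing to a further subsequence, I may assume that case~(b) holds for $\Psi_{1/n}$ for every $n$'' is not justified: Theorem~\ref{T:map-tri} only asserts that \emph{at least one} of (a), (b), (c) holds, and its proof produces (b) or (c) only under the hypothesis that (a) fails. So if $\Psi_{1/n}$ has an extra fixed point near $p$ for every large $n$, nothing you have written forces (b) or (c) to hold for any $n$. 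The paper closes exactly this gap: assuming $\Phi$ has no equilibrium other than $p,q$, a short compactness argument (their Claim~2) shows that for all sufficiently small $t$ every fixed point of $\Phi_t$ lies in $B_S(p,\epsilon)\cup B_S(q,\epsilon)$; one then takes a \emph{maximal} fixed point $l_n$ of $\Phi_{t_n}$ in $\overline{B_S(p,1/n)}$ and a \emph{minimal} fixed point $u_n\geq l_n$ in $\overline{B_S(q,1/n)}$, and applies Theorem~\ref{T:map-tri} to $\Phi_{t_n}$ on the sub-interval $[l_n,u_n]_S$, where by construction (a) is impossible. Your argument would go through unchanged once you insert this step, since $l_n\to p$ and $u_n\to q$.

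With that gap filled, your route to the continuous entire orbit genuinely differs from the paper's. The paper never interpolates the discrete orbits or invokes Arzel\`a--Ascoli; instead it introduces the positively invariant set $S_l=\{x\in S:\Phi_t(x)\geq x\text{ for all }t\geq 0\}$, uses the discrete orbits only to manufacture points $z_i\in S_l\setminus\{p,q\}$ with $z_i\to p$ (via a discrete-to-continuous limiting lemma, their Claim~1), tracks the forward $\Phi$-orbits of the $z_i$ until they exit a fixed ball around $p$, and extracts the entire orbit from the invariance of the $\omega$-limit set $\omega(S_l,\Phi)$ together with Lemma~\ref{L:MCT}(iv). Your Arzel\`a--Ascoli construction is a correct and arguably more direct alternative: equicontinuity of the interpolants $\phi^{(n)}$ follows from the uniform continuity of $\Phi$ on the compact set $[0,1]\times S$, the semigroup identity and monotonicity pass to the limit by continuity of $\Phi$ and closedness of the cone $C$, and your shift device pins $\phi(0)$ away from $\{p,q\}$ exactly as needed. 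The paper's $\omega$-limit route is more in the spirit of classical monotone dynamics and avoids the diagonal extraction; yours is shorter once the interpolation is set up. Both are valid once the $l_n,u_n$ refinement is in place.
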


\begin{remark}
Compared to classical connecting orbit theorem for semiflows (see Hess \cite[Proposition 9.1]{Hess91} and Dancer-Hess \cite[Remark 1.2]{DH91}), the requirements on the additional Banach spaces structure and the strict monotonicity of systems are relaxed. 
They will play a crucial role in the study of the dynamics of cooperative McKean-Vlasov SDEs.
\end{remark}

\begin{proof}[Proof of Theorem \ref{T:semiflow-tri}]
Hereafter in the proof, for $x\in S$ and $\epsilon>0$, denote $B_S(x,\epsilon)=\{y\in S:d(x,y)<\epsilon\}$. 
For any subset $A\subset S$,  denote by  $\overline A$ and $\partial A$ the closure and boundary of $A$ relative to the topology on $S$, respectively.
For $x,y\in S$ with $x\leq y$, denote $[x,y]_S:=\{z\in S:x\leq z\leq y\}$ and $[x,y]_V:=\{z\in V:x\leq z\leq y\}$. Clearly, $[x,y]_S=[x,y]_V\cap S$.
Since $[x,y]_V$ is a convex closed subset of $V$ and $(S,d)$ is a convex compact metric subspace of $(V,\mathcal{T})$, one has $[x,y]_S$ is also a convex compact  metric subspace of $(V,\mathcal{T})$.

Assume that there is no further equilibrium of $\Phi$ distinct from $p, q$ in $S$. We are going to prove (b) or (c) holds. 

Define 
$$S_l:=\{x\in S:\Phi_t(x)\geq x,\text{ for any }t\geq0\}.$$
Then $S_l$ is compact, since $S_l\subset S$ is closed and $S$ is compact. Besides, the monotonicity of $\Phi$ entails that $S_l$ is positively invariant. Similarly, we can define 
$$S_u:=\{x\in S:\Phi_t(x)\leq x,\text{ for any }t\geq0\},$$
which is also a positively invariant compact subset of $S$. 
Lemma \ref{L:omega-limit-set} entails that the $\omega$-limit set  
$\omega(S_l,\Phi)$ (resp. $\omega(S_u,\Phi)$) is a nonempty invariant compact subset of $S_l$ (resp. $S_u$).
We claim that,
\begin{equation}\label{E:SlSu not empty}
   \text{either}\qquad \omega(S_l,\Phi)\backslash\{p,q\}\neq\emptyset \qquad\text{ or else,}\qquad \omega(S_u,\Phi)\backslash\{p,q\}\neq\emptyset.
\end{equation}
Before we prove \eqref{E:SlSu not empty}, we show that how it implies (b) or (c) holds in Theorem \ref{T:semiflow-tri}. In fact, if $\omega(S_l,\Phi)\backslash\{p,q\}\neq\emptyset$, take $y\in\omega(S_l,\Phi)\backslash\{p,q\}$. Since $\omega(S_l,\Phi)$ is invariant, there exists an entire orbit $\{y_t\}_{t\in\mathbb{R}}\subset\omega(S_l,\Phi)$ such that $y_0=y$. The fact that $\omega(S_l,\Phi)\subset S_l$ implies that, $y_s\leq y_t$ for all $s\leq t$. By virtue of Lemma \ref{L:MCT} (iv), $y_t$ converges to an equilibrium, as $t\to\infty$ and $t\to-\infty$ respectively. Since there is no further equilibrium of $\Phi$ distinct from $p, q$ in $S$, one has $y_t\to q$, as $t\to\infty$ and $y_t\to p$, as $t\to-\infty$. Thus, we have obtained (b) in Theorem  \ref{T:semiflow-tri}. Similarly, $\omega(S_u,\Phi)\backslash\{p,q\}\neq\emptyset$ implies (c) in Theorem  \ref{T:semiflow-tri}.

Now, we focus on the proof of \eqref{E:SlSu not empty}. Firstly, we give the following claims.

\noindent\textbf{Claim 1.} If $x_n\in S$, $x_n\to x$ as $n\to\infty$, $t_1>t_2>t_3>\cdots\to0$, and $\Phi_{t_n}x_n\geq (\text{resp. }\leq,=)x_n$ for any $n\geq1$, then $\Phi_t(x)\geq (\text{resp. }\leq,=) x$ for any $t\geq0$.

\noindent\textbf{Proof of Claim 1:}
We only prove the case $\geq$, as other cases are similar. In fact, for any fixed $t>0$, we have $t=k_{n,t}t_n+\tau_{n,t}$, $k_{n,t}\in\mathbb{N}$ and $\tau_{n,t}\in[0,t_n)$. Then,
\begin{equation*}
d(\Phi_t(x),\Phi_{k_{n,t}t_n}(x_n))\leq d(\Phi_t(x),\Phi_{\tau_{n,t}}(\Phi_{k_{n,t}t_n}(x_n)))+d(\Phi_{\tau_{n,t}}(\Phi_{k_{n,t}t_n}(x_n)),\Phi_{k_{n,t}t_n}(x_n))
\end{equation*}
By virtue of the continuity of $\Phi_t$, we have $d(\Phi_t(x),\Phi_{\tau_{n,t}}(\Phi_{k_{n,t}t_n}(x_n)))=d(\Phi_t(x),\Phi_t(x_n))\to0$, as $n\to\infty$.
Since $\tau_{n,t}\to0$, as $n\to\infty$, by the  joint continuity of the semiflow and the compactness of $S$, we have 
$d(\Phi_{\tau_{n,t}}(\Phi_{k_{n,t}t_n}(x_n)),\Phi_{k_{n,t}t_n}(x_n))\to0$, as $n\to\infty$.
Hence, $d(\Phi_t(x),\Phi_{k_{n,t}t_n}(x_n))\to0$, as $n\to\infty$.
By $\Phi_{t_n}x_n\geq x_n$ and $\Phi$ is monotone, one has $\Phi_{k_{n,t}t_n}x_n\geq x_n$. 
Then, the closedness of the partial order $\leq$,
(i.e., the cone $C$ which induces the partial order is closed), 
entails that $\Phi_t(x)\geq x$.
Thus, we have proved Claim 1.

\noindent\textbf{Claim 2.} For any $\epsilon>0$, there exists $\delta>0$ such that, if $\Phi_t(x)=x$ for some $t\in(0,\delta)$ and $x\in S$, then $x\in B_S(p,\epsilon)\cup  B_S(q,\epsilon)$.

\noindent\textbf{Proof of Claim 2:} Otherwise, there exists $\epsilon>0$, $t_1>t_2>t_3>\cdots\to0$, $x_n\in S$ such that $\Phi_{t_n}x_n=x_n$ and $x_n\notin B_S(p,\epsilon)\cup  B_S(q,\epsilon)$. Since $S$ is compact, we assume $x_{n_k}$ converges to some point $x\in S$ and $x\notin B_S(p,\epsilon)\cup  B_S(q,\epsilon).$ By virtue of Claim 1, we have $\Phi_t(x)=x$ for any $t\geq0$. That is, $x$ is an equilibrium of $\Phi$ distinct from $p, q$ in $S$, a contradiction. The proof of Claim 2 is completed.

 Now, take an integer $N_0\geq 1$ such that 
\begin{equation}\label{E:N0}
    B_S(p,\frac{1}{N_0})\cap B_S(q,\frac{1}{N_0})=\emptyset.
\end{equation}
 For any $n\geq N_0+1$, by Claim 2, there exists $t_n>0$ small enough such that, if $\Phi_{t_n}(x)=x$ for some $x\in S$, then 
\begin{equation}\label{E:1/n control}
    x\in B_S(p,\frac{1}{n})\cup B_S(q,\frac{1}{n}).
\end{equation}
 We can choose $t_n$ such that 
 \begin{equation}\label{E:tn to 0}
     1>t_{N_0+1}>t_{N_0+2}>t_{N_0+3}>\cdots\to0.
 \end{equation}
 
Now, denote the set of the fixed points of the mapping $\Phi_{t_n}$ by
$$E_{t_n}:=\{x\in S: \Phi_{t_n}(x)=x\}.$$
Since $E_{t_n}$ is closed and $S$ is compact, we have $E_{t_n}\cap\overline{B_S(p,\frac{1}{n})}$ is compact. Obviously, $p\in E_{t_n}\cap\overline{B_S(p,\frac{1}{n})}$, so it is not empty. Therefore, Lemma \ref{L:MCT} (iii) entails that, $E_{t_n}\cap\overline{B_S(p,\frac{1}{n})}$ 
  contains a maximal element $l_n$. Moreover, since $E_{t_n}\cap\overline{B_S(q,\frac{1}{n})}\cap[l_n,q]_S$ is compact, Lemma \ref{L:MCT} (iii) again implies that, it 
  contains a minimal element $u_n$. Since \eqref{E:N0} and $n\geq N_0+1$, we have $B_S(p,\frac{1}{n})\cap B_S(q,\frac{1}{n})=\emptyset$. Thus $l_n\neq u_n$. By the way $u_n$ is taken, we have $l_n\leq u_n$. So, $l_n<u_n$. Moreover, we claim that
  \begin{equation}\label{E:no fixed points in lnun}
      E_{t_n}\cap[l_n,u_n]_S=\{l_n,u_n\}.
  \end{equation}
  In fact, suppose on the contrary that there exists $x\in E_{t_n}\cap[l_n,u_n]_S$ and $x\neq l_n, u_n$. By \eqref{E:1/n control}, one has $x\in B_S(p,\frac{1}{n})$ or $x\in B_S(q,\frac{1}{n})$. If $x\in B_S(p,\frac{1}{n})$, then $x>l_n$ contradicts the way $l_n$ is taken. Otherwise, if $x\in B_S(q,\frac{1}{n})$, then $x<u_n$ contradicts the way $u_n$ is taken. Hence, we proved \eqref{E:no fixed points in lnun}.

  By \eqref{E:no fixed points in lnun}, we can apply Lemma \ref{T:map-tri} to the monotone map  $\Phi_{t_n}$ on  the convex compact subspace $[l_n,u_n]_S$, and obtain that (b) or (c) holds in Lemma \ref{T:map-tri}.  Without loss of generality, we assume that, for any $n\geq N_0+1$, (b) occurs in Lemma \ref{T:map-tri} for $\Phi_{t_n}$. That is, there exists an increasing entire orbit $\{x^j_n\}_{j\in\mathbb{Z}}$ from $l_n$ to $u_n$ for any $n\geq N_0+1$, i.e.,   $\Phi_{t_n}(x^j_n)=x^{j+1}_{n}$, $x^j_n\leq x^{j+1}_{n}$, for all $j\in\mathbb{Z}$, and $x^j_n\to u_n$, as $j\to\infty$ and $x^j_n\to l_n$,
 as $j\to-\infty$. We are going to prove $\omega(S_l,\Phi)\backslash\{p,q\}\neq\emptyset$ in \eqref{E:SlSu not empty} occurs (Otherwise, if there exists a subsequence $\{n_i\}_{i\geq 1}$ of $\{n\}_{n\geq N_0}$ such that (c) occurs in Lemma \ref{T:map-tri} for each $\Phi_{t_{n_i}}$ on $[l_{n_i},u_{n_i}]_S$, the proof of $\omega(S_u,\Phi)\backslash\{p,q\}\neq\emptyset$ in \eqref{E:SlSu not empty} is similar).

Next, we give the following claim.

\noindent\textbf{Claim 3.} There exists  a sequence $z_i\in S_l\backslash\{p,q\}$ such that $z_i\to p$ as $i\to\infty$.

\noindent\textbf{Proof of Claim 3:} By continuity of semiflow $\Phi$ and $p$ being an equilibrium of $\Phi$, there exists $\delta_i>0$ for $i\geq0$ such that
 \begin{equation}\label{E:delta i to 0}
     \frac{1}{N_0+1}>\delta_0>\delta_1>\delta_2>\cdots\to0,
 \end{equation}
 and
\begin{equation}\label{E:cts-control-semiflow}
  \Phi_t(B_S(p,\delta_i))\subset B_S(p,\delta_{i-1}),  \text{ for any   $t\in[0,1]$, $i\geq 1$}.
\end{equation}
Now, fix $i\geq 1$. Together with the fact that $\{x^j_n\}_{j\in\mathbb{Z}}$ is an increasing entire orbit from $l_n\in\overline{B_S(p,\frac{1}{n})}$ to $u_n\in\overline{B_S(q,\frac{1}{n})}$, \eqref{E:cts-control-semiflow} implies that, for any $n\geq N_0+1$ with $\frac{1}{n}<\delta_i$, there exists $j_{n,i}\in\mathbb{Z}$ such that $x^{j_{n,i}}_{n}\in B_S(p,\delta_{i-1})\backslash B_S(p,\delta_i)$. Since $S$ is compact, we can assume that $x^{j_{n_k,i}}_{n_k}$ converges to some point $z_i$ as $k\to\infty$. Clearly, $z_i\in \overline{B_S(p,\delta_{i-1})}\backslash B_S(p,\delta_i)$. By virtue of $\Phi_{t_{n_k}}(x^{j_{n_k,i}}_{n_k})\geq x^{j_{n_k,i}}_{n_k}$ and \eqref{E:tn to 0}, Claim 1 implies that $z_i\in S_l$. Hence, we proved Claim 3.

Finally, by Claim 3, we are going to prove $\omega(S_l,\Phi)\backslash\{p,q\}\neq\emptyset$, i.e.,  \eqref{E:SlSu not empty}. In fact, by the definition of $S_l$ and $z_i\in S_l\backslash\{p,q\}$, Lemma \ref{L:MCT} (iv) implies that $\Phi_t(z_i)$ converges to some equilibrium of $\Phi$ as $t\to\8$. Since there is no further equilibrium of $\Phi$ except $\{p,q\}$, one has for any $i\geq1$,  $\Phi_t(z_i)\to q$ as $t\to\infty$. On the other hand, \eqref{E:N0} entails that $q\notin B_S(p,\frac{1}{N_0})$. By \eqref{E:delta i to 0}, $z_i\in\overline{B_S(p,\delta_{i-1})}\subset B_S(p,\frac{1}{N_0+1})$, for any $i\geq1$. Since $d(\Phi_t(z_i),p)\to d(q,p)>\frac{1}{N_0+1}$ as $t\to\8$, by the continuity of $d(\Phi_t(z_i),p)$ with respect to $t$, for any $i\geq 1$, there exists $t_i>0$ such that 
\begin{equation}\label{E:1/N0+1}
    d(\Phi_{t_i}(z_i),p)=\frac{1}{N_0+1}.
\end{equation}
Since $S$ is compact, one can assume $\Phi_{t_{i_k}}(z_{i_k})\to v$, as $k\to\infty$.
By the continuity of the semiflow $\Phi$ on the compact space $S$ and the fact that $p$ is an equilibrium of $\Phi$, one has for any fixed time $T>0$, there exists $\delta_T>0$ such that, $\Phi_t(B_S(p,\delta_T))\subset B_S(p,\frac{1}{2(N_0+1)})$ for any $t\in[0,T]$. Together with the fact that $z_i\to p$ as $i\to\infty$ and \eqref{E:1/N0+1}, we have $t_i\to\infty$. Therefore, by definition of $\omega(S_l,\Phi)$, one has $v\in\omega(S_l,\Phi)$. In addition, $d(v,p)=\frac{1}{N_0+1}$ and $q\notin B_S(p,\frac{1}{N_0})$ give $v\neq p,q$. Hence, we proved $\omega(S_l,\Phi)\backslash\{p,q\}\neq\emptyset$, which completes our proof.
\end{proof}

\section{Order-Related Invariant Measures with Shrinking Neighbourhoods}\label{Section:Order-Related-Shrinking}

In this section, our attention returns to the McKean-Vlasov SDEs,
\begin{equation}\label{MVE}
\d X_t=b(X_t,\LL(X_t))\d t+\xs(X_t)\d W_t.
\end{equation}
By establishing the continuity, compactness, and monotonicity of a measure-iterating mapping (Proposition \ref{P:Psi-Cts-Cpt-Monotone}), and applying the comparison theorem, we prove the existence of order-related invariant measures  under  locally dissipative conditions (Theorem \ref{Thm: two ordered im}).
Furthermore, in  Theorem \ref{Thm: two ordered im} we show that these invariant measures possess shrinking neighbourhoods under the law evoluation semigroup $\str P$, which is essential for determining the direction of the connecting orbit in the proof of our main theorem.

\subsection{Properties of measure-iterating map}

For any fixed $\mu\in \mathcal{P}_2(\mathbb{R}^d)$, we consider the following SDE:
\begin{equation}\label{SDE 1}
\d X_t=b(X_t,\mu)\d t+\xs(X_t)\d W_t, \ \ \ t\geq 0.
\end{equation}
Under Assumption \ref{asp:lipschitz},  for any initial condition $x\in \mathbb{R}^d$, the unique solution $X_t^{\mu,x}$ of \eqref{SDE 1}  generates a Markov transition kernel and Markov semigroup by the following way:
\begin{equation*}
    P_t^{\mu}(x,\xG):=\mathbf{P}\{X_t^{\mu,x}\in \xG\}, \ \text{ for any } \xG\in \mathcal{B}(\mathbb{R}^d),
\end{equation*}
\begin{equation*}
    P_t^{\mu}f(x):=\int_{\mathbb{R}^d}f(y)P_t^{\mu}(x,\d y), \ \text{ for any } f\in B_b(\mathbb{R}^d),
\end{equation*}
where $B_b(\R^d)$ denotes the collections of  all bounded Borel-measurable functions from $\R^d$ to $\R$.
Also, we have the dual of $P_t^{\mu}$
\begin{equation*}
    P_t^{\mu,*}\nu:=\int_{\mathbb{R}^d}P_t^{\mu}(x,\cdot)\nu(\d x), \ \text{ for any } \nu\in \mathcal{P}(\mathbb{R}^d).
\end{equation*}
Under Assumption \ref{asp:lipschitz}, \ref{asp:dissipative-growth-nondegeneracy}, it is well-known that the SDE \eqref{SDE 1} has a unique invariant measure $\rho^{\mu}\in \PP_2(\R^d)$ (see e.g.,  Meyn-Tweedie \cite{Meyn-Tweedie1993-book,Meyn-Tweedie1993} or  Feng-Qu-Zhao \cite[Theorem 4.8]{Feng-Qu-Zhao2023}). Now define a measure-iterating map $\Psi: \PP_2(\R^d)\to \PP_2(\R^d)$ by
\begin{equation}\label{def of Psi}
    \Psi(\mu):=\rho^{\mu} \ \text{(the unique invariant measure of  \eqref{SDE 1})}.
\end{equation}
\noindent Notice that $\mu\in \PP_{2}(\R^d)$ is an invariant measure of \eqref{MVE} if and only if $\mu$ is a fixed point of $\Psi$. 
This fixed-point approach to studying invariant measures has been considered in the literature; see, for example,   \cite{Ahmed-Ding1993,Bao2022,Bao-Wang2025,Dawson1983,Zhang2023}. 
Here, to investigate the existence of multiple order-related invariant measures, we first study the following properties of  $\Psi$.

\begin{proposition}\label{P:Psi-Cts-Cpt-Monotone}
    Under Assumption \ref{asp:lipschitz}, \ref{asp:dissipative-growth-nondegeneracy}, the map $\Psi: \PP_2(\R^d)\to \PP_2(\R^d)$ is continuous and compact.
    Furthermore, if Assumption \ref{asp:cooperation} is also satisfied, then $\Psi$ is monotone, i.e., if $\mu_1,\mu_2\in \PP_2(\R^d)$ with $\mu_1\leq \mu_2$, we have $\Psi(\mu_1)\leq \Psi(\mu_2)$.
\end{proposition}

Before proceeding with the proof of Proposition \ref{P:Psi-Cts-Cpt-Monotone}, we need some necessary lemmas. Firstly, we have the following lemma from Hairer-Mattingly \cite[Theorem 1.3]{Hairer-Mattingly2011} or \cite[Lemma 2.2]{Feng-Qu-Zhao2023}.

 \begin{lemma}\label{Lem: HM}
     Assume that  $P(x,\xG)$, $x\in\R^d$, $\xG\in \mathcal{B}(\R^d)$ is a discrete Markovian transition kernel. If there exist a function $V: \R^d\to [0,\infty)$, a probability measure $\nu\in \mathcal{P}(\R^d)$ and non-negative constants $\tilde{\gamma}, \tilde{K}, \eta, R$ such that
	\begin{equation*}
		PV(x)\leq \tilde{\gamma} V(x)+\tilde{K} \ \text{ for all } x\in \R^d, \ \text{ and } \ \inf_{\{x:V(x)\leq R\}}P(x,\xG)\geq \eta\nu(\xG) \ \text{ for all } \xG\in\BB(\R^d),
	\end{equation*}
	then for any $\tilde{\beta}\geq 0$ and $\mu_1, \mu_2\in \mathcal{P}(\R^d)$, we have
	\begin{equation*}
		\rho_{\tilde{\beta},V}(P^*\mu_1, P^*\mu_2)\leq \zeta \rho_{\tilde{\beta},V}(\mu_1, \mu_2),
	\end{equation*}
	where
	\begin{equation*}
		\zeta=\max \bigg\{1-\eta+\tilde{\beta} \tilde{K}, \ \frac{2+\tilde{\beta}(\tilde{\gamma} R+2\tilde{K})}{2+\tilde{\beta} R}\bigg\},
	\end{equation*}
    and
    \begin{equation*}
		\rho_{\tilde{\beta},V}(\mu_1, \mu_2):=\sup_{\{f: |f|\leq 1+\tilde{\beta} V\}}\bigg|\int_{\R^d}f\d \mu_1-\int_{\R^d}f\d \mu_2\bigg|.
	\end{equation*} 
 \end{lemma}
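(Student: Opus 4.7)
The plan is to adapt the coupling proof of Harris-type theorems following Hairer-Mattingly to the weighted dual seminorm $\rho_{\beta,V}$. The first move is a Kantorovich-Rubinstein reformulation: introducing the lower semi-continuous semimetric
$$d_\beta(x,y) := \bigl(2 + \beta V(x) + \beta V(y)\bigr)\mathbf{1}_{\{x \neq y\}},$$
one checks that $\rho_{\beta,V}(\mu_1,\mu_2)$ coincides with the Wasserstein distance associated with $d_\beta$, namely
$$\rho_{\beta,V}(\mu_1,\mu_2) = \inf_{\pi \in \Pi(\mu_1,\mu_2)} \int_{\mathbb{X}\times\mathbb{X}} d_\beta\, \d\pi.$$
Consequently, to prove the contraction $\rho_{\beta,V}(P^*\mu_1, P^*\mu_2) \leq \zeta\, \rho_{\beta,V}(\mu_1, \mu_2)$ it is enough to construct, pointwise in $(x,y)$, a one-step coupling $Q_{x,y}$ of $P(x,\cdot)$ and $P(y,\cdot)$ satisfying $\int d_\beta\, \d Q_{x,y} \leq \zeta\, d_\beta(x,y)$, and then compose these kernels with an optimal coupling of $\mu_1, \mu_2$ to obtain a coupling of $P^*\mu_1, P^*\mu_2$ realizing the bound.

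The coupling is constructed by a case split on the threshold $R$. When $V(x) + V(y) > R$, I would take $Q_{x,y} := P(x,\cdot)\otimes P(y,\cdot)$; the Lyapunov drift gives
$$\int d_\beta\, \d Q_{x,y} \leq 2 + \beta\bigl(PV(x)+PV(y)\bigr) \leq 2 + \beta\gamma\bigl(V(x)+V(y)\bigr) + 2\beta K,$$
and a direct ratio estimate in the variable $s := V(x)+V(y) > R$ bounds this by $\frac{2+\beta(\gamma R+2K)}{2+\beta R}\, d_\beta(x,y)$, which is the second entry in $\zeta$. When $V(x) + V(y) \leq R$, both $x$ and $y$ lie in the small set $\{V \leq R\}$, so $P(x,\cdot) \wedge P(y,\cdot) \geq \eta\nu$; I would then use the maximal coupling: with probability $\eta$ draw a common sample from $\nu$ (making $X = Y$), otherwise sample independently from the residual measures $(P(x,\cdot)-\eta\nu)/(1-\eta)$ and $(P(y,\cdot)-\eta\nu)/(1-\eta)$. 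Combining $\mathbf{P}(X \neq Y) \leq 1-\eta$ with the drift bound on the residual $V$-expectations produces an upper bound of the form $(1-\eta+\beta K)\, d_\beta(x,y)$, the first entry in $\zeta$. Taking the larger of the two constants yields the stated $\zeta$.

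The main obstacle is the tight algebraic bookkeeping in the small-set regime: the residual expectations of $V$ (obtained from the identity $(1-\eta)\mathbf{E}_\text{res}[V(X)] = PV(x) - \eta \int V\,\d\nu$) must be balanced against the factor $1-\eta$ in front of the two expectations so that the contracting factor is precisely $1-\eta+\beta K$ rather than a weaker constant with leftover additive terms; this requires isolating the $\eta$-proportion of mass that is exactly coupled from the remainder on which the drift is still active. A secondary technical issue is that $d_\beta$ is only lower semi-continuous, so the Kantorovich duality must be invoked in its version for lower semi-continuous costs, e.g., by approximation with bounded continuous functions, exactly as handled in Hairer-Mattingly \cite{Hairer-Mattingly2011}; all that then remains is to verify that our choice of coupling produces a measurable kernel, which follows from standard selection arguments.
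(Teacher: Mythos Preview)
The paper does not prove this lemma at all: it simply cites Theorem~1.3 of Hairer--Mattingly \cite{Hairer-Mattingly2011} and Lemma~2.2 of Feng--Qu--Zhao \cite{Feng-Qu-Zhao2023}. Your proposal is precisely the Hairer--Mattingly coupling argument that those references use, so you are reconstructing the proof the paper defers to.

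One small correction to your sketch: in the small-set regime $V(x)+V(y)\le R$, the bound you obtain after the drift estimate is
\[
2(1-\eta)+2\beta K+\beta\gamma\bigl(V(x)+V(y)\bigr),
\]
and the ratio of this to $d_\beta(x,y)=2+\beta(V(x)+V(y))$ is a monotone function of $s:=V(x)+V(y)\in[0,R]$ whose maximum is either $1-\eta+\beta K$ (attained at $s=0$) or at most $\frac{2+\beta(\gamma R+2K)}{2+\beta R}$ (attained at $s=R$), depending on the sign of $\gamma-(1-\eta+\beta K)$. So the small-set case does not always give the first entry of $\zeta$ as you wrote; rather both regimes are controlled by $\zeta=\max\{\cdots\}$, which is why $\zeta$ is defined as a maximum. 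This is exactly the ``tight algebraic bookkeeping'' you flagged, and with this adjustment your plan goes through.
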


Before stating the next lemma, we introduce a new metric on $\PP_p(\R^d)$. 
For any $p\geq 1$ and $\mu,\nu\in \mathcal{P}_p(\mathbb{R}^d)$, we give the following weighted total variation
\begin{equation}\label{0322-1}
    d_{p}(\mu, \nu):=\sup_{\{f: |f(x)|\leq 1+|x|^p\}}\bigg\{\bigg|\int_{\mathbb{R}^d}f \d \mu-\int_{\mathbb{R}^d}f \d \nu\bigg|\bigg\}=\int_{\mathbb{R}^d}(1+|x|^p)\d |\mu-\nu|(x).
\end{equation}
It is easy to check that $(\PP_p(\R^d),d_p)$ is a complete metric space. 

Inspired by \cite[Theorem 4.8]{Feng-Qu-Zhao2023}, we have the following lemma.
\begin{lemma}\label{Thm: esti of invari measure}
    Under Assumption \ref{asp:lipschitz}, \ref{asp:dissipative-growth-nondegeneracy}, for any fixed $\mu\in \mathcal{P}_2(\mathbb{R}^d)$, we have
    \begin{enumerate}[label=\textnormal{(\roman*)}]
        \item for any $p\geq 2$, there exist $C, \lambda>0$ depending only on $(K,\alpha,\beta,\gamma,\underline{\xs},\overline{\xs},\kappa,d,p)$ and $\|\mu\|_2$ such that 
        \begin{equation}\label{converge to im}
           d_{p}(P_t^{\mu}(x,\cdot), \Psi(\mu))\leq C(1+|x|^p)e^{-\lambda t},\ \ x\in\R^d, \ t\geq0;
        \end{equation} 
        \item $\Psi(\mu)$ has the following estimates:
        \begin{equation}\label{uniform bounds of im}
        \|\Psi(\mu)\|_p^p\leq 
            \begin{cases}
                \frac{\beta}{\alpha}\|\mu\|_2^2+\frac{2\gamma+\overline{\xs}d}{2\alpha}, & p=2,\\
                \frac{2}{p\alpha}\Big(\frac{p-2}{p\alpha}\Big)^{\frac{p-2}{2}}\big(2\beta\|\mu\|_2^2+2\gamma+\overline{\xs}(d+p-2)\big)^{\frac{p}{2}}, & p>2.
            \end{cases}
        \end{equation}
    \end{enumerate}
\end{lemma}
\begin{proof}
    (i). Let $V_p(x):=|x|^p$, $p\geq2$. Firstly we consider the case $p=2$. Applying It\^o's formula to $|X_t^{\mu,x}|^2$, we have
    \begin{equation}\label{est im 1}
        \begin{split}
            \d |X_t^{\mu,x}|^2&=\big(2\inp{X_t^{\mu,x}}{b(X_t^{\mu,x},\mu)}+\abs{\xs(X_t^{\mu,x})}^2\big)\d t+2\inp{X_t^{\mu,x}}{\xs(X_t^{\mu,x})\d W_t}\\
            &\leq \big(-2\alpha |X_t^{\mu,x}|^2+2\beta \|\mu\|_2^2+2\gamma+\overline{\xs}d\big)\d t+2\inp{X_t^{\mu,x}}{\xs(X_t^{\mu,x})\d W_t}.
        \end{split}
    \end{equation}
    Then we have
    \begin{equation*}
        \begin{split}
            \d e^{2\alpha t}|X_t^{\mu,x}|^2
            \leq e^{2\alpha t}\big(2\beta \|\mu\|_2^2+2\gamma+\overline{\xs}d\big)\d t+2e^{2\alpha t}\inp{X_t^{\mu,x}}{\xs(X_t^{\mu,x})\d W_t}.
        \end{split}
    \end{equation*}
    Hence
    \begin{equation}\label{ineq p=2}
        \begin{split}
            P_t^{\mu}V_2(x)=\mathbf{E}[|X_t^{\mu,x}|^2]\leq e^{-2\alpha t}V_2(x)+\frac{\beta}{\alpha}\|\mu\|_2^2+\frac{2\gamma+\overline{\xs}d}{2\alpha}.
        \end{split}
    \end{equation}
    In the case of $p>2$, It\^o's formula to $|X_t^{\mu,x}|^p$ together with \eqref{est im 1} gives
    \begin{equation}\label{Ito for p-th}
        \begin{split}
            \d |X_t^{\mu,x}|^p&=\frac{p}{2}|X_t^{\mu,x}|^{p-2}\d |X_t^{\mu,x}|^2+\frac{1}{2}\frac{p}{2}\Big(\frac{p}{2}-1\Big)|X_t^{\mu,x}|^{p-4}\d \inp{|X^{\mu,x}|^2}{|X^{\mu,x}|^2}_t\\
            &\leq -p\alpha |X_t^{\mu,x}|^p\d t+\Big(p\beta \|\mu\|_2^2+p\gamma+\frac{p\overline{\xs}(d+p-2)}{2}\Big)|X_t^{\mu,x}|^{p-2}\d t\\
            &\ \ \ \ +p|X_t^{\mu,x}|^{p-2}\inp{X_t^{\mu,x}}{\xs(X_t^{\mu,x})\d W_t}\\
            &\leq -\frac{p\alpha}{2}|X_t^{\mu,x}|^p\d t+\Big(\frac{p-2}{p\alpha}\Big)^{\frac{p-2}{2}}\big(2\beta\|\mu\|_2^2+2\gamma+\overline{\xs}(d+p-2)\big)^{\frac{p}{2}}\d t\\
            &\ \ \ \ +p|X_t^{\mu,x}|^{p-2}\inp{X_t^{\mu,x}}{\xs(X_t^{\mu,x})\d W_t},
        \end{split}
    \end{equation}
    where in the last inequality, we have used the following Young's inequality
    \[
    \Big(p\beta \|\mu\|_2^2+p\gamma+\frac{p\overline{\xs}(d+p-2)}{2}\Big)|X_t^{\mu,x}|^{p-2}\leq \frac{p\alpha}{2}|X_t^{\mu,x}|^p+\Big(\frac{2}{p}\cdot\frac{p-2}{p\alpha}\Big)^{\frac{p-2}{2}}\big(2\beta\|\mu\|_2^2+2\gamma+\overline{\xs}(d+p-2)\big)^{\frac{p}{2}}.
    \]
    Similarly, we have
    \begin{equation}\label{ineq p>2}
        \begin{split}
            P_t^{\mu}V_p(x)=\mathbf{E}[|X_t^{\mu,x}|^p]\leq e^{-\frac{p\alpha}{2} t}V_p(x)+\frac{2}{p\alpha}\Big(\frac{p-2}{p\alpha}\Big)^{\frac{p-2}{2}}\big(2\beta\|\mu\|_2^2+2\gamma+\overline{\xs}(d+p-2)\big)^{\frac{p}{2}}.
        \end{split}
    \end{equation}

    On the other hand, by \cite[Theorem 3.10]{Feng-Qu-Zhao2023}, the density $p^{\mu}_t(x,y)$ of the transition kernel $P^{\mu}_t(x,\cdot)$ has the following lower bound estimation: there exist $\eta_1,\eta_2,\eta_3>0$ depending only on $(K,\alpha,\beta,\gamma,\underline{\xs},\overline{\xs},\kappa,d)$ and $\norm{\mu}_2$, such that for all $0<t\leq1$,
    \begin{equation}\label{E:lower bound of transition kernel}
        p^{\mu}_t(x,y)\geq \eta_1 t^{-\frac{d}{2}}\exp\{-\eta_2(1+|x|^{2(d+1)\kappa})(1+|x-y|^{2\kappa})-\eta_3t^{-1}(1+|x-y|^2)\}.
    \end{equation}
    Hence for any $M>0$ and any $0<t\leq1$,
    \begin{equation*}
        \begin{split}
            \inf_{\{(x,y): |x|\leq M, |y|\leq 1\}}p^{\mu}_t(x,y)&\geq \eta_1 t^{-\frac{d}{2}}\exp\{-\eta_2(1+M^{2(d+1)\kappa})(1+(M+1)^{2\kappa})\\
            &\no{\geq \eta_1 t^{-\frac{d}{2}}\exp\{-\eta_2(1+M}
            -\eta_3t^{-1}(1+(M+1)^2)\}\\
            &=:\bar{\eta}(M,t)>0.
        \end{split}
    \end{equation*}
    Let $B_1$ be the unit ball in $\R^d$ and $\mathrm{Leb}(\cdot)$ be the Lebesgue measure on $\R^d$. Then $\nu(\cdot):=\mathrm{Leb}(\cdot\cap B_1)/\mathrm{Leb}(B_1)$ is a probability measure and
    \begin{equation*}
        \inf_{\{x: |x|\leq M\}}P^{\mu}_t(x,\xG)\geq \bar{\eta}(M,t)\mathrm{Leb}(B_1)\nu(\xG) \ \text{ for all } \xG\in\BB(\R^d).
    \end{equation*}
    Together with \eqref{ineq p=2}, \eqref{ineq p>2}, for any $p\geq 2$ and $R>0$, we have
    \begin{equation*}
        P_1^{\mu}V_p\leq \gamma_pV_p+K_p, \ \text{ and } \ \inf_{\{x: V_p(x)\leq R\}}P^{\mu}_1(x,\xG)\geq \eta_{p,R}\nu(\xG) \ \text{ for all } \xG\in\BB(\R^d),
    \end{equation*}
    where $\eta_{p,R}=\bar{\eta}(R^{1/p},1)\mathrm{Leb}(B_1)$ and
    \begin{equation*}
        \gamma_p=
        \begin{cases}
            e^{-2\alpha}, & p=2,\\
            e^{-\frac{p\alpha}{2}}, & p>2,
        \end{cases}
        \ \text{ and  } \ K_p=
        \begin{cases}
            \frac{\beta}{\alpha}\|\mu\|_2^2+\frac{2\gamma+\overline{\xs}d}{2\alpha}, & p=2,\\
            \frac{2}{p\alpha}\Big(\frac{p-2}{p\alpha}\Big)^{\frac{p-2}{2}}\big(2\beta\|\mu\|_2^2+2\gamma+\overline{\xs}(d+p-2)\big)^{\frac{p}{2}}, & p>2.
        \end{cases}
    \end{equation*}
    Now choose
    \begin{equation*}
       R_p=\frac{3K_p}{1-\gamma_p}>\frac{2K_p}{1-\gamma_p}, \ \text{ and } \ \beta_p=\min\Big\{\frac{\eta_{p,R_p}}{2K_p}, 1\Big\}<\frac{\eta_{p,R_p}}{K_p},
    \end{equation*}
    then Lemma \ref{Lem: HM} yields that for any $\mu_1,\mu_2\in \PP_p(\R^d)$,
    \begin{equation}\label{0111-1}
       \rho_{\beta_p,V_p}(P_1^{\mu,*}\mu_1,P_1^{\mu,*}\mu_2)\leq \zeta_p\rho_{\beta_p,V_p}(\mu_1,\mu_2),
    \end{equation}
    where
    \begin{equation*}
       \zeta_p=\max \bigg\{1-\eta_{p,R_p}+\beta_p K_p, \ \frac{2+\beta_p(\gamma_p R_p+2K_p)}{2+\beta_p R_p}\bigg\}<1.
    \end{equation*}
    From $\eqref{ineq p=2}$, \eqref{ineq p>2}, for any $t\geq 0$, one also has
    \begin{equation}\label{E:lya for all t}
        P_t^{\mu}V_p\leq V_p+K_p.
    \end{equation}
    Note that
   $\inf_{\{x: V_p(x)\leq R_p\}}P^{\mu}_t(x,\xG)\geq 0$   for all $t\geq0$ and $\xG\in\BB(\R^d)$.
    Then it follows from Lemma \ref{Lem: HM} again that for any $t\geq 0$,
    \begin{equation}\label{0111-2}
       \rho_{\beta,V_p}(P_t^{\mu,*}\mu_1,P_t^{\mu,*}\mu_2)\leq \tilde{\zeta}_p\rho_{\beta_p,V_p}(\mu_1,\mu_2),
    \end{equation}
    where
    \begin{equation*}
       \tilde{\zeta}_p=\max \bigg\{1+\beta_p K_p, \ \frac{2+\beta_p(R_p+2K_p)}{2+\beta_p R_p}\bigg\}<\infty.
    \end{equation*}
    By \eqref{0111-1} and \eqref{0111-2}, we have for any $t\geq 0$ and $\mu_1,\mu_2\in \PP_p(\R^d)$,
    \begin{equation*}
       \rho_{\beta_p,V_p}(P_t^{\mu,*}\mu_1,P_t^{\mu,*}\mu_2)\leq \frac{\tilde{\zeta}_p}{\zeta_p}e^{(\log\zeta_p)t}\rho_{\beta_p,V_p}(\mu_1,\mu_2).
    \end{equation*}
    Note that $\beta_p\leq 1$, by the definition of $d_p$ and $\rho_{\beta_p,V_p}$ we have
    \begin{equation*}
       \beta_p d_p(\mu_1,\mu_2)\leq \rho_{\beta_p,V_p}(\mu_1,\mu_2)\leq d_p(\mu_1,\mu_2),
    \end{equation*}
    then let $\lambda_p:=-\log \zeta_p>0$, we have for all $t\geq 0$ and $\mu_1,\mu_2\in \PP_p(\R^d)$,
    \begin{equation}\label{E:synchro}    d_p(P_t^{\mu,*}\mu_1,P_t^{\mu,*}\mu_2)\leq \frac{\tilde{\zeta}_p}{\beta_p\zeta_p}e^{-\lambda_p t}d_p(\mu_1,\mu_2).
    \end{equation}
    Hence, SDE \eqref{SDE 1} has at most one invariant measure in $\PP_p(\R^d)$. 
    Therefore, by \eqref{0322-1}, \eqref{E:lya for all t}, and \eqref{E:synchro}, for any $x\in \R^d$ and $s\geq t\geq 0$ we have
    \begin{equation}\label{0111-3}
       \begin{split}
           d_p(P_t^{\mu}(x,\cdot),P_s^{\mu}(x,\cdot))&=d_p(P_t^{\mu,*}\delta_x,P_t^{\mu,*}P_{s-t}^\mu(x,\cdot))\\
           &\leq \frac{\tilde{\zeta}_p}{\beta_p\zeta_p}e^{-\lambda_p t}\big(2+|x|^p+\mathbf{E}[|X_{s-t}^{\mu,x}|^p]\big)\\
           &\leq \frac{\tilde{\zeta}_p(2+K_p)}{\beta_p\zeta_p}(1+|x|^p)e^{-\lambda_p t}.
       \end{split}
    \end{equation}
    Then $\{P^{\mu}_s(x,\cdot)\}_{s\geq 0}$ is a Cauchy net in the complete metric space $(\PP_p(\R^d),d_p)$ and its limit is an (and hence the unique) invariant measure of \eqref{SDE 1} in $\PP_p(\R^d)$. Since $\PP_p(\R^d)$ is decreasing as $p$ increases, the invariant measures in $\PP_p(\R^d)$ are the same for all $p\geq2$. We obtain \eqref{converge to im} by letting $s\to \infty$ in \eqref{0111-3}.

    (ii). By \eqref{0322-1}, \eqref{converge to im}, 
    one has 
    \begin{equation*}
        \|\Psi(\mu)\|_p^p=\lim_{t\to \infty}\|P^{\mu}_t(0,\cdot)\|_p^p=\lim_{t\to \infty}P^{\mu}_tV_p(0), \text{ for all } p\geq2.
    \end{equation*}
    Therefore, by \eqref{ineq p=2}, \eqref{ineq p>2}, we have 
    \begin{equation*}
        \|\Psi(\mu)\|_2^2\leq \frac{\beta}{\alpha}\|\mu\|_2^2+\frac{2\gamma+\overline{\xs}d}{2\alpha},
    \end{equation*}
    and for $p>2$,
    \begin{equation*}
        \|\Psi(\mu)\|_p^p\leq \frac{2}{p\alpha}\Big(\frac{p-2}{p\alpha}\Big)^{\frac{p-2}{2}}\big(2\beta\|\mu\|_2^2+2\gamma+\overline{\xs}(d+p-2)\big)^{\frac{p}{2}}.
    \end{equation*}
\end{proof}

\begin{remark}\label{R:common converge speed in bdd set}
In fact, \cite[Theorem 3.10]{Feng-Qu-Zhao2023} shows that for all $\mu$ in a bounded subset of  $\PP_2(\R^d)$, there exist common constants $\eta_1,\eta_2,\eta_3>0$ such that \eqref{E:lower bound of transition kernel} holds.
Therefore, from the definition of $K_p$, $\beta_p$, $\zeta_p$, $\tilde{\zeta}_p$, $\lambda_p$ in the proof of Lemma \ref{Thm: esti of invari measure}, the conclusion of Lemma \ref{Thm: esti of invari measure} (i) can be strengthened as: 
for any $p\geq 2$, there exist constants $C, \lambda>0$ depending only on $K$, $\alpha$, $\beta$, $\gamma$, $\underline{\xs}$, $\overline{\xs}$, $\kappa$, $d$, $p$ and $\tilde{R}>0$ such that \eqref{converge to im} holds for any $\mu\in\overline{B_{\PP_2}(\xd_0,\tilde{R})}$, $x\in\R^d$ and $t\geq0$, where  
$\overline{B_{\PP_2}(\delta_0,\tilde{R})}:=\{\mu\in \PP_2(\R^d): \WW_2(\mu,\delta_0)=\|\mu\|_2\leq \tilde{R}\}.$
\end{remark}

Next, we give a compact embedding lemma between Wasserstein spaces,  which will also be used in Section \ref{Section:Proof-Main-Results} when verifying the compactness of the order interval enclosed by two order-related $\PP_{\8}-$invariant measures.

\begin{lemma}\label{lem:compact-embed}
For any $p>q\geq 1$, the map $\PP_p(\R^d)\hookrightarrow \PP_q(\R^d)$ is a compact embedding, i.e., any bounded set in $(\PP_p(\R^d),\WW_p)$ is pre-compact in $(\PP_q(\R^d),\WW_q)$.
\end{lemma}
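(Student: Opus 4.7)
The plan is to combine tightness (to extract a weakly convergent subsequence via Prokhorov's theorem) with uniform integrability of $|x|^q$ (to upgrade weak convergence to $\WW_q$-convergence). Let $\{\mu_n\}_{n\in\N}\subset\PP_p(\R^d)$ be a sequence with $M:=\sup_n\norm{\mu_n}_p<\8$.

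First I would establish tightness. By Markov's inequality,
\[
\mu_n(\{x\in\R^d:\abs x>R\})\leq\frac{1}{R^p}\int_{\R^d}\abs x^p\d\mu_n(x)\leq\frac{M^p}{R^p}\goto{R\to\8}0
\]
uniformly in $n$. Hence $\{\mu_n\}$ is tight in $\PP(\R^d)$, and by Prokhorov's theorem there exists a subsequence (still denoted $\mu_n$) and some $\mu\in\PP(\R^d)$ with $\mu_n\goto{w}\mu$. The uniform $p$-moment bound together with Fatou's lemma yields $\norm{\mu}_p\leq M$, so in particular $\mu\in\PP_q(\R^d)$.

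Next I would upgrade the weak convergence to convergence in $\WW_q$. By Villani \cite[Theorem 6.9]{Villani2009}, given weak convergence $\mu_n\goto{w}\mu$, convergence $\WW_q(\mu_n,\mu)\to0$ is equivalent to the uniform integrability of $\abs x^q$ against $\{\mu_n\}$, namely
\[
\lim_{R\to\8}\sup_{n\in\N}\int_{\{\abs x>R\}}\abs x^q\d\mu_n(x)=0.
\]
Since $p>q$, a direct estimate gives
\[
\int_{\{\abs x>R\}}\abs x^q\d\mu_n(x)\leq R^{q-p}\int_{\{\abs x>R\}}\abs x^p\d\mu_n(x)\leq R^{q-p}M^p\goto{R\to\8}0
\]
uniformly in $n$. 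Therefore $\WW_q(\mu_n,\mu)\to0$, which proves the compact embedding.

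There is no serious obstacle here; the only substantive ingredient is the Villani characterization of Wasserstein convergence, and the rest is a standard tightness-plus-uniform-integrability argument. The role of the strict inequality $p>q$ appears precisely in the estimate $R^{q-p}\to0$, which is what forces the tails to vanish uniformly.
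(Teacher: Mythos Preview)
Your proof is correct and follows essentially the same approach as the paper: tightness via the uniform $p$-th moment bound, Prokhorov to extract a weak limit, and Villani's Theorem~6.9 to upgrade to $\WW_q$-convergence. The only cosmetic difference is that you invoke the uniform-integrability form of Villani's characterization directly, whereas the paper verifies the equivalent moment-convergence form $\int|x|^q\d\mu_n\to\int|x|^q\d\mu$ via a truncation argument; your route is slightly shorter.
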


\begin{proof}
It suffices to show that any sequence $\{\mu_n\}_{n\geq 1}$ with $R:=\sup_{n\geq 1}\|\mu_n\|_p<\infty$ has a Cauchy subsequence in $\PP_q(\R^d)$. For any $N>0$, let $B_N=B(0,N)$ be the open ball centered at $0$ in $\R^d$ with radius $N$. By Chebyshev's inequality, 
\begin{equation}\label{E:tight}
    \sup_{n\geq 1}\mu_n(B_N^c)\leq \sup_{n\geq 1}\frac{1}{N^p}\int_{B_N^c}|x|^p\d \mu_n(x)\leq \frac{R^p}{N^p}.
\end{equation}
Then $\{\mu_n\}_{n\geq 1}$ is tight and hence weakly pre-compact. Thus there exist a subsequence of $\{\mu_n\}_{n\geq 1}$ which is still denoted by $\{\mu_n\}_{n\geq 1}$ and some $\mu\in \PP(\R^d)$ such that $\mu_n\xrightarrow{w} \mu$, i.e.,
\begin{equation*}
    \lim_{n\to \infty}\int_{\R^d}f(x)\d \mu_n(x)= \int_{\R^d}f(x)\d \mu(x) \ \text{ for all bounded continuous function  $f$}.
\end{equation*}
Choose $f_N(x)=|x|^p\wedge N$. Since $\mu_n\goto{w}\mu$, we have
\begin{equation}\label{1115-1}
    \int_{\R^d}f_N(x)\d \mu(x)= \lim_{n\to \infty}\int_{\R^d}f_N(x)\d \mu_n(x)\leq \sup_{n\geq 1}\int_{\R^d}|x|^p\d \mu_n(x)= R^p.
\end{equation}
Note that $f_N(x)\uparrow |x|^p$ as $N\to \infty$. Then the monotone convergence theorem yields that
\begin{equation*}
    \int_{\R^d}|x|^p\d \mu(x)=\lim_{N\to \infty}\int_{\R^d}f_N(x)\d \mu(x)\leq R^p.
\end{equation*}
Hence $\mu\in \PP_p(\R^d)\subset \PP_q(\R^d)$. 
Then by Villani \cite[Theorem 6.9]{Villani2009}, to show $\lim_{n\to \infty}\WW_q(\mu_n,\mu)=0$, it suffices to prove
\begin{equation}\label{1115-2}
    \lim_{n\to \infty}\int_{\R^d}|x|^q\d \mu_n(x)= \int_{\R^d}|x|^q\d \mu(x).
\end{equation}
For any $N>0$, $\mu_n\goto{w}\mu$ implies
\begin{equation}\label{E:mun-weak-mu}
    \lim_{n\to \infty}\int_{\R^d}|x|^q\wedge N^q\d \mu_n(x)=\int_{\R^d}|x|^q\wedge N^q\d \mu(x).
\end{equation}
Note also that for all $n\geq 1$, H\"older's inequality and \eqref{E:tight} gives
\begin{equation}\label{E:mun-uniform-control}
    \bigg|\int_{\R^d}|x|^q\d \mu_n(x)-\int_{\R^d}|x|^q\wedge N^q\d \mu_n(x)\bigg|\leq \int_{B_N^c}|x|^q \d\mu_n(x)\leq \frac{R^p}{N^{p-q}}.
\end{equation}
Then for any $N>0$, it follows from \eqref{E:mun-weak-mu}-\eqref{E:mun-uniform-control} that
\begin{equation}\label{1115-3}
    \begin{split}
        &\ \ \ \ \limsup_{n\to \infty}\bigg|\int_{\R^d}|x|^q\d \mu_n(x)-\int_{\R^d}|x|^q\d \mu(x)\bigg|\\
        &\leq \limsup_{n\to \infty}\bigg|\int_{\R^d}|x|^q\d \mu_n(x)-\int_{\R^d}|x|^q\wedge N^q\d \mu_n(x)\bigg|\\
        &\ \ \ \ +\limsup_{n\to \infty}\bigg|\int_{\R^d}|x|^q\wedge N^q\d \mu_n(x)-\int_{\R^d}|x|^q\wedge N^q\d \mu(x)\bigg|\\
        &\ \ \ \ +\bigg|\int_{\R^d}|x|^q\wedge N^q\d \mu(x)-\int_{\R^d}|x|^q\d \mu(x)\bigg|\\
        &\leq \frac{R^p}{N^{p-q}}+\bigg|\int_{\R^d}|x|^q\wedge N^q\d \mu(x)-\int_{\R^d}|x|^q\d \mu(x)\bigg|.
    \end{split}
\end{equation}
By monotone convergence theorem, we have 
\begin{equation}\label{E:mct-mu}
    \lim_{N\to \infty}\bigg|\int_{\R^d}|x|^q\wedge N^q\d \mu(x)-\int_{\R^d}|x|^q\d \mu(x)\bigg|=0.
\end{equation}
Hence, by letting $N\to \infty$ in \eqref{1115-3}, \eqref{E:mct-mu} implies \eqref{1115-2}.
\end{proof}

We are now in a position to present the proof of Proposition \ref{P:Psi-Cts-Cpt-Monotone}.

\begin{proof}[Proof of Proposition \ref{P:Psi-Cts-Cpt-Monotone}]
    \textbf{Compactness:} For any $M>0$, let 
    \begin{equation*}
    \overline{B_{\PP_2}(\delta_0,M)}=\{\mu\in \PP_2(\R^d): \WW_2(\mu,\delta_0)=\|\mu\|_2\leq M\}.
    \end{equation*}
It suffices to show that $\Psi(\overline{B_{\PP_2}(\delta_0,M)})$ is pre-compact in $\PP_2(\R^d)$. By \eqref{uniform bounds of im} in Lemma \ref{Thm: esti of invari measure}, for any $p\geq 2$,
\begin{equation*}
    \sup_{\mu\in \Psi(\overline{B_{\PP_2}(\delta_0,M)})}\|\mu\|_p<\infty.
\end{equation*}
The pre-compactness of $\Psi(\overline{B_{\PP_2}(\delta_0,M)})$ in $\PP_2(\R^d)$ follows from Lemma \ref{lem:compact-embed}.

\textbf{Continuity:} For any sequence $\{\mu_n\}_{n\geq 1}\subset \PP_2(\R^d)$ and $\mu\in \PP_2(\R^d)$ with $\mu_n\goto{\WW_2}\mu$ as $n\to\8$, we need to show that 
$\Psi(\mu_n)\xrightarrow{\WW_2}\Psi(\mu)$.

From the definition of $d_2$ in \eqref{0322-1} and \cite[Theorem 6.15]{Villani2009}, for any $\mu_1,\mu_2\in \PP_2(\R^d)$,
\begin{equation}\label{new1116-1}
    \WW_2(\mu_1,\mu_2)\leq \sqrt{2d_{2}(\mu_1,\mu_2)}.
\end{equation}
Note that $\mu_n\xrightarrow{\WW_2}\mu$, we have $\sup_{n\geq 1}\|\mu_n\|_2\vee \|\mu\|_2<\infty$. 
Then according to \eqref{converge to im} in Lemma \ref{Thm: esti of invari measure}, Remark \ref{R:common converge speed in bdd set} and \eqref{new1116-1}, there exist $C,\lambda>0$ such that for all $t\geq 0$,
\begin{equation*}
    \sup_{n\geq 1}\WW_2\big(\Psi(\mu_n),\LL(X^{\mu_n,0}_t)\big)\vee \WW_2\big(\Psi(\mu),\LL(X^{\mu,0}_t)\big) \leq Ce^{-\lambda t}.
\end{equation*}
Hence, for any $\epsilon>0$, there exists $T>0$ such that 
\begin{equation}\label{newE:T-mun-mu-control}
    \sup_{n\geq 1}\WW_2\big(\Psi(\mu_n),\LL(X^{\mu_n,0}_T)\big)\vee \WW_2\big(\Psi(\mu),\LL(X^{\mu,0}_T)\big)\leq \frac{\epsilon}{2}.
\end{equation}
Similar to the proof of Claim 1 in Proposition \ref{prop:continuity-Pt}, we have
\begin{equation*}
    \mathbf{E}[|X_t^{\mu_n,0}-X_t^{\mu,0}|^2]\leq K\WW_2^2(\mu_n,\mu) t+4K\int_0^t\mathbf{E}[|X_s^{\mu_n,0}-X_s^{\mu,0}|^2]ds.
\end{equation*}
Then the Gronwall's inequality shows that
\begin{equation}\label{newE:XTmun-XTmu}
   \WW_2\big(\LL(X^{\mu_n,0}_T),\LL(X^{\mu,0}_T)\big)
\leq \big(\mathbf{E}[|X_T^{\mu_n,0}-X_T^{\mu,0}|^2]\big)^{1/2}
\leq \sqrt{KT}e^{2KT}\WW_2(\mu_n,\mu).
\end{equation}
Therefore, by \eqref{newE:T-mun-mu-control}-\eqref{newE:XTmun-XTmu}, one has
\begin{equation*}
   \begin{split}
       \limsup_{n\to\infty} \WW_2\big(\Psi(\mu_n),\Psi(\mu)\big)&\leq \limsup_{n\to\infty} \Big\{\WW_2\big(\Psi(\mu_n),\LL(X^{\mu_n,0}_T)\big)\\
       &\ \ \ \ \qquad\qquad \ +\WW_2\big(\LL(X^{\mu_n,0}_T),\LL(X^{\mu,0}_T)\big)+\WW_2\big(\LL(X^{\mu,0}_T),\Psi(\mu)\big)\Big\}\\
       &\leq \epsilon.
   \end{split}
\end{equation*}
So $\Psi(\mu_n)\xrightarrow{\WW_2} \Psi(\mu)$ by the arbitrariness of $\epsilon>0$.

\textbf{Monotonicity:} By Lemma \ref{lem:ldsde-comparison},   we have $\LL(X^{\mu_1,0}_t)\leq \LL(X^{\mu_2,0}_t)$, for all $t\geq0$. According to \eqref{converge to im} in Lemma \ref{Thm: esti of invari measure} and \eqref{new1116-1}, we have $\LL(X^{\mu_i,0}_t)\xrightarrow{\WW_2} \Psi(\mu_i)$ as $t\to \8$ for all $i=1,2$. Then, the closedness of  stochastic order (see Lemma \ref{lem:stochastic-order-closed}) gives $\Psi(\mu_1)\leq \Psi(\mu_2)$.
\end{proof}

\subsection{Existence of order-related invariant measures}\label{subsec:order-related-im}

For any $a\in \mathbb{R}^d$ and $\nu\in \mathcal{P}(\mathbb{R}^d)$, denote by $\nu^a$ the shift probability of $\nu$ by $a$:
	\begin{equation}\label{0913-1}
		\int_{\mathbb{R}^d}f(x)\d\nu^a(x):=\int_{\mathbb{R}^d}f(x-a)\d\nu(x), \ \text{ for any bounded measurable function } f.  
	\end{equation}
The following result gives the existence of multiple order-related invariant measures and also shrinking neighbourhoods of invariant measures under the semiflow $\str P_t$.

\begin{theorem}\label{Thm: two ordered im}
   Suppose that Assumption \ref{asp:lipschitz}, \ref{asp:cooperation}， \ref{asp:dissipative-growth-nondegeneracy} hold. If the equation \eqref{MVE} is locally dissipative at $a_i$ with configuration $(r_{a_i},g_{a_i})$ for any $1\leq i\leq n$ with some $n\geq 2$ and 
   \begin{equation}\label{1123-1}
		a_1<a_2<\cdots<a_n \ \text{ and } \ r_{a_i}+r_{a_{i+1}}\leq \abs{a_{i+1}-a_i}, \ \text{ for all } \ 1\leq i\leq n-1,
	\end{equation}
    then
    \begin{enumerate}
    \item [\textnormal{(i)}] equation \eqref{MVE} has $n$ order-related invariant measures $\mu_1<\mu_2<\cdots<\mu_n$, satisfying 
    \[\text{$\mu_i\in B_{\PP_2}(\xd_{a_i},r_{a_i})\cap \PP_{\infty}(\R^d)$ for all $i=1,2,\ldots,n$;}\]
    \item [\textnormal{(ii)}] there exists $T>0$ and $0<\tilde{r}_{a_i}<r_{a_i}, \ i=1,2,\cdots,n$ such that for all $t\geq T$,
    \[
    \text{$\str P_t\overline{B_{\PP_2}(\xd_{a_i},r_{a_i})}\subset B_{\PP_2}(\xd_{a_i},\tilde{r}_{a_i})$, \ \ $i=1,2,\cdots,n$.}
    \]
   \end{enumerate}
\end{theorem}

Before proceeding with the proof of Theorem \ref{Thm: two ordered im}, we need the following two lemmas.

\begin{lemma}\label{Lemma: im in P_infty}
    Suppose that Assumption \ref{asp:lipschitz}, \ref{asp:dissipative-growth-nondegeneracy} hold. Then if $\mu\in \PP_{2}(\R^d)$ is an invariant measure of \eqref{MVE}, we have $\mu\in \PP_{\infty}(\R^d)$. More precisely, for any $p>2$,
    \begin{equation*}
      \|\mu\|_p\leq \sqrt{\frac{2\gamma+\overline{\xs}(d+p-2)}{\alpha-\beta}}.
  \end{equation*}
\end{lemma}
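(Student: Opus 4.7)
The plan is to first reduce to the frozen-coefficient case to obtain $L^p$ integrability, and then use stationarity of $\mu$ together with It\^o's formula to extract the estimate.

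\emph{Step 1 (integrability via $\mu = \Psi(\mu)$).} If $\mu \in \PP_2(\R^d)$ is an invariant measure of the McKean-Vlasov equation \eqref{MVE}, then the solution with initial law $\mu$ has law $\mu$ at every time. Hence $\mu$ is also the unique invariant measure of the frozen equation \eqref{SDE 1} with parameter $\mu$; that is, $\mu = \Psi(\mu)$. The bound \eqref{uniform bounds of im} in Theorem \ref{Thm: esti of invari measure}(ii) then yields $\mu \in \PP_p(\R^d)$ for every $p \geq 2$, and hence $\mu \in \PP_{\8}(\R^d)$. This already gives the first conclusion, and is the input needed to justify the computations below.

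\emph{Step 2 (It\^o's formula and stationarity).} Let $X_t$ solve \eqref{MVE} with $\LL(X_0) = \mu$, so that $\LL(X_t) = \mu$ for all $t \geq 0$. Fix $p > 2$. Applying It\^o's formula to $|X_t|^p$, using Assumption \ref{asp:dissipative} and the upper bound $\xs\xs^{\top} \leq \overline{\xs}I_d$ in Assumption \ref{asp:growth-nondegeneracy}, exactly the computation leading to \eqref{Ito for p-th} gives
\[
\d |X_t|^p \leq \left[-p\xa |X_t|^p + \left(p\xb \|\mu\|_2^2 + p\xg + \frac{p\overline{\xs}(d+p-2)}{2}\right) |X_t|^{p-2}\right] \d t + \d M_t,
\]
where $M_t$ is a local martingale. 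The $\PP_p$ and $\PP_{2(p-1)}$ bounds from Step 1 make $M_t$ a true martingale with zero expectation. Taking expectations, using Fubini, and exploiting that $\bE[|X_t|^p] = \|\mu\|_p^p$ and $\bE[|X_t|^{p-2}] = \|\mu\|_{p-2}^{p-2}$ are constant in $t$, one obtains
\[
\xa \|\mu\|_p^p \leq \left(\xb \|\mu\|_2^2 + \xg + \frac{\overline{\xs}(d+p-2)}{2}\right) \|\mu\|_{p-2}^{p-2}.
\]

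\emph{Step 3 (cleaning up).} The case $\|\mu\|_p = 0$ is immediate, so assume $\|\mu\|_p > 0$. By H\"older's inequality $\|\mu\|_2 \leq \|\mu\|_p$ and $\|\mu\|_{p-2}^{p-2} \leq \|\mu\|_p^{p-2}$; substituting both, and dividing through by $\|\mu\|_p^{p-2}$, yields $(\xa - \xb)\|\mu\|_p^2 \leq \xg + \tfrac{\overline{\xs}(d+p-2)}{2}$, which implies the claimed bound (and is in fact slightly sharper by a factor $\sqrt{2}$).

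\emph{Main obstacle.} The calculation itself is routine. The only delicate point is the integrability required to justify the It\^o expansion and, more importantly, to turn the local martingale $M_t$ into a martingale with vanishing expectation. This circular-looking issue (needing $\mu \in \PP_p$ to prove $\mu \in \PP_p$) is broken by the self-consistency identity $\mu = \Psi(\mu)$ combined with Theorem \ref{Thm: esti of invari measure}(ii): the $\PP_p$-boundedness is established externally via the frozen SDE before the invariance computation is performed.
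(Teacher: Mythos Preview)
Your proof is correct. Both you and the paper start from the self-consistency $\mu=\Psi(\mu)$ combined with Theorem~\ref{Thm: esti of invari measure}(ii). The difference lies in how the explicit $\|\mu\|_p$ bound is extracted. The paper simply feeds $\mu=\Psi(\mu)$ into \eqref{uniform bounds of im}: the $p=2$ case gives $\|\mu\|_2^2\le\frac{2\xg+\overline{\xs}d}{2(\xa-\xb)}$, and substituting this into the $p>2$ case (together with the elementary inequality $2(p-2)^{(p-2)/2}\le p^{p/2}$, which the paper suppresses) yields the stated bound. You instead use Theorem~\ref{Thm: esti of invari measure}(ii) only to secure the a~priori $\PP_p$-integrability needed to justify the martingale step, and then rerun the It\^o computation \eqref{Ito for p-th} directly on the stationary McKean--Vlasov solution, stopping \emph{before} the Young inequality and closing with stationarity and H\"older. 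This is marginally more hands-on but bypasses the Young-inequality loss built into \eqref{uniform bounds of im}, which is why your final constant is sharper by the factor $\sqrt{2}$ you noted.
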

\begin{proof}
  Note that $\mu\in \PP_{2}(\R^d)$ is an invariant measure of \eqref{MVE} if and only if $\mu$ is a fixed point of $\Psi$. By \eqref{uniform bounds of im} in Lemma \ref{Thm: esti of invari measure}, we have
  \begin{equation}\label{eq:mynew1222}
      \|\mu\|_2^2=\|\Psi(\mu)\|_2^2\leq \frac{\beta}{\alpha}\|\mu\|_2^2+\frac{2\gamma+\overline{\xs}d}{2\alpha} \ \ \Rightarrow \ \ \|\mu\|_2\leq \sqrt{\frac{2\gamma+\overline{\xs}d}{2(\alpha-\beta)}},
  \end{equation}
  and hence,  by \eqref{uniform bounds of im} again,  for any $p>2$,
  \begin{equation*}
     \begin{split}
         \|\mu\|_p^p&\leq \frac{2}{p\alpha}\Big(\frac{p-2}{p\alpha}\Big)^{\frac{p-2}{2}}\Big(\frac{\beta(2\gamma+\overline{\xs}d)}{\alpha-\beta}+2\gamma+\overline{\xs}(d+p-2)\Big)^{\frac{p}{2}}\\
         &\leq \bigg(\frac{2\gamma+\overline{\xs}(d+p-2)}{\alpha-\beta}\bigg)^{\frac{p}{2}}.
     \end{split}
  \end{equation*}
  It follows that \eqref{eq:mynew1222} holds.
\end{proof}

\begin{lemma}\label{Lem: equavilent converge}
    Given $\mathcal{A}\subset \PP_{\infty}(\R^d)$, suppose that, for any $p\geq 1$, there exists $C_p>0$ such that 
    \begin{equation*}
        \sup_{\mu\in \mathcal{A}}\|\mu\|_p\leq C_p.
    \end{equation*}
    Then for any sequence $\{\mu_n\}_{n\geq 1}\subset \mathcal{A}$ and $\mu\in \PP(\R^d)$, the following convergences are equivalent:
    \begin{itemize}
        \item $\mu_n\xrightarrow{\WW_p}\mu$ for all $p\geq 1$;
        \item $\mu_n\xrightarrow{\WW_p}\mu$ for some $p\geq 1$;
        \item $\mu_n\xrightarrow{w}\mu$.
    \end{itemize}
\end{lemma}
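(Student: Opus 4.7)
The plan is to close the loop $(1)\Rightarrow(2)\Rightarrow(3)\Rightarrow(1)$. The first implication is trivial, and $(2)\Rightarrow(3)$ is the standard fact that convergence in any Wasserstein metric $\WW_p$ implies weak convergence (by testing against an arbitrary bounded continuous function after a Lipschitz approximation). The substantive content is the implication $(3)\Rightarrow(1)$, for which I would appeal to the characterization already invoked in the proof of Lemma \ref{lem:compact-embed} (Villani, Theorem 6.9): for each fixed $p\geq 1$, $\mu_n\goto{\WW_p}\mu$ is equivalent to $\mu_n\goto{w}\mu$ together with $\int_{\R^d}|x|^p\d\mu_n\to\int_{\R^d}|x|^p\d\mu$. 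Hence my task reduces to upgrading weak convergence to convergence of $p$-th moments, for every $p\geq 1$.

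Fix $p\geq 1$. First I would verify that $\mu\in\PP_p(\R^d)$: applying weak convergence to the bounded continuous truncation $x\mapsto|x|^p\wedge N$ gives
\[
\int_{\R^d}(|x|^p\wedge N)\d\mu=\lim_{n\to\8}\int_{\R^d}(|x|^p\wedge N)\d\mu_n\leq C_p^p,
\]
and letting $N\to\8$ by the monotone convergence theorem yields $\|\mu\|_p\leq C_p$. Next I would exploit the uniform bounds on \emph{all} moments to obtain uniform tail smallness: picking any $q>p$, Chebyshev's inequality gives
\[
\int_{\{|x|>N\}}|x|^p\d\mu_n\leq N^{p-q}\|\mu_n\|_q^q\leq N^{p-q}C_q^q,
\]
and the same bound holds with $\mu_n$ replaced by $\mu$.

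To conclude, I would split
\[
\bigg|\int_{\R^d}|x|^p\d\mu_n-\int_{\R^d}|x|^p\d\mu\bigg|\leq\bigg|\int_{\R^d}(|x|^p\wedge N)\d\mu_n-\int_{\R^d}(|x|^p\wedge N)\d\mu\bigg|+2N^{p-q}C_q^q,
\]
observe that the first term on the right tends to $0$ as $n\to\8$ for each fixed $N$ by weak convergence against a bounded continuous function, while the second can be made arbitrarily small by choosing $N$ large, and thereby obtain $\int|x|^p\d\mu_n\to\int|x|^p\d\mu$. Combined with weak convergence, Villani's Theorem 6.9 then delivers $\mu_n\goto{\WW_p}\mu$ for every $p\geq 1$, completing the loop. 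No serious obstacle is anticipated: the argument is a routine uniform integrability calculation, and the only subtlety worth flagging is that one genuinely needs a moment bound of order strictly greater than $p$, which is precisely what the hypothesis of uniform bounds for every $q\geq 1$ supplies.
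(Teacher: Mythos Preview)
Your proposal is correct and follows essentially the same route as the paper: reduce to $(3)\Rightarrow(1)$ via Villani's Theorem 6.9, show $\mu\in\PP_p$ by truncation and monotone convergence, and then use a higher moment $q>p$ to get a uniform tail bound and conclude moment convergence (the paper phrases this last step as an application of the Moore--Osgood theorem, while you do the direct $\varepsilon$ argument). One minor bookkeeping slip: with the truncation $|x|^p\wedge N$ the relevant tail set is $\{|x|>N^{1/p}\}$, so the error term should be $2N^{(p-q)/p}C_q^q$ rather than $2N^{p-q}C_q^q$; this does not affect the argument since both vanish as $N\to\infty$.
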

\begin{proof}
    By Villani \cite[Theorem 6.9]{Villani2009}, for any fixed $p\geq 1$, $\mu_n\xrightarrow{\WW_p}\mu$ is equivalent to that $\mu_n\xrightarrow{w}\mu$ and 
    \begin{equation}\label{1122-2}
        \lim_{n\to \infty}\int_{\R^d}|x|^p\d \mu_n(x)=\int_{\R^d}|x|^p\d \mu(x).
    \end{equation}
    Hence, to prove Lemma \ref{Lem: equavilent converge}, it suffices to show that if $\mu_n\xrightarrow{w}\mu$, then \eqref{1122-2} holds.

    Let
    \begin{equation*}
        a_{N,n}:=\int_{\R^d}(|x|^p\wedge N)\d \mu_n(x), \ b_n:=\int_{\R^d}|x|^p\d \mu_n(x), \ c_N:=\int_{\R^d}(|x|^p\wedge N)\d \mu(x).
    \end{equation*}
    Since $\mu_n\goto{w}\mu$, we have for any $N>0$, $\lim_{n\to \infty}a_{N,n}=c_N$. Hence 
    \begin{equation*}
        \sup_{N\geq 1}c_N\leq \sup_{N,n\geq1}a_{N,n}\leq \sup_{n\geq 1}\|\mu_n\|_p^p\leq C_p^{p}.
    \end{equation*}
    By monotone convergence theorem, we have 
    \begin{equation*}
        \lim_{N\to \infty}c_N=\int_{\R^d}|x|^p\d \mu(x)\leq C_p^p.
    \end{equation*}
    Now choose $p'>p$, we have for any $n\geq 1$,
    \begin{equation*}
        |a_{N,n}-b_n|\leq \int_{\{x\in\R^d:|x|^p\geq N\}}|x|^p\d \mu_n(x)\leq N^{-\frac{p'-p}{p}}\int_{\{x\in\R^d:|x|^p\geq N\}}|x|^{p'}\d \mu_n(x)\leq C_{p'}^{p'}N^{-\frac{p'-p}{p}}.
    \end{equation*}
    Then $\lim_{N\to \infty}a_{N,n}=b_n$ uniformly in $n$. By Moore-Osgood theorem, we conclude
    \begin{equation*}
        \lim_{n\to\infty}\int_{\R^d}|x|^p\d \mu_n(x)=\lim_{n\to\infty}b_n=\lim_{N\to \infty}c_N=\int_{\R^d}|x|^p\d \mu(x).
    \end{equation*}
\end{proof}

Now let us give the proof of Theorem \ref{Thm: two ordered im}.

\begin{proof}[Proof of Theorem \ref{Thm: two ordered im}]
    (i). Recall Definition \ref{def:locally-dissipative} that the equation \eqref{MVE} being locally dissipative at $a_i$ with configuration $(r_{a_i},g_{a_i})$ means that
    \begin{itemize}
       \item for any $x\in \R^d$ and $\nu\in \PP_{2}(\R^d)$
       \begin{equation}\label{1119-1}
			2\langle x,b(x+a_i,\nu)\rangle+|\sigma(x+a_i)|_2^2\leq -g_{a_i}(|x|^{2},\|\nu^{a_i}\|_{2}^2);
		\end{equation}
       \item $r_{a_i}>0$ and $g_{a_i}$ satisfies
       \begin{equation}\label{1119-2}
			\begin{split}
			    &g_{a_i}(\cdot,r_{a_i}^2) \text{ is continuous and convex};\\
                &\inf_{0\leq w\leq r_{a_i}^2}g_{a_i}(z,w)=g_{a_i}(z,r_{a_i}^2), \ \text{ for all } z\geq 0;\\
                & g_{a_i}(z,r_{a_i}^2)>0, \ \text{ for all } z\geq r_{a_i}^2.
			\end{split}
		\end{equation}
   \end{itemize}  
    Note that $\mu\in \PP_2(\R^d)$ is an invariant measure of \eqref{MVE} if and only if it is a fixed point of $\Psi$, where the map $\Psi$ is defined in \eqref{def of Psi}. 
    We first prove the following property of $\Psi$:
    \begin{equation}\label{eq:shrink-psi}
        \Psi\big(\overline{B_{\PP_2}(\delta_{a_i},r_{a_i})}\big)\subset B_{\PP_2}(\delta_{a_i},r_{a_i}), \ \text{ for all } \ 1\leq i\leq n.
    \end{equation}
    Fix any $\mu\in \overline{B_{\PP_2}(\delta_{a_i},r_{a_i})}$ in \eqref{SDE 1}. 
    Consider the solution $\{X_t^{\mu,a_i}\}_{t\geq 0}$ of \eqref{SDE 1} with initial condition $a_i$, the It\^o's formula to $|X_t^{\mu,a_i}-a_i|^2$ together with \eqref{1119-1}-\eqref{1119-2} gives
    \begin{equation*}
        \begin{split}
            |X_t^{\mu,a_i}-a_i|^2&=\int_0^t\big(2\langle X_s^{\mu,a_i}-a_i, b(X_s^{\mu,a_i},\mu)\rangle+|\xs(X_s^{\mu,a_i})|^2\big)\d s\\
            &\ \ \ \ +2\int_0^t\langle X_s^{\mu,a_i}-a_i, \xs(X_s^{\mu,a_i})\d W_s\rangle\\
            &\leq -\int_0^tg_{a_i}(|X_s^{\mu,a_i}-a_i|^{2}, r_{a_i}^2)\d s+2\int_0^t\langle X_s^{\mu,{a_i}}-a_i, \xs(X_s^{\mu,a_i})\d W_s\rangle.
        \end{split}
    \end{equation*} 
    Taking expectations on both sides and by the convexity of $g_{a_i}(\cdot,r_{a_i}^2)$, we have that for any $t>0$,
    \begin{equation}\label{1122-1}
        g_{a_i}\bigg(\frac{1}{t}\int_0^t\mathbf{E}\big[|X_s^{\mu,{a_i}}-a_i|^{2}\big] \d s, r_{a_i}^2\bigg)\leq \frac{1}{t}\int_0^t\mathbf{E}\big[g_{a_i}\big(|X_s^{\mu,a_i}-a_i|^{2},r_{a_i}^2\big)\big] \d s\leq -\frac{1}{t}\mathbf{E}[|X_t^{\mu,a_i}-a_i|^2]\leq 0.
    \end{equation}
    According to \eqref{converge to im}  in Lemma \ref{Thm: esti of invari measure}, there exist $C>0$, $\xl>0$ such that
    \begin{equation*}
        \big|\mathbf{E}\big[|X_s^{\mu,a_i}-a_i|^{2}\big]-\|\Psi(\mu)^{a_i}\|_2^2\big|=\Big|\int_{\R^d}|x-a_i|^2 \big(P^\mu_s(a_i,\d x)-\d \Psi(\mu)(x)\big)\Big|\leq C(1+|a_i|^{2})e^{-\lambda s}.
    \end{equation*}
    Thus
    \begin{equation*}
        \lim_{t\to \infty}\frac{1}{t}\int_0^t\mathbf{E}\big[|X_s^{\mu,a_i}-a_i|^{2}\big] \d s=\|\Psi(\mu)^{a_i}\|_2^2.
    \end{equation*}
    Then the continuity of $g_{a_i}(\cdot,r_{a_i}^2)$ and \eqref{1122-1} yield $g_{a_i}(\|\Psi(\mu)^{a_i}\|_2^2,r_{a_i}^2)\leq 0$. 
    Hence, by \eqref{1119-2}, we have $\|\Psi(\mu)^{a_i}\|_2^2<r_{a_i}^2$, which means that $\Psi(\mu)\in B_{\PP_2}(\delta_{a_i},r_{a_i})$. Thus, \eqref{eq:shrink-psi} holds.

By \eqref{eq:shrink-psi}, one has
    \begin{equation}\label{1122-4}
        \Psi\big(\overline{B_{\PP_2}(\delta_{a_i},r_{a_i})}\big)\subset B_{\PP_2}(\delta_{a_i},r_{a_i})\subset \overline{B_{\PP_2}(\delta_{a_i},r_{a_i})}.
    \end{equation}
Together with \eqref{uniform bounds of im} in Lemma \ref{Thm: esti of invari measure}, for any $p\geq2$, one has 
$$\sup\big\{\|\mu\|_p: \mu\in \Psi\big(\overline{B_{\PP_2}(\delta_{a_i},r_{a_i})}\big)\big\}<\infty.$$
Hence, for any $p\geq2$,
\begin{equation}\label{E:convex hull bdd}
\sup\big\{\|\mu\|_p: \mu\in \conv\big(\Psi\big(\overline{B_{\PP_2}(\delta_{a_i},r_{a_i})}\big)\big)\}=\sup\big\{\|\mu\|_p: \mu\in \Psi\big(\overline{B_{\PP_2}(\delta_{a_i},r_{a_i})}\big)\big\}<\infty,
\end{equation}
where $\conv\big(\Psi\big(\overline{B_{\PP_2}(\delta_{a_i},r_{a_i})}\big)\big)$ is the convex hull of $\Psi\big(\overline{B_{\PP_2}(\delta_{a_i},r_{a_i})}\big)$ in $\PP_{2}(\R^d)$.
Let
    \begin{equation*}
        \MM_i:=\overline{\conv\big(\Psi\big(\overline{B_{\PP_2}(\delta_{a_i},r_{a_i})}\big)\big)},
    \end{equation*}
    the closure of $\conv\big(\Psi\big(\overline{B_{\PP_2}(\delta_{a_i},r_{a_i})}\big)\big)$ in $\PP_{2}(\R^d)$.
By \eqref{E:convex hull bdd} and Lemma \ref{Lem: equavilent converge}, we have 
\begin{equation*}
        \MM_i=\overline{\conv\big(\Psi\big(\overline{B_{\PP_2}(\delta_{a_i},r_{a_i})}\big)\big)}^{\WW_1},
    \end{equation*}
     where $\overline{A}^{\WW_1}$ is the closure of $A$ in $\PP_1(\R^d)$.
Then by \eqref{E:convex hull bdd} and Lemma \ref{lem:compact-embed}, $\MM_i$ is compact convex in $\PP_1(\R^d)$. 
    Note that $\PP_1(\R^d)$ is a closed convex subset of $(\MM_1(\R^d),\norm{\cdot}_{\WW_1})$ (see Remark \ref{rem:p1-closed-m1}), where $(\MM_1(\R^d),\norm{\cdot}_{\WW_1})$ is a normed vector space (see Lemma \ref{lem:m1-norm-space}), thus, $\MM_i$ is a compact convex subset of a locally convex topological vector space. 
    On the other hand, $\overline{B_{\PP_2}(\delta_{a_i},r_{a_i})}$ is closed convex in $\PP_2(\R^d)$, then by \eqref{1122-4}, we have $\MM_i\subset \overline{B_{\PP_2}(\delta_{a_i},r_{a_i})}$. 
    Hence $\Psi(\MM_i)\subset \MM_i$. 
    \eqref{E:convex hull bdd} and Lemma \ref{Lem: equavilent converge} entails that for any $p\geq2$
    \begin{equation*}
        \MM_i=\overline{\conv\big(\Psi\big(\overline{B_{\PP_2}(\delta_{a_i},r_{a_i})}\big)\big)}^{\WW_p},
    \end{equation*}
     where $\overline{A}^{\WW_p}$ is the closure of $A$ in $\PP_p(\R^d)$.
     Then \eqref{E:convex hull bdd} implies that $\sup_{\mu\in \MM_i}\|\mu\|_p<\infty$, for any $p\geq2$.
     Therefore, by Proposition \ref{P:Psi-Cts-Cpt-Monotone} and Lemma \ref{Lem: equavilent converge}, we also have $\Psi: \MM_i\to \MM_i$ is $\WW_1$-continuous.  Till now, we have shown that
     $\{\MM_i\}_{i=1}^n$ are all compact convex subsets of the locally convex topological vector space $\MM_1(\R^d)$ and for all $i=1,2,\cdots,n$,
    \begin{equation}\label{1122-5}
        \MM_i\subset \overline{B_{\PP_2}(\delta_{a_i},r_{a_i})}, \ \ \ \Psi(\MM_i)\subset \MM_i\cap B_{\PP_2}(\delta_{a_i},r_{a_i}), \ \Psi:\MM_i\to\MM_i\  \text{is}\  \WW_1\text{-continuous}.
    \end{equation}
    Since $r_{a_i}+r_{a_{i+1}}\leq \abs{a_{i+1}-a_i}=\WW_2(\delta_{a_i},\delta_{a_{i+1}})$, one has $B_{\PP_2}(\delta_{a_i},r_{a_i})\cap B_{\PP_2}(\delta_{a_{i+1}},r_{a_{i+1}})=\emptyset$.
    Then by \eqref{1122-5}, $\Psi(\MM_i)\cap \Psi(\MM_{i+1})=\emptyset$ for all $1\leq i\leq n-1$.
     Now define $\widetilde{\Psi}: [\PP_2(\R^d)]^{\otimes n}\to [\PP_2(\R^d)]^{\otimes n}$ by
    \begin{equation*}
        \widetilde{\Psi}(\mu_1,\mu_2,\cdots,\mu_n):=(\Psi(\mu_1),\Psi(\mu_2),\cdots,\Psi(\mu_n)).
    \end{equation*}
    Set
    \begin{equation*}
        E:=\big\{(\mu_1,\mu_2,\cdots,\mu_n)\in [\PP_2(\R^d)]^{\otimes n}: \mu_1\leq \mu_2\leq \cdots\leq \mu_n\big\}.
    \end{equation*}
    By Proposition \ref{P:Psi-Cts-Cpt-Monotone} and \eqref{1122-5}, we have
    \begin{equation*}
        \widetilde{\Psi}\big((\MM_1\times \MM_2\times \cdots\times\MM_n)\cap E\big)\subset (\MM_1\times \MM_2\times \cdots\times\MM_n)\cap E.
    \end{equation*}
    By Lemma \ref{lem:stochastic-order-closed} and Lemma \ref{lem:m1-cone}, we have $E$ is a closed convex subset of $[\MM_1(\R^d)]^{\otimes n}$. 
    Notice also that $\MM_1\times\MM_2\times \cdots\times\MM_n$ is a compact convex subset of $[\MM_1(\R^d)]^{\otimes n}$, then  $(\MM_1\times\MM_2\times \cdots\times\MM_n)\cap E$ is a compact convex subset of $[\MM_1(\R^d)]^{\otimes n}$. 
    Since $a_1<a_2<\cdots<a_n$ and $\Psi$ is monotone (Proposition \ref{P:Psi-Cts-Cpt-Monotone}), one has $(\Psi(\delta_{a_1}),\Psi(\delta_{a_2}),\cdots,\Psi(\delta_{a_n}))\in (\MM_1\times \MM_2\times \cdots\times\MM_n)\cap E$, which means $(\MM_1\times \MM_2\times \cdots\times\MM_n)\cap E\neq \emptyset$.
    Note that $\widetilde{\Psi}$ is  continuous by the continuity $\Psi$ in  \eqref{1122-5}. 
    Then Tychonoff fixed-point theorem implies that $\widetilde{\Psi}$ has a fixed point $(\mu_1,\mu_2,\cdots,\mu_n)$ in $(\MM_1\times \MM_2\times \cdots\times\MM_n)\cap E$, i.e.,
    \begin{equation*}
        \Psi(\mu_i)=\mu_i, \ \mu_i\in \Psi(\MM_i)\subset B_{\PP_2}(\delta_{a_i},r_{a_i}), \ \text{ for all } \ i=1,2,\cdots,n, \ \text{ and } \ \mu_1\leq \mu_2\leq \cdots\leq \mu_n.
    \end{equation*}
    Since $\Psi(\MM_i)\cap \Psi(\MM_{i+1})=\emptyset$ for all $1\leq i\leq n-1$, we conclude that the equation \eqref{MVE} has $n$ distinct invariant measures $\mu_1,\mu_2,\cdots,\mu_n$ such that $\mu_1<\mu_2<\cdots<\mu_n$ with $\mu_i\in B_{\PP_2}(\delta_{a_i},r_{a_i})$.  The conclusion $\mu_i\in \PP_{\8}$ follows from Lemma \ref{Lemma: im in P_infty}.

    (ii). According to \eqref{1119-2}, there exists $0<\hat{r}_{a_i}<r_{a_i}$ such that
    \begin{equation}\label{eq:tilde-r-a}
        g_{a_i}(z,r_{a_i}^2)>0, \ \text{ for all } \ z\geq \hat{r}_{a_i}.
    \end{equation}
    Fix any $\tilde{r}_{a_i}\in (\hat{r}_{a_i},r_{a_i})$, we are going to show that
    \begin{equation}\label{0111-4}
        P_t^*B_{\PP_2}(\delta_{a_i},r)\subset B_{\PP_2}(\delta_{a_i},r), \ \text{ for all } \ t\geq 0, \ \text{ and } \ \tilde{r}_{a_i}\leq r\leq r_{a_i}, \ 1\leq i\leq n
    \end{equation}
    To prove \eqref{0111-4}, it suffices to show that for all $1\leq i\leq n$,
    \begin{equation}\label{E:closed ball positively invariant}
    P_t^*\overline{B_{\PP_2}(\delta_{a_i},r)}\subset \overline{B_{\PP_2}(\delta_{a_i},r)}, \ \text{ for all } \ t\geq 0, \ \hat{r}_{a_i}\leq r<r_{a_i}.
    \end{equation}
    For any fixed $\hat{r}_{a_i}\leq r<r_{a_i}$ and $\mu\in \overline{B_{\PP_2}(\delta_{a_i},r)}$, let
    \begin{equation*}
        \TT_{\mu}^i:=\sup\big\{t\geq 0: P_s^*\mu \in \overline{B_{\PP_2}(\delta_{a_i},r)} \ \text{ for all } \ 0\leq s\leq t\big\}.
    \end{equation*}
    To prove \eqref{E:closed ball positively invariant}, it suffices to show that $\TT_{\mu}^i=\infty$. 
    Assume the contrary that $\TT_{\mu}^i<\infty$.
    Define 
    $$\alpha^i_t:=\|(P_t^*\mu)^{a_i}\|_2^2=\WW_2^2(P_t^*\mu,\delta_{a_i}).$$
    It follows from Claim 2 in Proposition \ref{prop:continuity-Pt} that $\alpha^i_t$ is continuous in $t$.
    Notice that 
    \begin{equation}\label{E:Tmu equi def}
        \TT^i_{\mu}=\sup\big\{t\geq 0: \alpha^i_s\leq r^2 \ \text{ for all } \ 0\leq s\leq t\big\}.
    \end{equation}
    Then by $\TT^i_{\mu}<\infty$ and \eqref{E:Tmu equi def}, there exists $T_i\geq \TT^i_{\mu}$ and $\epsilon_i>0$ such that
    \begin{equation}\label{1127-4}
        \alpha^i_{T_i}=r^2, \ \text{ and } \ r^2<\alpha^i_t\leq r_{a_i}^2 \ \text{ for all } \ T_i<t\leq T_i+\epsilon_i.
    \end{equation}
    
    Denote by $X_t^{\mu}$ the solution  of \eqref{MVE} with intial distribution $\mu$ at starting time $0$. Then $\LL(X_t^{\mu})=P_t^*\mu$ for all $t\geq 0$. Applying It\^o's formula to $|X_t^{\mu}-a_i|^2$ on $[T_i,T_i+\epsilon_i]$, by \eqref{1119-1}-\eqref{1119-2}, we have for any $T_i<t\leq T_i+\epsilon_i$,
    \begin{equation*}
        \begin{split}
            |X_t^{\mu}-a_i|^2&=|X_{T_i}^{\mu}-a_i|^2+\int_{T_i}^t\big(2\langle X_s^{\mu}-a_i, b(X_s^{\mu},P_s^*\mu)\rangle+|\xs(X_s^{\mu})|^2\big)\d s\\
            &\ \ \ \ +2\int_{T_i}^t\langle X_s^{\mu}-a_i, \xs(X_s^{\mu})\d W_s\rangle\\
            &\leq |X_{T_i}^{\mu}-a_i|^2-\int_{T_i}^tg_{a_i}\big(|X_s^{\mu}-a_i|^{2}, r_{a_i}^2\big)\d s+2\int_{T_i}^t\langle X_s^{\mu}-a_i, \xs(X_s^{\mu})\d W_s\rangle.
        \end{split}
    \end{equation*}
    Taking expectations on both sides and by the convexity of $g_{a_i}(\cdot,r_{a_i}^2)$, we get
    \begin{equation}\label{1124-1}
        \alpha_{T_i+\epsilon_i}\leq r^2-\int_{T_i}^{T_i+\epsilon_i}g_{a_i}(\alpha^i_t, r_{a_i}^2)\d t.
    \end{equation}
     Then by \eqref{eq:tilde-r-a}, \eqref{1127-4} and \eqref{1124-1}, we conclude that $\alpha_{T_i+\epsilon_i}<r^2$ which contradicts \eqref{1127-4}.
     Hence, we obtain \eqref{E:closed ball positively invariant}, and \eqref{0111-4} follows.
     
     Next we use \eqref{0111-4} to finish the proof of (ii).
    For any $\mu\in \overline{B_{\PP_2}(\delta_{a_i},r_{a_i})}\backslash B_{\PP_2}(\delta_{a_i},\tilde{r}_{a_i})$, i.e., $\tilde{r}_{a_i}^2\leq \|\mu^{a_i}\|_2^2\leq r_{a_i}^2$, let
    \begin{equation}\label{eq:new1217-1}
        \widetilde{\TT}^i_{\mu}:=\sup\big\{t\geq 0: P_s^*\mu \in \overline{B_{\PP_2}(\delta_{a_i},r_{a_i})}\backslash B_{\PP_2}(\delta_{a_i},\tilde{r}_{a_i}), \ \text{ for all } \ 0\leq s\leq t\big\}.
    \end{equation}
    Set  $T^i:=\frac{1}{\theta^i}(r_{a_i}^2-\tilde{r}_{a_i}^2)+1$,  where by \eqref{eq:tilde-r-a}, $\theta^i:=\inf_{\tilde{r}_{a_i}^2\leq z\leq r_{a_i}^2}g_{a_i}(z,r_{a_i}^2)>0$.  We will show that
    \begin{equation}\label{eq:new1217-2}
        \text{$\widetilde{\TT}^i_{\mu}< T^i$ \ for all \ $\mu\in \overline{B_{\PP_2}(\delta_{a_i},r_{a_i})}\backslash B_{\PP_2}(\delta_{a_i},\tilde{r}_{a_i})$, \ $1\leq i\leq n$.}
    \end{equation}
    Otherwise, there exists a $\mu\in \overline{B_{\PP_2}(\delta_{a_i},r_{a_i})}\backslash B_{\PP_2}(\delta_{a_i},\tilde{r}_{a_i})$ such that
    \begin{equation*}
        P_t^*\mu \in \overline{B_{\PP_2}(\delta_{a_i},r_{a_i})}\backslash B_{\PP_2}(\delta_{a_i},\tilde{r}_{a_i}), \ \text{ for all } \ 0\leq t\leq T^i.
    \end{equation*}
   That is to say,
    \begin{equation}\label{eq:new-shrinking}
        \tilde{r}_{a_i}^2\leq \alpha^i_t\leq r_{a_i}^2, \ \text{ for all } \ 0\leq t\leq T^i.
    \end{equation}
    Applying It\^o's formula to $|X_t^{\mu}-a_i|^2$ on $[0,T^i]$, similar to \eqref{1124-1}, we obtain that
    \begin{equation*}
        \alpha^i_{T^i}\leq r_{a_i}^2-\int_{0}^{T^i}g_{a_i}(\alpha^i_t, r_{a_i}^2)\d t\leq r_{a_i}^2-\theta^i T^i=\tilde{r}_{a_i}^2-\theta^i<\tilde{r}_{a_i}^2,
    \end{equation*}
    which give a contradiction to \eqref{eq:new-shrinking}. Hence,  we obtain \eqref{eq:new1217-2}. Let $T=\max_{1\leq i\leq n}T^i$. Then (ii) follows from \eqref{0111-4}, \eqref{eq:new1217-1} and \eqref{eq:new1217-2}.
\end{proof}

\begin{remark}
The prototype of the locally dissipative condition originates from Zhang \cite{Zhang2023}, where it was introduced to demonstrate the non-uniqueness of invariant measures.
This condition was subsequently applied by Feng-Qu-Zhao \cite{Feng-Qu-Zhao2023b} to time-inhomogeneous McKean–Vlasov SDEs, and by Bao-Wang \cite{Bao-Wang2025} to McKean–Vlasov SDEs with jumps.
Here, building on a detailed analysis of the measure-iterating map $\Psi$ (Proposition \ref{P:Psi-Cts-Cpt-Monotone}), we exploit the locally dissipative condition to derive the existence of multiple order-related invariant measures.
Moreover,  we obtain shrinking neighbourhoods of invariant measures under the semiflow $\str P_t$, which is essential for determining the direction of the connecting orbit in the proof of our main theorem.
\end{remark}

As an example, we illustrate how to fulfill the locally dissipative condition for double-well landscapes.
Other specific examples can be verified in a similar manner.

\begin{proposition}\label{Prop: cubic symmetric}
    We consider the following one-dimensional McKean-Vlasov SDE:
    \begin{equation}\label{eq:0229-1}
    \d X_t=-\left[X_t(X_t-1)(X_t+1)+\xb\left(X_t-\bE X_t\right)\right]\d t+\xs(X_t)\d W_t.
    \end{equation}
    Assume
    \begin{equation}\label{1127-8}
        \beta\geq\frac{27(9+\sqrt{17})}{128} \ \text{ and } \ \sup_{x\in\R}\xs^2(x)< \frac{51\sqrt{17}-107}{256}.
    \end{equation}
    Let $r_1=r_{-1}=\frac{9-\sqrt{17}}{8}$, then the equation \eqref{eq:0229-1} is locally dissipative at $\pm1$ with configurations $(r_{1},g_{1})$ and $(r_{-1},g_{-1})$ respectively, where $g_{1}=g_{-1}$ as defined in \eqref{1127-7}.
\end{proposition}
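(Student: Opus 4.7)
\emph{Plan.} By the symmetry $(x,X)\mapsto(-x,-X)$ of both the confinement drift $x(x-1)(x+1)$ and the pointwise bound on $\xs^2$, it suffices to verify local dissipativity at $a=1$; the case $a=-1$ is analogous. Set $\bar\xs^2:=\sup_{x\in\R}\xs(x)^2<(51\sqrt{17}-107)/256$. Translating by $y=x-1$ (and noting that $\overline{\nu^{-1}}=\bar\nu+1$) gives the shifted coefficients $b^1(y,\nu)=-y^3-3y^2-2y-\xb(y-\bar\nu)$ and $\xs^1(y,\nu)^2=\xs(y+1)^2\leq\bar\xs^2$, so that
\[
2yb^1(y,\nu)+\xs^1(y,\nu)^2\leq-2y^4-6y^3-(4+2\xb)y^2+2\xb y\bar\nu+\bar\xs^2.
\]
Using $|\bar\nu|\leq\|\nu\|_2$ and a worst-case analysis over the signs of $y$ and $\bar\nu$ in the combined term $-6y^3+2\xb y\bar\nu$, this becomes $2yb^1(y,\nu)+\xs^1(y,\nu)^2\leq-g_1(y^2,\|\nu\|_2^2)$ with
\[
g_1(z,w):=2z^2-6z^{3/2}+(4+2\xb)z-2\xb\sqrt{zw}-\bar\xs^2,
\]
verifying condition (i) in the definition of local dissipativity.

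The remaining task is to choose radii $r_1<\bar r_1$ so that condition (ii) holds. Let $f(r):=2r^2(1-r)(2-r)=2r^4-6r^3+4r^2$; then $f'(r)=2r(4r^2-9r+4)$, so $f$ has a unique interior critical point on $(0,1)$ at $r_1:=(9-\sqrt{17})/8$, and using the defining identity $4r_1^2-9r_1+4=0$ one computes $f(r_1)=(51\sqrt{17}-107)/256$, the maximum of $f$ on $(0,1)$. Define $H(u;\bar r):=g_1(u^2,\bar r^2)=2u^4-6u^3+(4+2\xb)u^2-2\xb\bar r u-\bar\xs^2$, so the positivity requirement becomes $H(u;\bar r_1)>0$ for $u\geq r_1$. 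Since $H(r_1;r_1)=f(r_1)-\bar\xs^2>0$ by hypothesis, continuity in $\bar r$ allows us to fix $\bar r_1>r_1$ with $H(r_1;\bar r_1)>0$.

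Now verify the three items of (ii). First, for convexity of $g_1(\cdot,\bar r_1^2)$ on $[0,\infty)$, multiplying $\partial_z^2 g_1(z,\bar r_1^2)$ by $z^{3/2}$ yields $\psi(z):=4z^{3/2}-(9/2)z+\xb\bar r_1/2$, which attains its minimum on $(0,\infty)$ at $z=9/16$ with value $-27/32+\xb\bar r_1/2$. Hence convexity is equivalent to $\xb\bar r_1\geq27/16$, which follows from $\bar r_1>r_1=(9-\sqrt{17})/8$ and the identity $27(9+\sqrt{17})/128=27/(16r_1)$ combined with the hypothesis $\xb>27(9+\sqrt{17})/128$. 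Second, monotonicity in $w$: $g_1(z,\cdot)$ is decreasing, so $\inf_{0\leq w\leq\bar r_1^2}g_1(z,w)=g_1(z,\bar r_1^2)$. Third, for positivity on $[r_1^2,\infty)$: since $\xb>27(9+\sqrt{17})/128>11/8$, the discriminant of $H''(u)=24u^2-36u+(8+4\xb)$ is negative, so $H''>0$ and $H(\cdot;\bar r_1)$ is strictly convex with a unique minimum; using $4r_1^2-9r_1+4=0$ one computes
\[
H'(r_1;r_1)=2r_1(4r_1^2-9r_1+4+\xb)=2\xb r_1>0,
\]
and by continuity $H'(r_1;\bar r_1)>0$ for $\bar r_1$ close to $r_1$, placing $r_1$ strictly to the right of the minimum of $H(\cdot;\bar r_1)$. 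Thus $H(\cdot;\bar r_1)$ is increasing on $[r_1,\infty)$ and $H(u;\bar r_1)\geq H(r_1;\bar r_1)>0$.

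The main obstacle is the delicate interplay between convexity and positivity: the concave term $-6z^{3/2}$ inherited from the cubic shifted drift obstructs convexity, and the correction $-2\xb\bar r_1\sqrt z$ supplies the missing curvature only when $\xb\bar r_1\geq27/16$. The coincidence that the unique interior maximizer of $f$ on $(0,1)$ and the point at which $\xb r_1=27/16$ are simultaneously the positive root $r_1=(9-\sqrt{17})/8$ of $4r^2-9r+4=0$ is what makes the stated thresholds on $\xb$ and $\bar\xs^2$ tight, and isolating this coincidence is the core of the argument.
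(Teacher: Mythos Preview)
Your argument is correct and follows essentially the same route as the paper: you derive the same function $g_1(z,w)=2z^2-6z^{3/2}+(4+2\xb)z-2\xb\sqrt{zw}-\bar\xs^2$, identify the optimal radius $r_1=(9-\sqrt{17})/8$ as the root of $4r^2-9r+4=0$ that maximizes $f(r)=2r^2(1-r)(2-r)$ on $(0,1)$, and then choose $\bar r_1$ slightly larger than $r_1$ by continuity. The only noteworthy difference is how you verify positivity of $g_1(\cdot,\bar r_1^2)$ on $[r_1^2,\infty)$: the paper reuses the already-established convexity of $g_1$ in $z$ together with $\partial_z g_1(r_1^2,\bar r_1^2)>0$, whereas you introduce the polynomial $H(u;\bar r)=g_1(u^2,\bar r^2)$ and check $H''>0$ via the discriminant of a quadratic, which is a pleasant computational shortcut but slightly redundant since you verify convexity of $g_1$ in $z$ separately anyway. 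One small point of presentation: you fix $\bar r_1$ once based on $H(r_1;\bar r_1)>0$ and then later say ``for $\bar r_1$ close to $r_1$'' when checking $H'(r_1;\bar r_1)>0$; it would be cleaner to choose $\bar r_1$ once, close enough to $r_1$ that both inequalities (and indeed $\bar r_1<2r_1$, which is what $H'(r_1;\bar r_1)=2\xb(2r_1-\bar r_1)>0$ actually requires) hold simultaneously.
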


\begin{proof}
    Let $\overline{\xs}:=\sup_{x\in\R}|\xs(x)|$. It suffices to show that there exist functions $g_{\pm 1}$ such that \eqref{1119-1} and \eqref{1119-2} hold with $a=\pm 1$.

     \noindent\textbf{Case a=1}.
We have for any $x\in \R, \mu\in\PP_2(\R)$,
    \begin{equation*}
        \begin{split}
            2xb(x+1,\mu)+|\xs(x+1)|^2
            &\leq -\big(2|x|^4-6|x|^3+(4+2\beta)|x|^2-2\beta |x|\|\mu^1\|_2-\overline{\xs}^2\big).
        \end{split}
    \end{equation*}
    Define $g_{1}$ as follows: for any $z,w\geq 0$,
    \begin{equation}\label{1127-7}
       g_{1}(z,w)=2z^2-6z^{\frac{3}{2}}+(4+2\beta)z-2\beta z^{\frac{1}{2}}w^{\frac{1}{2}}-\overline{\xs}^2.
    \end{equation}
    Then it is easy to check that \eqref{1119-1} holds. It remains to show that \eqref{1119-2} holds.
    Obviously, $g_1$ is continuous and $g_1(z,\cdot)$ is decreasing for any $z\geq 0$ and hence
    \begin{equation}\label{eq:new1207-1}
       \inf_{0\leq w\leq r^2}g_1(z,w)=g_1(z,r^2), \ \text{ for any } \ r>0.
    \end{equation}
    Note that
    \begin{equation*}
       \frac{\partial g_1(z,w)}{\partial z}=4z-9z^{\frac{1}{2}}+4+2\beta-\beta w^{\frac{1}{2}}z^{-\frac{1}{2}},
    \end{equation*}
    and
    \begin{equation*}
       \frac{\partial^2 g_1(z,w)}{\partial z^2}=4-\frac{9}{2}z^{-\frac{1}{2}}+\frac{1}{2}\beta w^{\frac{1}{2}}z^{-\frac{3}{2}}.
    \end{equation*}
    Since $r_1=\frac{9-\sqrt{17}}{8}$, by calculation, we have
    \begin{equation}\label{1127-6}
       \text{$g_1(\cdot,r_1^2)$ is convex if and only if } \ \beta\geq \frac{27}{16r_1}=\frac{27(9+\sqrt{17})}{128}.
    \end{equation}
     Thus, \eqref{1127-8} entails that $g_1(\cdot,r_1^2)$ is convex. 

    On the other hand, by a simple calculation and \eqref{1127-8}, we have
    \begin{equation*}
       g_{1}(r_1^2,r_1^2)=2r_1^2(r_1-1)(r_1-2)-|\overline{\xs}|^2=\frac{51\sqrt{17}-107}{256}-\overline{\xs}^2>0,
    \end{equation*}
    and
    \begin{equation*}
       \frac{\partial g_1}{\partial z}(r_1^2,r_1^2)=4r_1^2-9r_1+4+\beta=\beta>0.
    \end{equation*}
    Note that $g_1(\cdot,r_1^2)$ is convex, we conclude that
    \begin{equation*}
       \frac{\partial g_1}{\partial z}(z,r_1^2)\geq \frac{\partial g_1}{\partial z}(r_1^2,r_1^2)>0 \ \text{ for all } \ z\geq r_1^2,
    \end{equation*}
    and thus
    \begin{equation}\label{eq:new1207-2}
       g_1(z,r_1^2)\geq g_1(r_1^2,r_1^2)>0 \ \text{ for all } \ z\geq r_1^2.
    \end{equation}
    Combining \eqref{eq:new1207-1}-\eqref{eq:new1207-2}, we obtain \eqref{1119-2}. Hence, equation \eqref{eq:0229-1} is locally dissipative at $1$ with configuration $(r_{1},g_{1})$.

    \noindent\textbf{Case a=-1}. Choose $g_{-1}=g_1$ as in \eqref{1127-7}, similar to that of the case $a=1$, we can also prove that equation \eqref{eq:0229-1} is locally dissipative at $-1$ with configuration $(r_{-1},g_{-1})$.
\end{proof}

\section{Proof of Main Results}\label{Section:Proof-Main-Results}
Before proceeding with the proofs of our main theorems, we put at the beginning the common arguments.
They serve to verify the settings (M1)-(M5) proposed in Section \ref{Section:Results-MDS} with the prior information that $\mu_1<\mu_2$ are two order-related invariant measures of the McKean-Vlasov SDE \eqref{eq:mvsystem}.
Recall that $[\mu_1,\mu_2]_{\PP_2}:=\{\mu\in\PP_2(\R^d): \mu_1\leq\mu\leq\mu_2\}$.

\begin{enumerate}[label=(M\arabic*)]
\item Lemma \ref{lem:m1-norm-space} gives that $(\MM_1(\R^d),\norm{\cdot}_{\WW_1})$ is a normed space, and Lemma \ref{lem:m1-cone} shows that $C$ defined in \eqref{eq:cone} is a cone in $\MM_1(\R^d)$.
\item It is obvious that the order interval $[\mu_1,\mu_2]_{\PP_2}$ is not a singleton and is convex in $\MM_1(\R^d)$.
Under Assumption \ref{asp:lipschitz}, \ref{asp:dissipative-growth-nondegeneracy}, by Lemma \ref{Lemma: im in P_infty} and Lemma \ref{lem:order-interval-bounded}, we see that $[\mu_1,\mu_2]_{\PP_2}$ is bounded in $\PP_2(\R^d)$.
Lemma \ref{lem:compact-embed} and Lemma \ref{lem:stochastic-order-closed} result in the compactness of $[\mu_1,\mu_2]_{\PP_2}$ in $\PP_1(\R^d)$.
As Remark \ref{rem:p1-closed-m1} tells that $\PP_1(\R^d)$ is closed in $\MM_1(\R^d)$, we have $[\mu_1,\mu_2]_{\PP_2}$ is compact in $\MM_1(\R^d)$.
Lemma \ref{Lemma: im in P_infty} and Lemma \ref{lem:order-interval-bounded} entails  that $[\mu_1,\mu_2]_{\PP_2}$ is bounded in $\PP_p(\R^d)$ for all $p\geq1$, and thus a subset of $\PP_{\8}(\R^d)$.
Hence, by Lemma \ref{lem:m1-norm-space} and Lemma \ref{Lem: equavilent converge}, the $2$-Wasserstein metric on $[\mu_1,\mu_2]_{\PP_2}$ induces its relative topology in $\MM_1(\R^d)$.
Thus, $([\mu_1,\mu_2]_{\PP_2},\WW_2)$ is a non-singleton convex compact metric subspace of $(\MM_1(\R^d),\norm{\cdot}_{\WW_1})$.
\item Since the partial order induced by the cone $C$ in the normed space $\MM_1(\R^d)$ coincides with the stochastic order when restricted on $\PP_2(\R^d)$ (Lemma \ref{lem:m1-cone}), the infimum and the supremum of $[\mu_1,\mu_2]_{\PP_2}$ in $\MM_1(\R^d)$ are exactly endpoints $\mu_1$, $\mu_2$.
\item Under Assumption \ref{asp:lipschitz}, \ref{asp:cooperation}, the semigroup $\str P_t$ generated by \eqref{eq:mvsystem} is a monotone semiflow on $[\mu_1,\mu_2]_{\PP_2}$ by Theorem \ref{coro:mvsde-monotone}.
\item $\mu_1$ and $\mu_2$ are invariant measures (equilibria) of $\str P_t$.
\end{enumerate}

\begin{proof}[Proof of Theorem \ref{thm:existence-unstable}]
(i)-(ii). By Theorem \ref{Thm: two ordered im} (i), there exist $n$ order-related invariant measures $\mu_1<\mu_2<\cdots<\mu_n$, satisfying
 $\mu_i\in B_{\PP_2}(\xd_{a_i},r_{a_i})\cap \PP_{\infty}(\R^d)$ for all $i=1,2,\ldots,n$. 
The positive invariance of $B_{\PP_2}(\xd_{a_i},r_{a_i})$ in (i) follows from \eqref{0111-4}. 
On the other hand, Theorem \ref{Thm: two ordered im} (ii)  asserts that
there exists $T>0$ and $0<\tilde{r}_{a_i}<r_{a_i}, \ i=1,2,\cdots,n$ such that for all $t\geq T$,
\begin{equation}\label{E:shrink}
    \text{$\str P_t\overline{B_{\PP_2}(\xd_{a_i},r_{a_i})}\subset B_{\PP_2}(\xd_{a_i},\tilde{r}_{a_i})$, \ \ $i=1,2,\cdots,n$;}
\end{equation}
Now we are going to show \eqref{eq:nu notin ball} and (ii).
Fix any $i\in\{1,2,\cdots,n-1\}$.
Firstly, we give the following claim.

\noindent\textbf{Claim 1.} There exists an invariant measure $\nu_i\in[\mu_i,\mu_{i+1}]_{\PP_2}$ satisfying $\nu_{i}\notin \overline{B_{\PP_2}(\xd_{a_i},r_{a_i})}\bigcup\overline{B_{\PP_2}(\xd_{a_{i+1}},r_{a_{i+1}})}$ and there exists a decreasing connecting orbit $\{\mu_{i,i}(t)\}_{t\in\R}$ from $\nu_i$ to some invariant measure $\hat{\mu}_i\geq\mu_i$.

\noindent\textbf{Proof of Claim 1:} 
Let $E$ denote the set of all invariant measures of the equation \eqref{eq:mvsystem}. 
By (M2), $[\mu_i,\mu_{i+1}]_{\PP_2}$ is a nonempty compact set in $\PP_2(\R^d)$. 
Lemma \ref{L:MCT} (iii) entails that, $E\cap([\mu_i,\mu_{i+1}]_{\PP_2}\backslash B_{\PP_2}(\xd_{a_i},r_{a_i}))$ 
  contains a minimal point $\nu_i$. 
  \eqref{E:shrink} implies that 
  \begin{equation}\label{E:invariant measure not in boundary}
  E\cap \overline{B_{\PP_2}(\xd_{a_{i}},r_{a_{i}})}\subset B_{\PP_2}(\xd_{a_{i}},\tilde{r}_{a_{i}}).
  \end{equation}
Together with $\nu_i\notin B_{\PP_2}(\xd_{a_{i}},r_{a_{i}})$, we have
\begin{equation}\label{E:nu i not in closed ball}
\nu_i\notin\overline{B_{\PP_2}(\xd_{a_i},r_{a_i})}.
\end{equation}
  We \emph{assert} that 
  \begin{equation}\label{E:nui not in upper ball}
    \nu_i\notin\overline{B_{\PP_2}(\xd_{a_{i+1}},r_{a_{i+1}})}.  
  \end{equation}
  Otherwise, assume for contradiction that $\nu_i\in\overline{B_{\PP_2}(\xd_{a_{i+1}},r_{a_{i+1}})}$.
  Again, \eqref{E:shrink} implies that $E\cap \overline{B_{\PP_2}(\xd_{a_{i+1}},r_{a_{i+1}})}\subset B_{\PP_2}(\xd_{a_{i+1}},\tilde{r}_{a_{i+1}})$.
  Hence, $\nu_i\in B_{\PP_2}(\xd_{a_{i+1}},\tilde{r}_{a_{i+1}})$.
  Using again Lemma \ref{L:MCT} (iii), we can take a maximal point $\tilde{\mu}_i$ of the nonempty compact set $E\cap[\mu_i,\nu_i]_{\PP_2}\cap \overline{B_{\PP_2}(\xd_{a_i},r_{a_i})}$.
  \eqref{E:invariant measure not in boundary}
  entails that 
  $\tilde{\mu}_i\in B_{\PP_2}(\xd_{a_i},\tilde{r}_{a_i})$.
  By $r_{a_i}+r_{a_{i+1}}\leq \abs{a_{i+1}-a_i}$, $B_{\PP_2}(\xd_{a_{i}},\tilde{r}_{a_{i}})\cap B_{\PP_2}(\xd_{a_{i+1}},\tilde{r}_{a_{i+1}})=\emptyset$.
  Hence, $\tilde{\mu}_i\neq\nu_i$.
  Together with $\tilde{\mu}_i\in[\mu_i,\nu_i]$, we have $\tilde{\mu}_i<\nu_i$.
  By the choice of $\nu_i$ and $\tilde{\mu}_i$, one has
  \begin{equation}\label{E:no other invariant measures}
  E\cap[\tilde{\mu}_i,\nu_i]_{\PP_2}=\{\tilde{\mu}_i,\nu_i\}.
  \end{equation}
  Now we apply Theorem \ref{T:semiflow-tri} to the order interval $[\tilde{\mu}_i,\nu_i]_{\PP_2}$, 
\eqref{E:no other invariant measures} excludes Theorem \ref{T:semiflow-tri} (a).
However, \eqref{E:shrink} also excludes Theorem \ref{T:semiflow-tri} (b)-(c).
This contradiction completes the proof of \eqref{E:nui not in upper ball}.

By virtue of Lemma \ref{L:MCT} (iii), we can take a maximal point $\hat{\mu}_i$ of the nonempty compact set $E\cap[\mu_i,\nu_i]_{\PP_2}\cap \overline{B_{\PP_2}(\xd_{a_i},r_{a_i})}$.
\eqref{E:invariant measure not in boundary} and \eqref{E:nu i not in closed ball} implies that 
$\hat{\mu}_i\in B_{\PP_2}(\xd_{a_{i}},\tilde{r}_{a_{i}})$ and $\hat{\mu}_i\neq\nu_i$.
Applying Theorem \ref{T:semiflow-tri} to the order interval $[\hat{\mu}_i,\nu_i]_{\PP_2}$, 
the choice of $\nu_i$ and $\tilde{\mu}_i$ excludes Theorem \ref{T:semiflow-tri} (a). 
\eqref{E:shrink}, \eqref{E:nu i not in closed ball}, and $\hat{\mu}_i\in B_{\PP_2}(\xd_{a_{i}},\tilde{r}_{a_{i}})$ excludes Theorem  \ref{T:semiflow-tri} (b). So,  Theorem  \ref{T:semiflow-tri} (c) is valid in $[\hat{\mu}_i,\nu_i]_{\PP_2}$. Together with \eqref{E:nu i not in closed ball} and \eqref{E:nui not in upper ball}, we proved Claim 1.

Now, we are going to show the existence of an increasing connecting orbit in the order interval $[\nu_i,\mu_{i+1}]_{\PP_2}$. 
By Lemma \ref{L:MCT} (iii), we can first take a maximal point $\bar{\nu}_i$ of the nonempty compact set $E \cap\bigl([\nu_i,\mu_{i+1}]_{\PP_2}
        \setminus B_{\PP_2}(\xd_{a_{i+1}},r_{a_{i+1}})\bigr),$
and then a minimal point $\hat{\mu}_{i+1}$ of 
$E \cap [\bar{\nu}_i,\mu_{i+1}]_{\PP_2}  
\cap \overline{B_{\PP_2}(\xd_{a_{i+1}},r_{a_{i+1}})}.$
Applying Theorem  \ref{T:semiflow-tri} on the order interval $[\bar{\nu}_i,\hat{\mu}_{i+1}]_{\PP_2}$, 
and using arguments entirely analogous to those in Claim 1, 
we obtain an increasing connecting orbit from $\bar{\nu}_i$ to $\hat{\mu}_{i+1}$.
Since $[\bar{\nu}_i,\hat{\mu}_{i+1}]_{\PP_2}\subset [\nu_i,\mu_{i+1}]_{\PP_2}$, the increasing connecting orbit  also lies entirely within $[\nu_i,\mu_{i+1}]_{\PP_2}$.

(iii). From the definition, the forward and backward limits of any connecting orbit 
must be invariant measures. 
Therefore, (iii) is a direct consequence of (ii).
Hence, we finish the proof of Theorem \ref{thm:existence-unstable}.
\end{proof}

\begin{proof}[Proof of Theorem \ref{thm:double-well}]
By Proposition \ref{Prop: cubic symmetric}, the equation \eqref{eq:double-well} is locally dissipative at $\pm1$.
By Tugaut \cite[Theorem 2.1]{Tugaut2014} (see also Dawson \cite{Dawson1983}),  \eqref{eq:double-well} has at most three invariant measures, which means exactly $\mu_{-1},\mu_0,\mu_1$, and this proves (i).
Then the result directly follows from Theorem \ref{thm:existence-unstable}.
\end{proof}

\begin{proof}[Proof of Theorem \ref{thm:multi-well}]
Take
\begin{align*}
g_0(z,w)&=2z^3-10z^2+(8+2\xb)z-2\xb z^{\frac12}w^{\frac12}-\overline{\xs}^2,& r_0&=\frac{\sqrt{15-3\sqrt{13}}}{3},\\
g_2(z,w)&=2z^3-20z^{\frac52}+70z^2-100z^{\frac32}+(48+2\xb)z-2\xb z^{\frac12}w^{\frac12}-\xs^2,& r_2&=r_0,\\
g_{-2}(z,w)&=g_2(z,w),& r_{-2}&=r_0,
\end{align*}
 where $\overline{\xs}:=\sup_{x\in\R}|\xs(x)|$.
Through a similar calculation in Proposition \ref{Prop: cubic symmetric}, locally dissipative condition holds at $0, \pm2$ for the equation \eqref{eq:multi-well}. Then Theorem \ref{thm:existence-unstable} gives the result. 
\end{proof}

\begin{proof}[Proof of Theorem \ref{thm:double-well-perturbation}]
Take
\begin{align*}
g_1(z,w)&=2z^2-6z^{\frac32}+(4+2\xb)z-\frac23z^{\frac12}-2\xb z^{\frac12}w^{\frac12}-\overline{\xs}^2,& r_1&=\frac{\sqrt5}5,\\
g_{-1}(z,w)&=g_1(z,w),& r_{-1}&=r_1,
\end{align*}
 where $\overline{\xs}:=\sup_{x\in\R}|\xs(x)|$.
Similar to the calculation in Proposition \ref{Prop: cubic symmetric}, locally dissipative conditions hold at $\pm1$ for the equation \eqref{eq:double-well-perturbation}.
The result follows from Theorem \ref{thm:existence-unstable}.
\end{proof}

\begin{proof}[Proof of Theorem \ref{thm:high-dimension}]
    For any $\mu\in\PP_2(\R^2)$, denote by $\mu_x$ and $\mu_y$ the marginal distributions of $\mu$ with respect to the first ($x$-) and the second ($y$-) component, respectively.
    Note that equation \eqref{eq:higher-dimensional} can be written as
    \begin{equation*}
        \d (X_t,Y_t)^{\top}=b\big((X_t,Y_t),\LL((X_t,Y_t))\big)\d t+\Sigma \d W_t,
    \end{equation*}
    where for all $(x,y)\in\mathbb{R}^2, \ \mu\in\PP_2(\R^2)$,
\begin{equation*}
    b((x,y),\mu)=
    \begin{pmatrix}
     x-x^3-\tau\alpha\Big(x-\int_{\mathbb{R}}z\d\mu_x(z)\Big)-(1-\tau)\beta\Big(x-\int_{\mathbb{R}}z\d\mu_y(z)\Big) \\
    y-y^3-\tau\alpha\Big(y-\int_{\mathbb{R}}z\d\mu_x(z)\Big)-(1-\tau)\beta\Big(y-\int_{\mathbb{R}}z\d\mu_y(z)\Big)
    \end{pmatrix}
\end{equation*}
and
\begin{equation*}
    \Sigma=
    \begin{pmatrix}
     \xs & 0  \\
     0& \xs
    \end{pmatrix}, \ \ 
    W_t=\begin{pmatrix}
     W^1_t \\
     W^2_t 
    \end{pmatrix}.
\end{equation*}
Then we show that equation \eqref{eq:higher-dimensional} with the parameter range in \eqref{eq:parameter-high-dimensional} is locally dissipative at $(1,1), \ (-1,-1)$ with radius $r_{(1,1)}=r_{(-1,-1)}=\frac{1}{4}$.

     \noindent\textbf{Case $a=(1,1)$}.
Note that for any $(x,y)\in \R^2, \mu\in\PP_2(\R^2)$,
    \begin{align*}
        &\no{=}2\big\langle (x,y), b((x+1,y+1),\mu)\big\rangle+\|\Sigma\|_2^2\\
        &=-2x^4-6x^3-(4+2\tau\xa+2(1-\tau)\beta)x^2\\
        &\quad\quad\quad\quad\quad\quad\quad \ +2\tau\xa x\int_{\R}z\d\mu_x^{(1,1)}(z)+2(1-\tau)\xb x\int_{\R}z\d\mu_y^{(1,1)}(z)\\
        &\ \ \ \, -2y^4-6y^3-(4+2\tau\xa+2(1-\tau)\beta)y^2\\
        &\quad\quad\quad\quad\quad\quad\quad \ +2\tau\xa y\int_{\R}z\d\mu_x^{(1,1)}(z)+2(1-\tau)\xb y\int_{\R}z\d\mu_y^{(1,1)}(z)+2\xs^2\\
        &\leq -|(x,y)|^4+6|(x,y)|^3-(4+2\tau\alpha+2(1-\tau)\xb)|(x,y)|^2\\
        &\ \ \ \ +2\tau\xa(x+y)\int_{\R}z\d\mu_x^{(1,1)}(z)+2(1-\tau)\xb(x+y)\int_{\R}z\d\mu_y^{(1,1)}(z)+2\xs^2\\
        &\leq -|(x,y)|^4+6|(x,y)|^3-(4+2\tau\alpha+2(1-\tau)\xb)|(x,y)|^2\\
        &\ \ \ \ +2\max\{2\tau\xa,2(1-\tau)\xb\}|x+y|\bigg(\bigg|\int_{\R}z\d\mu_x^{(1,1)}(z)\bigg|+\bigg|\int_{\R}z\d\mu_y^{(1,1)}(z)\bigg|\bigg)+2\xs^2\\
        &\leq -\Big(|(x,y)|^4-6|(x,y)|^3+(4+2\tau\alpha+2(1-\tau)\xb)|(x,y)|^2\\
        &\ \ \ \quad\quad\quad\quad\quad\quad\quad \ -2\max\{2\tau\xa, 2(1-\tau)\xb\}|(x,y)|\|\mu^{(1,1)}\|_2-2\xs^2\Big),
    \end{align*}
    where we have used the fact that
    \[
    \int_{\R}z\d\mu^{(1,1)}_x(z)=\int_{\R}z\d\mu_x(z)-1, \ \ \ \ \int_{\R}z\d\mu^{(1,1)}_y(z)=\int_{\R}z\d\mu_y(z)-1,
    \]
    and
    \begin{equation*}
        \begin{split}
            |x+y|\Big(\Big|\int_{\R}z\d\mu_x(z)\Big|+\Big|\int_{\R}z\d\mu_y(z)\Big|\Big)
            &\leq 2|(x,y)|\Big(\int_{\R}|z|^2\d\mu_x(z)+\int_{\R}|z|^2\d\mu_y(z)\Big)^{\frac{1}{2}}\\
            &=2|(x,y)|\|\mu\|_2.
        \end{split}
    \end{equation*}
    Hence we choose $g_{(1,1)}$ as follows: for any $z,w\geq 0$,
    \begin{equation}\label{eq:pf-high-dimensional-1}
       g_{(1,1)}(z,w)=z^2-6z^{\frac{3}{2}}+(4+2\tau\alpha+2(1-\tau)\beta)z-2\max\{2\tau\xa, 2(1-\tau)\xb\} z^{\frac{1}{2}}w^{\frac{1}{2}}-2\xs^2.
    \end{equation}
    It is easy to verify 
    \begin{equation}\label{eq:1207-1}
        2\big\langle (x,y), b((x+1,y+1),\mu)\big\rangle+\|\Sigma\|_2^2\leq -g_{(1,1)}\big(|(x,y)|^2, \|\mu^{(1,1)}\|_2^2\big).
    \end{equation}
    Obviously, $g_{(1,1)}$ is continuous and $g_{(1,1)}(z,\cdot)$ is decreasing for any $z\geq 0$ and hence
    \begin{equation}\label{eq:1207-2}
       \inf_{0\leq w\leq r_{(1,1)}^2}g_{(1,1)}(z,w)=g_{(1,1)}(z,r_{(1,1)}^2).
    \end{equation}
    Note that
    \begin{equation*}
       \frac{\partial g_{(1,1)}(z,w)}{\partial z}=2z-9z^{\frac{1}{2}}+4+2\tau\alpha+2(1-\tau)\beta-\max\{2\tau\xa, 2(1-\tau)\xb\} w^{\frac{1}{2}}z^{-\frac{1}{2}},
    \end{equation*}
    and
    \begin{equation*}
       \frac{\partial^2 g_{(1,1)}(z,w)}{\partial z^2}=2-\frac{9}{2}z^{-\frac{1}{2}}+\max\{\tau\xa, (1-\tau)\xb\} w^{\frac{1}{2}}z^{-\frac{3}{2}}.
    \end{equation*}
    Note that $r_{(1,1)}=\frac{1}{4}$, then by calculation, we have 
    \begin{equation}\label{eq:pf-high-dimensional-2}
       \text{$g_{(1,1)}\big(\cdot,r_{(1,1)}^2\big)$ is convex if and only if  } \ \max\{\tau\xa, (1-\tau)\xb\}\geq \frac{27}{8r_{(1,1)}}=\frac{27}{2}.
    \end{equation}
    Then \eqref{eq:parameter-high-dimensional} and \eqref{eq:pf-high-dimensional-2} entail that $g_{(1,1)}(\cdot,r_{(1,1)}^2)$ is convex. Moreover, it follows from \eqref{eq:parameter-high-dimensional} that
    \begin{equation*}
       g_{(1,1)}\big(r_{(1,1)}^2,r_{(1,1)}^2\big)=r_{(1,1)}^4-6r_{(1,1)}^3+4r_{(1,1)}^2-2|\tau\xa-(1-\tau)\xb|r_{(1,1)}^2-2\xs^2\geq \frac{9}{256}-2\xs^2>0, \ \ 
    \end{equation*}
    and
    \begin{equation*}
        \frac{\partial g_{(1,1)}}{\partial z}\big(r_{(1,1)}^2,r_{(1,1)}^2\big)=2r_{(1,1)}^2-9r_{(1,1)}+4+\min\{2\tau\xa, 2(1-\tau)\xb\}=\frac{15}{8}+\min\{2\tau\xa, 2(1-\tau)\xb\}>0.
    \end{equation*}
    Since $g_{(1,1)}(\cdot,r_{(1,1)}^2)$ is convex, we have
    \begin{equation*}
       \frac{\partial g_{(1,1)}}{\partial z}\big(z,r_{(1,1)}^2\big)\geq \frac{\partial g_{(1,1)}}{\partial z}\big(r_{(1,1)}^2,r_{(1,1)}^2\big)>0 \ \text{ for all } \ z\geq r_{(1,1)}^2,
    \end{equation*}
    and thus
    \begin{equation}\label{eq:1207-3}
       g_{(1,1)}\big(z,r_{(1,1)}^2\big)\geq g_{(1,1)}\big(r_{(1,1)}^2,r_{(1,1)}^2\big)>0 \ \text{ for all } \ z\geq r_{(1,1)}^2.
    \end{equation}
    Combining \eqref{eq:1207-1}-\eqref{eq:1207-3}, we conclude that equation \eqref{eq:higher-dimensional} is locally dissipative at $(1,1)$ with radius $r_{(1,1)}=\frac{1}{4}$. 

    \noindent\textbf{Case $a=(-1,-1)$}. Choose $g_{(-1,-1)}=g_{(1,1)}$ as in \eqref{eq:pf-high-dimensional-1} and $r_{(-1,-1)}=\frac{1}{4}$. By the same argument as in the case of $a=(1,1)$, we an prove equation \eqref{eq:higher-dimensional} is also locally dissipative at $(-1,-1)$ with radius $r_{(-1,-1)}=\frac{1}{4}$. 

    According to \cite[Theorem 3.2]{Duong-Pavliotis-Tugaut2025}, the equation \eqref{eq:higher-dimensional} has at most three invariant measures.
    Then the result follows directly from Theorem \ref{thm:existence-unstable}.
\end{proof}

 \section*{Acknowledgement}
 We acknowledge the financial supports of EPSRC grant ref (ref. EP/S005293/2), Royal Society through the Award of
 Newton International Fellowship (ref. NIF/R1/221003), National Natural Science Foundation of China (No. 12501184, No. 12171280), and CSC of China (No. 202206340035),  the
		Postdoctoral Fellowship Program and China Postdoctoral
		Science Foundation under Grant Number BX20250067, and the China Postdoctoral Science
		Foundation under Grant Number 2025M773074.

\begin{appendices}

\section{Proof of Lemma \ref{T:map-tri}}\label{Appendix}
In the appendix, we  finish the proof of Lemma \ref{T:map-tri}. 
Thanks to the fixed point index lemma for metrizable convex compact subsets of Hausdorff locally convex topological vector spaces (Lemma \ref{L:fixed pt index}), the proof of Lemma \ref{T:map-tri} follows exactly the same  argument of Dancer–Hess \cite[Proposition 1]{DH91} for monotone mappings on compact order intervals in Banach spaces.
We give the detail here for the sake of completeness. 
Now, we fix the following settings.

\begin{enumerate}[label=(H\arabic*)]
\item $(V,\mathcal{T})$ is a Hausdorff locally convex topological vector space;
\item $(S,d)$ is a convex compact metric subspace of $(V,\mathcal{T})$, where $d$ is a metric on $S$ inducing the relative topology on $S$.
\end{enumerate}

Hereafter, for any subset $G\subset S$, the closure  $\overline G$ denotes the closure  of $G$ relative to the topology on $S$. In order to prove Lemma \ref{T:map-tri}, we need the following classical fixed point index lemma (see e.g., \cite{N71, N93}).

\begin{lemma}\label{L:fixed pt index}
Assume that (H1)-(H2) hold. Then there exists an integer valued function $i(f,G)$ defined for any relatively open subset $G\subset S$ and continuous map $f:\overline G\to S$ with no fixed point in $\overline G\backslash G$, satisfying
\begin{enumerate}[label=\textnormal{(\roman*)}]
    \item (Additivity). If $G=S$, and $G_1$, $G_2$ are relatively open in $S$, $G_1\cap G_2=\emptyset$ and all fixed points of $f$ lie in $G_1\cup G_2$, then $i(f,S)=i(f,G_1)+i(f,G_2)$;
    \item (Homotopy Invariance). If $F:\overline G\times[0,1]\to S$ is a continuous map, and $F_{\lambda}(x):=F(x,\lambda)$ has no fixed point in $\overline G\backslash G$ for all $\lambda\in[0,1]$, then $i(F_0,G)=i(F_1,G)$;
    \item (Normalisation). If there exists $y\in G$ such that $f(x)=y$ for all $x\in\overline G$, then $i(f,G)=1$.
\end{enumerate}
\end{lemma}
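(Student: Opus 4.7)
The plan is to construct $i(f,G)$ by reducing to the classical Brouwer degree through Schauder-type projections, which is feasible because $S$ is compact convex in the Hausdorff locally convex space $(V,\mathcal{T})$.

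First I would treat the finite-dimensional case. Suppose $S$ lies in a finite-dimensional affine subspace $E\subset V$. Since $S$ is a compact convex subset of $E$, it is an absolute retract, so any continuous $f:\overline G\to S$ admits a continuous extension $\tilde f$ to a bounded open neighbourhood $\Omega$ of $\overline G$ in $E$ taking values in $S$. Because $f$ has no fixed point on $\overline G\setminus G$, one may arrange that $I-\tilde f$ is nonzero on $\Omega\setminus G$, and I define $i(f,G):=\deg(I-\tilde f,G,0)$, the Brouwer degree in $E$. Classical degree theory then yields Additivity, Homotopy Invariance, and Normalisation in this finite-dimensional setting, and the construction is independent of the choice of extension by homotopy invariance of the degree.

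Next, I would pass to the general case via Schauder projections. Compactness of $S$ in $V$ provides, for every open neighbourhood $U$ of $0\in V$, finitely many points $x_1,\dots,x_n\in S$ and a continuous map $\pi_U:S\to S_U:=\mathrm{conv}\{x_1,\dots,x_n\}\subset S$ with $\pi_U(x)-x\in U$ for all $x\in S$; explicitly, $\pi_U(x)=\sum_i\lambda_i(x)x_i$ via a partition of unity subordinate to the cover $\{x_i+U\}_{i=1}^n$ of $S$. Since $f$ has no fixed point on the compact set $\overline G\setminus G$, a standard compactness argument yields a neighbourhood $U_0$ of $0$ such that $\pi_U\circ f$ also has no fixed point on $\overline G\setminus G$ whenever $U\subset U_0$. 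I then set
\[
i(f,G):=\deg\bigl(I-\pi_U\circ f,\; G\cap S_U,\; 0\bigr),
\]
the finite-dimensional index constructed in the first step inside $\mathrm{aff}(S_U)$.

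The main obstacle is the well-definedness of this reduction, i.e.\ independence from the choices of $U$ and of the approximating points. Given two admissible projections $\pi_0,\pi_1$, I would form the linear homotopy $H_\lambda:=(1-\lambda)\pi_0\circ f+\lambda\pi_1\circ f$, whose image lies in the finite-dimensional compact convex set $S_\star:=\mathrm{conv}(S_{U_0}\cup S_{U_1})\subset S$. For $U_0,U_1$ taken small enough, the homotopy remains fixed-point free on $\overline G\setminus G$, so the finite-dimensional homotopy invariance inside $\mathrm{aff}(S_\star)$ forces the two indices to agree. Coercing both projections into a common ambient finite-dimensional subspace is the delicate point, but it rests on the fact that finite unions of finite-dimensional compact convex pieces of $S$ retain finite-dimensional convex hulls contained in $S$.

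Finally, the three axioms of Lemma \ref{L:fixed pt index} transfer from the Brouwer theory. For Additivity, a single projection fine enough to separate the disjoint relatively open sets $G_1,G_2$ reduces the claim to additivity of the Brouwer degree on disjoint open pieces of $S\cap S_U$. For Normalisation, if $f\equiv y\in G$, I include $y$ among the approximating points; then $\pi_U\circ f\equiv y$ and the finite-dimensional degree equals $1$. For Homotopy Invariance, joint compactness of $\overline G\times[0,1]$ lets me choose a single $U$ and projection $\pi_U$ so that $\pi_U\circ F_\lambda$ is fixed-point free on $\overline G\setminus G$ uniformly in $\lambda$, whence finite-dimensional Brouwer homotopy invariance concludes the proof.
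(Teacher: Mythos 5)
The paper does not actually prove Lemma \ref{L:fixed pt index}; it defers entirely to the cited literature (\cite{FP87,N51,N71}), and your construction is precisely the classical one those references carry out: reduce to the Brouwer degree on finite-dimensional compact convex pieces via Schauder projections, check well-definedness by a linear homotopy inside a common finite-dimensional convex hull, and transfer the three axioms. Your outline is correct; the only slip is in the normalisation step, where including $y$ among the approximating points does not force $\pi_U\circ f\equiv y$ (the partition-of-unity weights at $y$ may charge several $x_i$), but this is harmless since $\pi_U\circ f$ is still a constant lying in $y+U$, hence in $G\cap S_U$ for $U$ small, and the finite-dimensional normalisation applies to that constant instead.
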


Besides (H1)-(H2), assume further there is a cone $C\subset V$, which induces a closed partial order relation $\leq$ on $V$ (as we introduced in Section \ref{Subsection:Properties-Order}). A point   $x\in S$ is called a (strict) \emph{subsolution} of a mapping $\Psi:S\to S$, if $\Psi(x)\geq x$ ($\Psi(x)>x$). Similarly, $x\in S$ is said to be a (strict) \emph{supersolution} of $\Psi:S\to S$, if $\Psi(x)\leq x$ ($\Psi(x)<x$).

\begin{proof}[Proof of Lemma \ref{T:map-tri}]
Assume that there is no further fixed point of $\Psi$ distinct from $p=\inf S, q=\sup S$ in $S$. We are going to prove (b) or (c) holds.

Define maps 
$$F_{\lambda}(x):=\lambda\Psi(x)+(1-\lambda)p,\ \ x\in S$$ 
and 
$$\tilde{F}_{\lambda}(x):=\lambda\Psi(x)+(1-\lambda)q,\ \ x\in S$$ 
for any $\lambda\in[0,1]$. Clearly, $F$ and $\tilde{F}$ are continuous maps from $S\times[0,1]$ to $S$. Noticing that $F_1=\tilde{F}_1=\Psi$, $F_0(x)=p$ and $\tilde{F}_0(x)=q$ for any $x\in S$.

We claim that, if $F_{\lambda}(x)=x$ for some $x\in S\backslash\{p,q\}$, $\lambda\in[0,1]$, then 
\begin{equation}\label{E:homo-sub}
 \Psi(x)>x.  
\end{equation}
Similarly, if $\tilde{F}_{\lambda}(x)=x$ for some $x\in S\backslash\{p,q\}$, $\lambda\in[0,1]$, then 
\begin{equation}\label{E:homo-sup}
\Psi(x)<x.  
\end{equation}
We only prove the former case, as the latter one is same. In fact, if  $F_{\lambda}(x)=x$ for some $x\in S\backslash\{p,q\}$, then one has $\lambda\neq0,1$. By 
$\lambda(\Psi(x)-x)=(1-\lambda)(x-p),$
we have $\Psi(x)>x$, \eqref{E:homo-sub} is proved.

Let $r>0$ be such that $r<\frac{1}{2}d(p,q)$. Define $B_S(y,\epsilon)=\{x\in S:d(x,y)<\epsilon\}$, for any $y\in S$ and $\epsilon>0$. $\overline{B_S(y,\epsilon)}$ and $\partial B_S(y,\epsilon)$ means the closure and boundary of $B_S(y,\epsilon)$ in $S$. Now, we give the following claim.

\noindent\textbf{Claim 1.} Either there exists a strict subsolution $x_{\epsilon}$ on $\partial B_S(p,\epsilon)$ for any $0<\epsilon<r$, or else there exists a strict supersolution $x_{\epsilon}$ on $\partial B_S(q,\epsilon)$ for any $0<\epsilon<r$.

\noindent\textbf{Proof of Claim 1:} If there exists $0<\epsilon_0<r$ such that there is no strict supersolution on $\partial B_S(q,\epsilon_0)$. By \eqref{E:homo-sup}, $\tilde{F}_{\lambda}(x)\neq x$, for any $x\in\partial B_S(q,\epsilon_0)$, $\lambda\in[0,1]$. Considering the continuous map $\tilde{F}$ on $\overline{B_S(q,\epsilon_0)}\times[0,1]$, Lemma \ref{L:fixed pt index} (ii)(iii) entail that 
\begin{equation*}
i(\Psi,B_S(q,\epsilon_0))=i(\tilde{F}_1,B_S(q,\epsilon_0))=i(\tilde{F}_0,B_S(q,\epsilon_0))=1.
\end{equation*}
Similarly, we also have $i(\Psi,S)=1$. By Lemma \ref{L:fixed pt index} (i),
\begin{equation*}
i(\Psi,S)=i(\Psi,B_S(q,\epsilon_0))+i(\Psi,B_S(p,\epsilon)),
\end{equation*}
which implies that $i(\Psi,B_S(p,\epsilon))=0$, for any $0<\epsilon<r$.
Suppose on the contrary that, there exists  $0<\epsilon_1<r$ such that there is no strict subsolution on $\partial B_S(p,\epsilon_1)$. By \eqref{E:homo-sub}, $F_{\lambda}(x)\neq x$, for any $x\in\partial B_S(p,\epsilon_1)$, $\lambda\in[0,1]$. Considering the continuous map $F$ on $\overline{B_S(p,\epsilon_1)}\times[0,1]$, Lemma \ref{L:fixed pt index} (ii)(iii) entail that 
\begin{equation}\label{E:F0 index 0}
i(F_0,B_S(p,\epsilon_1))=i(F_1,B_S(p,\epsilon_1))=i(\Psi,B_S(p,\epsilon_1))=0.
\end{equation}
Recall that $F_0(x)=p$, for any $x\in\overline{B_S(p,\epsilon_1)}$. Thus, \eqref{E:F0 index 0} contradicts Lemma \ref{L:fixed pt index} (iii). Hence, we obtain Claim 1.

Without loss of generality, we assume that the first case in Claim 1 holds, that is, there exists a strict subsolution $x_{\epsilon}$ on $\partial B_S(p,\epsilon)$ for any $0<\epsilon<r$. Then, we can choose a sequence $\{x_k\}_{k\geq 1}$ in $S$ such that
$x_k\to p$ as $k\to\infty$, and 
$$p<x_k<\Psi(x_k)\leq\Psi^2(x_k)\leq\Psi^3(x_k)\leq\cdots.$$
Since there is no further fixed point of $\Psi$ distinct from $p, q$ in $S$, it follows from Lemma \ref{L:MCT} (iv) that for each $k\geq 1$, 
\begin{equation}\label{E:Psinxktoq}
\Psi^n(x_k)\to q, \ \text{as}\  n\to\infty. 
\end{equation}
By continuity of $\Psi$ and $p$ being a fixed point of $\Psi$, there exists $\delta_i>0$ for $i\geq0$ such that
$$r>\delta_0>\delta_1>\delta_2>\cdots\to0,$$
and
\begin{equation}\label{E:cts-control}
   \Psi(B_S(p,\delta_i))\subset B_S(p,\delta_{i-1}) \text{ for any  } i\geq 1.
\end{equation}
\eqref{E:Psinxktoq} and \eqref{E:cts-control} entails that, for any $x_i\in B_S(p,\delta_i)$ with $i>1$, there exists $j(i)\geq i-1$ such that $y_i:=\Psi^{j(i)}(x_i)\in  B_S(p,\delta_{0})\backslash B_S(p,\delta_1)$. Since $S$ is compact, $y_i$ has a subsequence $y_{i^\prime}$  converging to some point $u_0\in\overline{B_S(p,\delta_{0})}\backslash B_S(p,\delta_1)$. Since $y_i\leq\Psi(y_i)$, we have 
\begin{equation}\label{E:posi-seq}
p<u_0\leq\Psi(u_0)\leq\Psi^2(u_0)\leq\cdots.
\end{equation}
Together with the fact that there is no further fixed point of $\Psi$ distinct from $p, q$ in $S$, Lemma \ref{L:MCT} (iv) implies that $\Psi^n(u_0)\to q$, as $n\to\infty$.

Since $S$ is compact and $j(i')\geq i'-1$, one has a subsequence $\Psi^{j(i'')-1}(x_{i''})$ of $\Psi^{j(i')-1}(x_{i'})$ converging to some point $u_{-1}$ such that $\Psi(u_{-1})=u_0$. And $\Psi^{j(i'')-1}(x_{i''})\leq \Psi^{j(i'')}(x_{i''})$ entails that $u_{-1}\leq u_0$. Since $S$ is compact and $j(i)\geq i-1$, recursively, we get an increasing negative orbit $\{u_{i}\}_{i\in\mathbb{Z}_-}$.
That is to say, 
\[
\cdots\leq u_{-2}\leq u_{-1}\leq u_0 \textnormal{ and }\Psi(u_{i})=u_{i+1},\textnormal{ for any }i\leq-1.
\]
Lemma \ref{L:MCT} (iv) guarantees that $u_{i}$ converges to a fixed point of $\Psi$, as $i\to-\infty$. Recall that $u_0\in\overline{B_S(p,\delta_{0})}$ and $\delta_0<r<\frac{1}{2}d(p,q)$, which entails that $u_0<q$. Since there is no further fixed point of $\Psi$ in $S$ except $p,q$, we have $u_{i}$ converges to $p$ as $i\to-\infty$. Therefore, togethter with \eqref{E:posi-seq}, we have proved Lemma \ref{T:map-tri} (b).
\end{proof}

\end{appendices}

\clearpage
\phantomsection
\addcontentsline{toc}{section}{References}
\linespread{1.0}
\selectfont
\bibliographystyle{siam}
\bibliography{reference}

@article{Duong-Pavliotis-Tugaut2025,
  title={Multi-species McKean-Vlasov dynamics in non-convex landscapes},
  author={Duong, Manh Hong and Pavliotis, Grigorios A and Tugaut, Julian},
  journal={arXiv preprint arXiv:2507.07617},
  year={2025}
}

@article{Bao-Wang2025,
  title={Stationary distributions of McKean-Vlasov SDEs with jumps: existence, uniqueness, and multiplicity},
  author={Bao, Jianhai and Wang, Jian},
  journal={arXiv preprint arXiv:2504.15898},
  year={2025}
}

@article {Ahmed-Ding1993,
    AUTHOR = {Ahmed, N. U. and Ding, Xinhong},
     TITLE = {On invariant measures of nonlinear {M}arkov processes},
   JOURNAL = {J. Appl. Math. Stochastic Anal.},
  FJOURNAL = {Journal of Applied Mathematics and Stochastic Analysis},
    VOLUME = {6},
      YEAR = {1993},
    NUMBER = {4},
     PAGES = {385--406},
      ISSN = {1048-9533,1687-2177},
   MRCLASS = {60J25 (60H10)},
  MRNUMBER = {1255258},
MRREVIEWER = {Constantin\ Tudor},
       DOI = {10.1155/S1048953393000310},
       URL = {https://doi.org/10.1155/S1048953393000310},
}

@article {Delgadino-Gvalani-Pavliotis2021,
    AUTHOR = {Delgadino, Matias G. and Gvalani, Rishabh S. and Pavliotis,
              Grigorios A.},
     TITLE = {On the diffusive-mean field limit for weakly interacting
              diffusions exhibiting phase transitions},
   JOURNAL = {Arch. Ration. Mech. Anal.},
  FJOURNAL = {Archive for Rational Mechanics and Analysis},
    VOLUME = {241},
      YEAR = {2021},
    NUMBER = {1},
     PAGES = {91--148},
      ISSN = {0003-9527},
   MRCLASS = {82C22 (60J60 82C31)},
  MRNUMBER = {4271956},
MRREVIEWER = {Julien Reygner},
       DOI = {10.1007/s00205-021-01648-1},
       URL = {https://doi.org/10.1007/s00205-021-01648-1},
}

@article {Carrillo-Gvalani-Pavliotis-Schlichting2020,
    AUTHOR = {Carrillo, J. A. and Gvalani, R. S. and Pavliotis, G. A. and
              Schlichting, A.},
     TITLE = {Long-time behaviour and phase transitions for the
              {M}c{K}ean-{V}lasov equation on the torus},
   JOURNAL = {Arch. Ration. Mech. Anal.},
  FJOURNAL = {Archive for Rational Mechanics and Analysis},
    VOLUME = {235},
      YEAR = {2020},
    NUMBER = {1},
     PAGES = {635--690},
      ISSN = {0003-9527},
   MRCLASS = {35R60 (35B40 35K59 60K35)},
  MRNUMBER = {4062483},
MRREVIEWER = {Alp O. Eden},
       DOI = {10.1007/s00205-019-01430-4},
       URL = {https://doi.org/10.1007/s00205-019-01430-4},
}

@article{Feng-Qu-Zhao2023,
      title={Entrance measures for semigroups of time-inhomogeneous {SDE}s: possibly degenerate and expanding}, 
      author={Chunrong Feng and Baoyou Qu and Huaizhong Zhao},
      year={2023},
      journal={arXiv: 2307.07891},
      archivePrefix={arXiv},
      primaryClass={math.PR}
}

@article{Feng-Qu-Zhao2023b,
      title={{Entrance measures for time-inhomogeneous McKean-Vlasov stochastic differential equations}}, 
      author={Chunrong Feng and Baoyou Qu and Huaizhong Zhao},
      year={2023},
     journal={Preprint},
}

@incollection {Hairer-Mattingly2011,
    AUTHOR = {Hairer, Martin and Mattingly, Jonathan C.},
     TITLE = {Yet another look at {H}arris' ergodic theorem for {M}arkov
              chains},
 BOOKTITLE = {Seminar on {S}tochastic {A}nalysis, {R}andom {F}ields and
              {A}pplications {VI}},
    SERIES = {Progr. Probab.},
    VOLUME = {63},
     PAGES = {109--117},
 PUBLISHER = {Birkh\"{a}user/Springer Basel AG, Basel},
      YEAR = {2011},
   MRCLASS = {60J05 (37A30 37A50 47D07)},
  MRNUMBER = {2857021},
MRREVIEWER = {Wojciech Bartoszek},
       DOI = {10.1007/978-3-0348-0021-1\_7},
       URL = {https://doi.org/10.1007/978-3-0348-0021-1_7},
}

@article {panpanren22,
    AUTHOR = {Ren, Panpan},
     TITLE = {Order preservation and positive correlation for nonlinear
              {F}okker-{P}lanck equation},
   JOURNAL = {Electron. Commun. Probab.},
  FJOURNAL = {Electronic Communications in Probability},
    VOLUME = {27},
      YEAR = {2022},
     PAGES = {Paper No. 26, 12},
      ISSN = {1083-589X},
   MRCLASS = {60J60 (60H30)},
  MRNUMBER = {4424032},
MRREVIEWER = {Ren\ Ming\ Song},
       DOI = {10.1214/22-ecp466},
       URL = {https://doi.org/10.1214/22-ecp466},
}

@book {Villani2009,
    AUTHOR = {Villani, C\'{e}dric},
     TITLE = {Optimal transport},
    SERIES = {Grundlehren der mathematischen Wissenschaften [Fundamental
              Principles of Mathematical Sciences]},
    VOLUME = {338},
      NOTE = {Old and new},
 PUBLISHER = {Springer-Verlag, Berlin},
      YEAR = {2009},
     PAGES = {xxii+973},
      ISBN = {978-3-540-71049-3},
   MRCLASS = {49-02 (28A75 37J50 49Q20 53C23 58E30)},
  MRNUMBER = {2459454},
MRREVIEWER = {Dario Cordero-Erausquin},
       DOI = {10.1007/978-3-540-71050-9},
       URL = {https://doi.org/10.1007/978-3-540-71050-9},
}

@article {Wang18,
    AUTHOR = {Wang, Feng-Yu},
     TITLE = {Distribution dependent {SDE}s for {L}andau type equations},
   JOURNAL = {Stochastic Process. Appl.},
  FJOURNAL = {Stochastic Processes and their Applications},
    VOLUME = {128},
      YEAR = {2018},
    NUMBER = {2},
     PAGES = {595--621},
      ISSN = {0304-4149},
   MRCLASS = {60J75 (47G20 60G52 60H10)},
  MRNUMBER = {3739509},
MRREVIEWER = {Hong Zhang},
       DOI = {10.1016/j.spa.2017.05.006},
       URL = {https://doi.org/10.1016/j.spa.2017.05.006},
}

@article{Alecio2023,
title={Phase transitions of {McKean-Vlasov SDEs} in Multi-well Landscapes}, 
author={Alexander Alecio},
year={2023},
journal={arXiv: 2307.16846},
archivePrefix={arXiv},
primaryClass={math.PR}
}

@article {Tugaut2013,
    AUTHOR = {Tugaut, Julian},
     TITLE = {Convergence to the equilibria for self-stabilizing processes
              in double-well landscape},
   JOURNAL = {Ann. Probab.},
  FJOURNAL = {The Annals of Probability},
    VOLUME = {41},
      YEAR = {2013},
    NUMBER = {3A},
     PAGES = {1427--1460},
      ISSN = {0091-1798,2168-894X},
   MRCLASS = {60H10 (35B40 35K55 60B10 60J60)},
  MRNUMBER = {3098681},
MRREVIEWER = {Wilhelm\ Stannat},
       DOI = {10.1214/12-AOP749},
       URL = {https://doi.org/10.1214/12-AOP749},
}

@article {Tugaut2014,
    AUTHOR = {Tugaut, Julian},
     TITLE = {Phase transitions of {M}c{K}ean-{V}lasov processes in
              double-wells landscape},
   JOURNAL = {Stochastics},
  FJOURNAL = {Stochastics. An International Journal of Probability and
              Stochastic Processes},
    VOLUME = {86},
      YEAR = {2014},
    NUMBER = {2},
     PAGES = {257--284},
      ISSN = {1744-2508,1744-2516},
   MRCLASS = {60H10 (60G10 60J60 65C05 82C22)},
  MRNUMBER = {3180036},
MRREVIEWER = {Patr\'{\i}cia\ Gon\c{c}alves},
       DOI = {10.1080/17442508.2013.775287},
       URL = {https://doi.org/10.1080/17442508.2013.775287},
}

@article {Hiai2018,
    AUTHOR = {Hiai, Fumio and Lawson, Jimmie and Lim, Yongdo},
     TITLE = {The stochastic order of probability measures on ordered metric
              spaces},
   JOURNAL = {J. Math. Anal. Appl.},
  FJOURNAL = {Journal of Mathematical Analysis and Applications},
    VOLUME = {464},
      YEAR = {2018},
    NUMBER = {1},
     PAGES = {707--724},
      ISSN = {0022-247X,1096-0813},
   MRCLASS = {60E15 (28C15)},
  MRNUMBER = {3794112},
MRREVIEWER = {Hans\ Weber},
       DOI = {10.1016/j.jmaa.2018.04.038},
       URL = {https://doi.org/10.1016/j.jmaa.2018.04.038},
}

@article {Fritz2020,
    AUTHOR = {Fritz, Tobias and Perrone, Paolo},
     TITLE = {Stochastic order on metric spaces and the ordered
              {K}antorovich monad},
   JOURNAL = {Adv. Math.},
  FJOURNAL = {Advances in Mathematics},
    VOLUME = {366},
      YEAR = {2020},
     PAGES = {107081, 46},
      ISSN = {0001-8708,1090-2082},
   MRCLASS = {60B05 (18C15 28A33 54E35 54F05)},
  MRNUMBER = {4070306},
       DOI = {10.1016/j.aim.2020.107081},
       URL = {https://doi.org/10.1016/j.aim.2020.107081},
}

@article {Geib1994,
    AUTHOR = {Geib , Christel and Manthey, Ralf},
     TITLE = {Comparison theorems for stochastic differential equations in
              finite and infinite dimensions},
   JOURNAL = {Stochastic Process. Appl.},
  FJOURNAL = {Stochastic Processes and their Applications},
    VOLUME = {53},
      YEAR = {1994},
    NUMBER = {1},
     PAGES = {23--35},
      ISSN = {0304-4149,1879-209X},
   MRCLASS = {60H15 (60H10)},
  MRNUMBER = {1290705},
MRREVIEWER = {P.\ Kotelenez},
       DOI = {10.1016/0304-4149(94)90055-8},
       URL = {https://doi.org/10.1016/0304-4149(94)90055-8},
}

@article {Lindvall1999,
    AUTHOR = {Lindvall, Torgny},
     TITLE = {On {S}trassen's theorem on stochastic domination},
   JOURNAL = {Electron. Comm. Probab.},
  FJOURNAL = {Electronic Communications in Probability},
    VOLUME = {4},
      YEAR = {1999},
     PAGES = {51--59},
      ISSN = {1083-589X},
   MRCLASS = {60B05 (60E15 60J05)},
  MRNUMBER = {1711599},
MRREVIEWER = {George\ L.\ O'Brien},
       DOI = {10.1214/ECP.v4-1005},
       URL = {https://doi.org/10.1214/ECP.v4-1005},
}

@incollection {HS05,
    AUTHOR = {Hirsch, M. W. and Smith, Hal},
     TITLE = {Monotone dynamical systems},
 BOOKTITLE = {Handbook of differential equations: ordinary differential
              equations. {V}ol. {II}},
     PAGES = {239--357},
 PUBLISHER = {Elsevier B. V., Amsterdam},
      YEAR = {2005},
      ISBN = {0-444-52027-9},
   MRCLASS = {37B99 (34C12 34D09 34D20 34K20 37C65 37L99)},
  MRNUMBER = {2182759},
MRREVIEWER = {Sylvia\ Novo},
}

@incollection {N93,
    AUTHOR = {Nussbaum, Roger D.},
     TITLE = {The fixed point index and fixed point theorems},
 BOOKTITLE = {Topological methods for ordinary differential equations
              ({M}ontecatini {T}erme, 1991)},
    SERIES = {Lecture Notes in Math.},
    VOLUME = {1537},
     PAGES = {143--205},
 PUBLISHER = {Springer, Berlin},
      YEAR = {1993},
   MRCLASS = {58C30 (47H09 47H10 47H15 47N20)},
  MRNUMBER = {1226931},
MRREVIEWER = {Jacobo Pejsachowicz},
       DOI = {10.1007/BFb0085077},
       URL = {https://doi.org/10.1007/BFb0085077},
}

@article {N71,
    AUTHOR = {Nussbaum, Roger D.},
     TITLE = {The fixed point index for local condensing maps},
   JOURNAL = {Ann. Mat. Pura Appl. (4)},
  FJOURNAL = {Annali di Matematica Pura ed Applicata. Serie Quarta},
    VOLUME = {89},
      YEAR = {1971},
     PAGES = {217--258},
      ISSN = {0003-4622},
   MRCLASS = {47H10},
  MRNUMBER = {312341},
MRREVIEWER = {Heinrich Steinlein},
       DOI = {10.1007/BF02414948},
       URL = {https://doi.org/10.1007/BF02414948},
}

@article{HSW96,
title={Competitive exclusion and coexistence for competitive systems on ordered banach spaces},
author={S.-B. Hsu and H. Smith and P. Waltman},
journal={Trans. Amer. Math. Soc.},
year={1996},
volume={348},
number={},
pages={4083-4094}
}

@article{DH91,
title={Stability of fixed points for order-preserving discrete-time dynamical systems},
author={E. Dancer and P. Hess},
journal={J. Reine Angew. Math.},
year={1991},
volume={419},
number={},
pages={125-139}
}

@article{M84,
title={Existence of nontrivial unstable sets for  equilibriums of strongly order preserving systems},
author={Matano, Hiroshi},
journal={J. Fac. Sci. Univ. Tokyo},
year={1984},
volume={30},
number={},
pages={645-673}
}

@article{WFM95,
title={Heteroclinic orbits and convergence of orderpreserving set-condensing semiflows with applications to integrodifferential equations},
author={J. Wu and H. I. Freedman and R. K. Miller},
journal={J. Integral Equations Appl.},
year={1995},
volume={7},
number={},
pages={115-133}
}

@book{Z17,
  title      = {Dynamical Systems in Population Biology, second edition},
  author     = {X.-Q. Zhao},
  series     = {},
  volume     = {},
  publisher  = {Springer-Verlag},
  address    = {New York},
  year       = {2017},

}

@book{Ha88,
  title      = {Asymptotic Behavior of Dissipative Systems},
  author     = {J. K. Hale},
  series     = {Mathematical Surveys and Monographs},
  volume     = {25},
  publisher  = {American Mathematical Society},
  address    = {Providence, RI},
  year       = {1988},

}

@article {Zhang2023,
    AUTHOR = {Zhang, Shao-Qin},
     TITLE = {Existence and non-uniqueness of stationary distributions for
              distribution dependent {SDE}s},
   JOURNAL = {Electron. J. Probab.},
  FJOURNAL = {Electronic Journal of Probability},
    VOLUME = {28},
      YEAR = {2023},
     PAGES = {1-34},
   MRCLASS = {60H10 (60G10)},
  MRNUMBER = {4613856},
       DOI = {10.1214/23-ejp981},
       URL = {https://doi.org/10.1214/23-ejp981},
}

@book {Chue02,
    AUTHOR = {Chueshov, Igor},
     TITLE = {Monotone random systems theory and applications},
    SERIES = {Lecture Notes in Mathematics},
    VOLUME = {1779},
 PUBLISHER = {Springer-Verlag, Berlin},
      YEAR = {2002},
     PAGES = {viii+234},
      ISBN = {3-540-43246-9},
   MRCLASS = {37H10 (34F05 37C65 37G35 37L30 60H10 82C05)},
  MRNUMBER = {1902500},
MRREVIEWER = {Bj\"{o}rn\ Schmalfuss},
       DOI = {10.1007/b83277},
       URL = {https://doi.org/10.1007/b83277},
}

@article {Smith17,
    AUTHOR = {Smith, Hal L.},
     TITLE = {Monotone dynamical systems: reflections on new advances \&
              applications},
   JOURNAL = {Discrete Contin. Dyn. Syst.},
  FJOURNAL = {Discrete and Continuous Dynamical Systems. Series A},
    VOLUME = {37},
      YEAR = {2017},
    NUMBER = {1},
     PAGES = {485--504},
      ISSN = {1078-0947,1553-5231},
   MRCLASS = {34C12 (34-02 35K51)},
  MRNUMBER = {3583487},
MRREVIEWER = {Yves\ Dumont},
       DOI = {10.3934/dcds.2017020},
       URL = {https://doi.org/10.3934/dcds.2017020},
}

@book {Smith95,
    AUTHOR = {Smith, Hal L.},
     TITLE = {Monotone dynamical systems},
    SERIES = {Mathematical Surveys and Monographs},
    VOLUME = {41},
      NOTE = {An introduction to the theory of competitive and cooperative
              systems},
 PUBLISHER = {American Mathematical Society, Providence, RI},
      YEAR = {1995},
     PAGES = {x+174},
      ISBN = {0-8218-0393-X},
   MRCLASS = {34-02 (34C35 34Cxx 34Dxx 34Kxx 35K57 54H20 58Fxx)},
  MRNUMBER = {1319817},
MRREVIEWER = {Janusz\ Mierczy\'{n}ski},
}

@article {McKean1966,
    AUTHOR = {McKean, Jr., H. P.},
     TITLE = {A class of {M}arkov processes associated with nonlinear
              parabolic equations},
   JOURNAL = {Proc. Nat. Acad. Sci. U.S.A.},
  FJOURNAL = {Proceedings of the National Academy of Sciences of the United
              States of America},
    VOLUME = {56},
      YEAR = {1966},
     PAGES = {1907--1911},
      ISSN = {0027-8424},
   MRCLASS = {60.62},
  MRNUMBER = {221595},
MRREVIEWER = {F.\ B.\ Knight},
       DOI = {10.1073/pnas.56.6.1907},
       URL = {https://doi.org/10.1073/pnas.56.6.1907},
}

@article {Bao2022,
    AUTHOR = {Bao, Jianhai and Scheutzow, Michael and Yuan, Chenggui},
     TITLE = {Existence of invariant probability measures for functional
              {M}c{K}ean-{V}lasov {SDE}s},
   JOURNAL = {Electron. J. Probab.},
  FJOURNAL = {Electronic Journal of Probability},
    VOLUME = {27},
      YEAR = {2022},
     PAGES = {Paper No. 43, 14},
      ISSN = {1083-6489},
   MRCLASS = {60J60 (47D07 60H10)},
  MRNUMBER = {4404942},
       DOI = {10.1214/22-ejp773},
       URL = {https://doi.org/10.1214/22-ejp773},
}

@book {Hess91,
    AUTHOR = {Hess, Peter},
     TITLE = {Periodic-parabolic boundary value problems and positivity},
    SERIES = {Pitman Research Notes in Mathematics Series},
    VOLUME = {247},
 PUBLISHER = {Longman Scientific \& Technical, Harlow; copublished in the
              United States with John Wiley \& Sons, Inc., New York},
      YEAR = {1991},
     PAGES = {viii+139},
      ISBN = {0-582-06478-3},
   MRCLASS = {35-02 (35Bxx 35K57 47D06 47H20 47N20)},
  MRNUMBER = {1100011},
MRREVIEWER = {Paul G. Schmidt},
}

@article {H79,
    AUTHOR = {Matano, Hiroshi},
     TITLE = {Asymptotic behavior and stability of solutions of semilinear
              diffusion equations},
   JOURNAL = {Publ. Res. Inst. Math. Sci.},
  FJOURNAL = {Kyoto University. Research Institute for Mathematical
              Sciences. Publications},
    VOLUME = {15},
      YEAR = {1979},
    NUMBER = {2},
     PAGES = {401--454},
      ISSN = {0034-5318},
   MRCLASS = {35K60 (35B40 92A09)},
  MRNUMBER = {555661},
MRREVIEWER = {Jagdish Chandra},
       DOI = {10.2977/prims/1195188180},
       URL = {https://doi.org/10.2977/prims/1195188180},
}

@incollection {H86,
    AUTHOR = {Matano, Hiroshi},
     TITLE = {Strongly order-preserving local semidynamical systems---theory
              and applications},
 BOOKTITLE = {Semigroups, theory and applications, {V}ol. {I} ({T}rieste,
              1984)},
    SERIES = {Pitman Res. Notes Math. Ser.},
    VOLUME = {141},
     PAGES = {178--185},
 PUBLISHER = {Longman Sci. Tech., Harlow},
      YEAR = {1986},
   MRCLASS = {54H20},
  MRNUMBER = {876941},
MRREVIEWER = {L. Janos},
}

@article {Hirsch84,
    AUTHOR = {Hirsch, Morris W.},
     TITLE = {The dynamical systems approach to differential equations},
   JOURNAL = {Bull. Amer. Math. Soc. (N.S.)},
  FJOURNAL = {American Mathematical Society. Bulletin. New Series},
    VOLUME = {11},
      YEAR = {1984},
    NUMBER = {1},
     PAGES = {1--64},
      ISSN = {0273-0979},
   MRCLASS = {58Fxx (00A69 01A60 34C35 35B99)},
  MRNUMBER = {741723},
MRREVIEWER = {D. K. Arrowsmith},
       DOI = {10.1090/S0273-0979-1984-15236-4},
       URL = {https://doi.org/10.1090/S0273-0979-1984-15236-4},
}

@article {Hirsch88,
    AUTHOR = {Hirsch, Morris W.},
     TITLE = {Stability and convergence in strongly monotone dynamical
              systems},
   JOURNAL = {J. Reine Angew. Math.},
  FJOURNAL = {Journal f\"{u}r die Reine und Angewandte Mathematik. [Crelle's
              Journal]},
    VOLUME = {383},
      YEAR = {1988},
     PAGES = {1--53},
      ISSN = {0075-4102},
   MRCLASS = {58F25 (47H20)},
  MRNUMBER = {921986},
MRREVIEWER = {W. R. Utz},
       DOI = {10.1515/crll.1988.383.1},
       URL = {https://doi.org/10.1515/crll.1988.383.1},
}

@book {Carmona-Delarue2018i,
    AUTHOR = {Carmona, Ren\'{e} and Delarue, Fran\c{c}ois},
     TITLE = {Probabilistic theory of mean field games with applications.
              {I}},
    SERIES = {Probability Theory and Stochastic Modelling},
    VOLUME = {83},
      NOTE = {Mean field FBSDEs, control, and games},
 PUBLISHER = {Springer, Cham},
      YEAR = {2018},
     PAGES = {xxv+713},
      ISBN = {978-3-319-56437-1; 978-3-319-58920-6},
   MRCLASS = {60-02 (35R60 49N70 49N90 60H15 60H30 91A15 93E20)},
  MRNUMBER = {3752669},
MRREVIEWER = {Vassili N. Kolokol\cprime tsov},
}

@book {Carmona-Delarue2018ii,
    AUTHOR = {Carmona, Ren\'{e} and Delarue, Fran\c{c}ois},
     TITLE = {Probabilistic theory of mean field games with applications.
              {II}},
    SERIES = {Probability Theory and Stochastic Modelling},
    VOLUME = {84},
      NOTE = {Mean field games with common noise and master equations},
 PUBLISHER = {Springer, Cham},
      YEAR = {2018},
     PAGES = {xxiv+697},
      ISBN = {978-3-319-56435-7; 978-3-319-56436-4},
   MRCLASS = {60-02 (35R60 49L20 60G55 60H10 60H30 91A13 91A15)},
  MRNUMBER = {3753660},
MRREVIEWER = {Vassili N. Kolokol\cprime tsov},
}

@article {Bouchitté-Champion-Jimenez2005,
    AUTHOR = {Bouchitt\'{e}, G. and Champion, T. and Jimenez, C.},
     TITLE = {Completion of the space of measures in the {K}antorovich norm},
   JOURNAL = {Riv. Mat. Univ. Parma (7)},
  FJOURNAL = {Rivista di Matematica della Universit\`a di Parma. Serie 7},
    VOLUME = {4*},
      YEAR = {2005},
     PAGES = {127--139},
      ISSN = {0035-6298},
   MRCLASS = {49J45 (28A33 46E27)},
  MRNUMBER = {2197484},
MRREVIEWER = {Wilfrid\ Gangbo},
}

@article {Gess17,
    AUTHOR = {Flandoli, Franco and Gess, Benjamin and Scheutzow, Michael},
     TITLE = {Synchronization by noise for order-preserving random dynamical
              systems},
   JOURNAL = {Ann. Probab.},
  FJOURNAL = {The Annals of Probability},
    VOLUME = {45},
      YEAR = {2017},
    NUMBER = {2},
     PAGES = {1325--1350},
      ISSN = {0091-1798,2168-894X},
   MRCLASS = {37B25 (37G35 37H15 60H10)},
  MRNUMBER = {3630300},
MRREVIEWER = {Jos\'{e}\ A.\ Langa},
       DOI = {10.1214/16-AOP1088},
       URL = {https://doi.org/10.1214/16-AOP1088},
}

@book {Chow82,
    AUTHOR = {Chow, Shui Nee and Hale, Jack K.},
     TITLE = {Methods of bifurcation theory},
    SERIES = {Grundlehren der Mathematischen Wissenschaften},
    VOLUME = {251},
 PUBLISHER = {Springer-Verlag, New York-Berlin},
      YEAR = {1982},
     PAGES = {xv+515},
      ISBN = {0-387-90664-9},
   MRCLASS = {58E07 (34-02 58-02 58F14)},
  MRNUMBER = {660633},
MRREVIEWER = {Norman\ Dancer},
}

@article {MayLeonard75,
    AUTHOR = {May, Robert M. and Leonard, Warren J.},
     TITLE = {Nonlinear aspects of competition between three species},
   JOURNAL = {SIAM J. Appl. Math.},
  FJOURNAL = {SIAM Journal on Applied Mathematics},
    VOLUME = {29},
      YEAR = {1975},
    NUMBER = {2},
     PAGES = {243--253},
      ISSN = {0036-1399},
   MRCLASS = {92A15},
  MRNUMBER = {392035},
MRREVIEWER = {L.\ Billard},
       DOI = {10.1137/0129022},
       URL = {https://doi.org/10.1137/0129022},
}

@incollection {Balmforth95,
    AUTHOR = {Balmforth, N. J.},
     TITLE = {Solitary waves and homoclinic orbits},
 BOOKTITLE = {Annual review of fluid mechanics, {V}ol. 27},
     PAGES = {335--373},
 PUBLISHER = {Annual Reviews, Palo Alto, CA},
      YEAR = {1995},
      ISBN = {0-8243-0727-5},
   MRCLASS = {35Q35 (35B32 58F14 58F39 76B25 76E30)},
  MRNUMBER = {1312620},
MRREVIEWER = {J.\ S.\ Joel},
}

@article {Carrillo-McCann-Villani2003,
    AUTHOR = {Carrillo, Jos\'{e} A. and McCann, Robert J. and Villani,
              C\'{e}dric},
     TITLE = {Kinetic equilibration rates for granular media and related
              equations: entropy dissipation and mass transportation
              estimates},
   JOURNAL = {Rev. Mat. Iberoamericana},
  FJOURNAL = {Revista Matem\'{a}tica Iberoamericana},
    VOLUME = {19},
      YEAR = {2003},
    NUMBER = {3},
     PAGES = {971--1018},
      ISSN = {0213-2230},
   MRCLASS = {35K55 (35B40 35K65 76T25)},
  MRNUMBER = {2053570},
MRREVIEWER = {Thomas\ P.\ Witelski},
       DOI = {10.4171/RMI/376},
       URL = {https://doi.org/10.4171/RMI/376},
}

@article {Carrillo-McCann-Villani2006,
    AUTHOR = {Carrillo, Jos\'{e} A. and McCann, Robert J. and Villani,
              C\'{e}dric},
     TITLE = {Contractions in the 2-{W}asserstein length space and
              thermalization of granular media},
   JOURNAL = {Arch. Ration. Mech. Anal.},
  FJOURNAL = {Archive for Rational Mechanics and Analysis},
    VOLUME = {179},
      YEAR = {2006},
    NUMBER = {2},
     PAGES = {217--263},
      ISSN = {0003-9527,1432-0673},
   MRCLASS = {76M30 (35K57 58E50 76T25 82C40)},
  MRNUMBER = {2209130},
MRREVIEWER = {Beno\^{i}t\ P.\ Desjardins},
       DOI = {10.1007/s00205-005-0386-1},
       URL = {https://doi.org/10.1007/s00205-005-0386-1},
}

@article {Tamura1984,
    AUTHOR = {Tamura, Yozo},
     TITLE = {On asymptotic behaviors of the solution of a nonlinear
              diffusion equation},
   JOURNAL = {J. Fac. Sci. Univ. Tokyo Sect. IA Math.},
  FJOURNAL = {Journal of the Faculty of Science. University of Tokyo.
              Section IA. Mathematics},
    VOLUME = {31},
      YEAR = {1984},
    NUMBER = {1},
     PAGES = {195--221},
      ISSN = {0040-8980},
   MRCLASS = {60K35 (58G32 60J60)},
  MRNUMBER = {743525},
MRREVIEWER = {S.\ R. S. Varadhan},
}

@article {Tamura1987,
    AUTHOR = {Tamura, Yozo},
     TITLE = {Free energy and the convergence of distributions of diffusion
              processes of {M}c{K}ean type},
   JOURNAL = {J. Fac. Sci. Univ. Tokyo Sect. IA Math.},
  FJOURNAL = {Journal of the Faculty of Science. University of Tokyo.
              Section IA. Mathematics},
    VOLUME = {34},
      YEAR = {1987},
    NUMBER = {2},
     PAGES = {443--484},
      ISSN = {0040-8980},
   MRCLASS = {60J60 (35R60 60H10)},
  MRNUMBER = {914029},
MRREVIEWER = {Ralf\ Manthey},
}

@article {Dawson1983,
    AUTHOR = {Dawson, Donald A.},
     TITLE = {Critical dynamics and fluctuations for a mean-field model of
              cooperative behavior},
   JOURNAL = {J. Statist. Phys.},
  FJOURNAL = {Journal of Statistical Physics},
    VOLUME = {31},
      YEAR = {1983},
    NUMBER = {1},
     PAGES = {29--85},
      ISSN = {0022-4715,1572-9613},
   MRCLASS = {82A25 (60K35)},
  MRNUMBER = {711469},
MRREVIEWER = {Ronald\ F.\ Fox},
       DOI = {10.1007/BF01010922},
       URL = {https://doi.org/10.1007/BF01010922},
}

@article{Cormier2024,
author = {Quentin Cormier},
title = {{On the stability of the invariant probability measures of McKean–Vlasov equations}},
volume = {61},
journal = {Annales de l'Institut Henri Poincaré, Probabilités et Statistiques},
number = {4},
publisher = {Institut Henri Poincaré},
pages = {2405 -- 2429},
keywords = {Lions derivative, long-time behavior, McKean–Vlasov SDE, Mean-field interaction},
year = {2025},
doi = {10.1214/24-AIHP1504},
URL = {https://doi.org/10.1214/24-AIHP1504}
}

@article{Monmarche-Reygner2024,
	author = {Monmarch{\'e}, Pierre and Reygner, Julien},
	date = {2025/07/31},
	date-added = {2025-12-15 20:28:46 +0800},
	date-modified = {2025-12-15 20:28:46 +0800},
	doi = {10.1007/s00440-025-01399-0},
	id = {Monmarch{\'e}2025},
	isbn = {1432-2064},
	journal = {Probability Theory and Related Fields},
	title = {Local convergence rates for Wasserstein gradient flows and McKean-Vlasov equations with multiple stationary solutions},
	url = {https://doi.org/10.1007/s00440-025-01399-0},
	year = {2025},
	bdsk-url-1 = {https://doi.org/10.1007/s00440-025-01399-0}
}

@article {Huang-Liu-Wang2018,
    AUTHOR = {Huang, Xing and Liu, Chang and Wang, Feng-Yu},
     TITLE = {Order preservation for path-distribution dependent {SDE}s},
   JOURNAL = {Commun. Pure Appl. Anal.},
  FJOURNAL = {Communications on Pure and Applied Analysis},
    VOLUME = {17},
      YEAR = {2018},
    NUMBER = {5},
     PAGES = {2125--2133},
      ISSN = {1534-0392},
   MRCLASS = {60H10},
  MRNUMBER = {3809144},
MRREVIEWER = {Jun Shen},
       DOI = {10.3934/cpaa.2018100},
       URL = {https://doi.org/10.3934/cpaa.2018100},
}

@article {Meyn-Tweedie1993,
    AUTHOR = {Meyn, Sean P. and Tweedie, R. L.},
     TITLE = {Stability of {M}arkovian processes. {III}. {F}oster-{L}yapunov
              criteria for continuous-time processes},
   JOURNAL = {Adv. in Appl. Probab.},
  FJOURNAL = {Advances in Applied Probability},
    VOLUME = {25},
      YEAR = {1993},
    NUMBER = {3},
     PAGES = {518--548},
      ISSN = {0001-8678},
   MRCLASS = {60J27},
  MRNUMBER = {1234295},
MRREVIEWER = {Esa Nummelin},
       DOI = {10.2307/1427522},
       URL = {https://doi.org/10.2307/1427522},
}

@book {Meyn-Tweedie1993-book,
    AUTHOR = {Meyn, S. P. and Tweedie, R. L.},
     TITLE = {Markov chains and stochastic stability},
    SERIES = {Communications and Control Engineering Series},
 PUBLISHER = {Springer-Verlag London, Ltd., London},
      YEAR = {1993},
     PAGES = {xvi+ 548},
      ISBN = {3-540-19832-6},
   MRCLASS = {60J05},
  MRNUMBER = {1287609},
MRREVIEWER = {Esa Nummelin},
       DOI = {10.1007/978-1-4471-3267-7},
       URL = {https://doi.org/10.1007/978-1-4471-3267-7},
}

@book {Lindvall1992,
    AUTHOR = {Lindvall, Torgny},
     TITLE = {Lectures on the coupling method},
    SERIES = {Wiley Series in Probability and Mathematical Statistics:
              Probability and Mathematical Statistics},
      NOTE = {A Wiley-Interscience Publication},
 PUBLISHER = {John Wiley \& Sons, Inc., New York},
      YEAR = {1992},
     PAGES = {xiv+257},
      ISBN = {0-471-54025-0},
   MRCLASS = {60-01 (60J10 60K05 60K35)},
  MRNUMBER = {1180522},
MRREVIEWER = {Uwe R\"{o}sler},
}

@article{Tugaut2023,
author = {Julian Tugaut},
title = {On the steady states for the granular media equation: existence, local uniqueness, local stability and rate of convergence},
journal = {Stochastics},
volume = {0},
number = {0},
pages = {1--18},
year = {2025},
publisher = {Taylor \& Francis},
}

@article {ShenYi98,
    AUTHOR = {Shen, Wenxian and Yi, Yingfei},
     TITLE = {Almost automorphic and almost periodic dynamics in
              skew-product semiflows},
   JOURNAL = {Mem. Amer. Math. Soc.},
  FJOURNAL = {Memoirs of the American Mathematical Society},
    VOLUME = {136},
      YEAR = {1998},
    NUMBER = {647},
     PAGES = {x+93},
      ISSN = {0065-9266,1947-6221},
   MRCLASS = {34D05 (34C27 34C35 35K55 47H20 54H20 58F11)},
  MRNUMBER = {1445493},
MRREVIEWER = {Russell\ A.\ Johnson},
       DOI = {10.1090/memo/0647},
       URL = {https://doi.org/10.1090/memo/0647},
}

@article {ChueshovScheutzow2004,
    AUTHOR = {Chueshov, Igor and Scheutzow, Michael},
     TITLE = {On the structure of attractors and invariant measures for a
              class of monotone random systems},
   JOURNAL = {Dyn. Syst.},
  FJOURNAL = {Dynamical Systems. An International Journal},
    VOLUME = {19},
      YEAR = {2004},
    NUMBER = {2},
     PAGES = {127--144},
      ISSN = {1468-9367,1468-9375},
   MRCLASS = {37H10 (34D45 34F05 37C65 37C70 82C05)},
  MRNUMBER = {2060422},
MRREVIEWER = {Bj\"{o}rn\ Schmalfuss},
       DOI = {10.1080/1468936042000207792},
       URL = {https://doi.org/10.1080/1468936042000207792},
}

@article {Butkovsky2020,
    AUTHOR = {Butkovsky, Oleg and Scheutzow, Michael},
     TITLE = {Couplings via comparison principle and exponential ergodicity
              of {SPDE}s in the hypoelliptic setting},
   JOURNAL = {Comm. Math. Phys.},
  FJOURNAL = {Communications in Mathematical Physics},
    VOLUME = {379},
      YEAR = {2020},
    NUMBER = {3},
     PAGES = {1001--1034},
      ISSN = {0010-3616,1432-0916},
   MRCLASS = {60J35 (47D07 60H15)},
  MRNUMBER = {4163359},
MRREVIEWER = {Sonja\ Cox},
       DOI = {10.1007/s00220-020-03834-w},
       URL = {https://doi.org/10.1007/s00220-020-03834-w},
}

@article {RobertTweedie2000,
    AUTHOR = {Roberts, G. O. and Tweedie, R. L.},
     TITLE = {Rates of convergence of stochastically monotone and continuous
              time {M}arkov models},
   JOURNAL = {J. Appl. Probab.},
  FJOURNAL = {Journal of Applied Probability},
    VOLUME = {37},
      YEAR = {2000},
    NUMBER = {2},
     PAGES = {359--373},
      ISSN = {0021-9002,1475-6072},
   MRCLASS = {60J05 (60F05 60J25 60K30)},
  MRNUMBER = {1780996},
MRREVIEWER = {Jean\ Diebolt},
       DOI = {10.1239/jap/1014842542},
       URL = {https://doi.org/10.1239/jap/1014842542},
}

@article {RobertRichard1996,
    AUTHOR = {Lund, Robert B. and Tweedie, Richard L.},
     TITLE = {Geometric convergence rates for stochastically ordered
              {M}arkov chains},
   JOURNAL = {Math. Oper. Res.},
  FJOURNAL = {Mathematics of Operations Research},
    VOLUME = {21},
      YEAR = {1996},
    NUMBER = {1},
     PAGES = {182--194},
      ISSN = {0364-765X,1526-5471},
   MRCLASS = {60J05 (60K25)},
  MRNUMBER = {1385873},
MRREVIEWER = {Hermann\ Thorisson},
       DOI = {10.1287/moor.21.1.182},
       URL = {https://doi.org/10.1287/moor.21.1.182},
}

@article {Kamae1977,
    AUTHOR = {Kamae, T. and Krengel, U. and O'Brien, G. L.},
     TITLE = {Stochastic inequalities on partially ordered spaces},
   JOURNAL = {Ann. Probab.},
  FJOURNAL = {The Annals of Probability},
    VOLUME = {5},
      YEAR = {1977},
    NUMBER = {6},
     PAGES = {899--912},
      ISSN = {0091-1798},
   MRCLASS = {60G10},
  MRNUMBER = {494447},
MRREVIEWER = {R.\ M.\ Dudley},
       DOI = {10.1214/aop/1176995659},
       URL = {https://doi.org/10.1214/aop/1176995659},
}

@article{Zhang2025,
  title={Local convergence near equilibria for distribution dependent SDEs},
  author={Zhang, Shao-Qin},
  journal={arXiv preprint arXiv:2501.04313},
  year={2025}
}

@incollection {Aoki1980,
    AUTHOR = {Aoki, Masanao},
     TITLE = {Dynamics and control of a system composed of a large number of
              similar subsystems},
 BOOKTITLE = {Dynamic optimization and mathematical economics},
    SERIES = {Math. Concepts Methods Sci. Engrg.},
    VOLUME = {19},
     PAGES = {183--203},
 PUBLISHER = {Plenum, New York},
      YEAR = {1980},
   MRCLASS = {93A15},
  MRNUMBER = {567121},
}

@book {Haken77,
    AUTHOR = {Haken, Hermann},
     TITLE = {Synergetics---an introduction},
      NOTE = {Nonequilibrium phase transitions and self-organization in
              physics, chemistry and biology},
 PUBLISHER = {Springer-Verlag, Berlin-New York},
      YEAR = {1977},
     PAGES = {xii+325},
      ISBN = {3-540-07885-1},
   MRCLASS = {82.60 (92A05)},
  MRNUMBER = {471840},
MRREVIEWER = {V.\ K.\ Wong},
}

@article {Horsthemke77,
    AUTHOR = {Horsthemke, W. and Malek-Mansour, M. and Hayez, B.},
     TITLE = {An asymptotic expansion of the nonlinear master equation},
   JOURNAL = {J. Statist. Phys.},
  FJOURNAL = {Journal of Statistical Physics},
    VOLUME = {16},
      YEAR = {1977},
    NUMBER = {2},
     PAGES = {201--215},
      ISSN = {0022-4715,1572-9613},
   MRCLASS = {82.35},
  MRNUMBER = {449370},
MRREVIEWER = {Mayer\ Humi},
       DOI = {10.1007/BF01418752},
       URL = {https://doi.org/10.1007/BF01418752},
}

@article {Kometani75,
    AUTHOR = {Kometani, K. and Shimizu, H.},
     TITLE = {A study of self-organizing processes of nonlinear stochastic
              variables},
   JOURNAL = {J. Statist. Phys.},
  FJOURNAL = {Journal of Statistical Physics},
    VOLUME = {13},
      YEAR = {1975},
    NUMBER = {6},
     PAGES = {473--490},
      ISSN = {0022-4715,1572-9613},
   MRCLASS = {92A05},
  MRNUMBER = {449768},
MRREVIEWER = {K.\ Dietz},
       DOI = {10.1007/BF01013146},
       URL = {https://doi.org/10.1007/BF01013146},
}

@article {DuongHongTugaut20,
    AUTHOR = {Duong, Manh Hong and Tugaut, Julian},
     TITLE = {Coupled {M}c{K}ean-{V}lasov diffusions: wellposedness,
              propagation of chaos and invariant measures},
   JOURNAL = {Stochastics},
  FJOURNAL = {Stochastics. An International Journal of Probability and
              Stochastic Processes},
    VOLUME = {92},
      YEAR = {2020},
    NUMBER = {6},
     PAGES = {900--943},
      ISSN = {1744-2508,1744-2516},
   MRCLASS = {60H10 (35B30 35K55 35R60 60G10 60J60)},
  MRNUMBER = {4139089},
       DOI = {10.1080/17442508.2019.1677663},
       URL = {https://doi.org/10.1080/17442508.2019.1677663},
}

\end{document}